\newcommand{\R}{{\mathbb R}}
\newcommand{\N}{{\mathbb N}}
\newcommand{\Z}{{\mathbb Z}}
\newcommand{\E}{{\mathrm E\,}}
\newcommand{\Path}{{\mathcal P}}
\newcommand{\B}{{\mathcal B}}
\newcommand{\Alexey}[1]{}
\newcommand{\Fred}[1]{}
\newcommand{\glo}{\alpha}
\newcommand{\maxd}{\beta}
\newcommand{\aon}{\alpha_1}
\newcommand{\atw}{\alpha_2}
\newcommand{\ath}{\alpha_3}
\newcommand{\bad}{\gamma}
\newcommand{\nbad}{\delta}
\newcommand{\roots}{r}
\newcommand{\degree }{D }
\newcommand{\aio}{\chi}
\newtheorem{Claim}{Claim}[section]
\newtheorem{Theorem}[Claim]{Theorem}
\newtheorem{Lemma}[Claim]{Lemma}
\newtheorem{Observation}[Claim]{Observation}
\newtheorem{prop}[Claim]{Proposition}
\newtheorem*{rmk}{Remark}
\begin{document}

	\title{Long directed rainbow cycles and rainbow spanning trees}

\author{Frederik Benzing\thanks{Institute of Theoretical Computer Science, ETH, 8092 Zurich, 
		Switzerland. {\tt benzingf@ethz.ch}. }
\and
Alexey Pokrovskiy\thanks{Department of Mathematics, ETH, 8092 Zurich, 
Switzerland. {\tt dr.alexey.pokrovskiy@gmail.com}. 
Research supported in part by SNSF grant 200021-175573.}
\and
Benny Sudakov
\thanks{Department of Mathematics, ETH, 8092 Zurich, Switzerland. 
{\tt benjamin.sudakov@math.ethz.ch}. 
Research supported in part by SNSF grant 200021-175573.}
}
\maketitle
	\abstract{	A subgraph of an edge-coloured graph is called rainbow if all its
edges have different colours.
The problem of finding rainbow subgraphs goes back to the work of
Euler on transversals in Latin squares and was extensively studied
since then.
In this paper we consider two related questions concerning rainbow
subgraphs of complete, edge-coloured graphs and digraphs.
In the first part, we show that every properly edge-coloured  complete
directed graph contains a directed rainbow cycle of length
$n-O(n^{4/5})$. This is motivated by an old problem of Hahn and improves a result of
Gyarfas and Sarkozy.
In the second part, we show that any tree $T$ on $n$ vertices with
maximum degree $\Delta_T\leq \maxd n/\log n$
has a rainbow embedding into a properly edge-coloured  $K_n$ provided
that every colour appears at most  $\glo n$ times
and  $\glo, \maxd$ are sufficiently small constants.}
	
	\section{Introduction}
	In this paper we study rainbow subgraphs of properly edge-coloured complete graphs and digraphs. An edge-colouring of an undirected graph is \textit{proper} if no two edges sharing a vertex have the same colour. In the directed setting, no pair of edges with a common start point and  no pair of edges with a common end point may be monochromatic. In both cases a subgraph of the complete graph is \textit{rainbow} if all its edges have distinct colours.
	We define the complete directed graph on $n$ vertices, denoted $\overleftrightarrow{K_n}$, to be the graph on $n$ vertices with an edge going in both directions between any two distinct vertices.

	The search for rainbow structures can be traced back to the 18th century, when Euler initiated the study of transversals in Latin squares. In the meantime, a multitude of conjectures and results in this field has been published, see e.g. \cite[Chapter 9]{wanless} for a survey. Let us first define the notions involved and give their natural reformulation in terms of rainbow subgraphs.	A \textit{Latin square of order $n$} is an $n\times n$ array filled with symbols, so that each symbol appears precisely once in each row and column. A \textit{partial transversal of length $k$} is a collection of $k$ cells of the Latin square, so that no two cells share their row, column or symbol. 
	To every $n\times n$ Latin square, one can assign a proper colouring of $\overleftrightarrow{K_n}$ with a loop added at each vertex as follows---label the vertices of $\overleftrightarrow{K_n}$ by the numbers $1, \dots, n$ and colour each directed edge $ij$ by the symbol in the cell $(i,j)$. 
	Identifying the cell $(i,j)$ with the edge $ij$, a partial transversal now corresponds to a rainbow subgraph of $\overleftrightarrow{K_n}$ in which each vertex has in- and out-degree at most 1. 
	
	A long-standing conjecture attributed (in slightly different versions) to Ryser \cite{Ryser}, Brualdi \cite{Brualdi}, and Stein \cite{Stein} asks whether every Latin square of order $n$ contains a partial transversal of length $n-1$. The best known approximate version of this, due to Hatami and Shor \cite{Shor},  asserts that there always is a partial transversal of length $n-O\left(\log^2 n\right)$, improving several earlier results.	
	 A partial transversal is called \textit{cycle-free} if the corresponding subgraph of $\overleftrightarrow{K_n}$ is cycle-free. Since partial transversals correspond to rainbow subgraphs of $\overleftrightarrow{K_n}$ with maximum in- and out-degree 1, cycle-free partial transversals correspond to rainbow subgraphs of $\overleftrightarrow{K_n}$ which are unions of vertex-disjoint directed paths.
	  We shall call such subgraphs \textit{path forests}. Gy\'arf\'as and S\'ark\"ozy \cite{gyarsar} conjectured that every Latin square contains a partial, cycle-free transversal of length $n-2$ and showed that this would be best possible for $n=4$. They proved that every Latin square contains a partial, cycle-free transversal of size $n-O(n\log\log n/\log n)$. One of our main results improves upon and generalises this result.
	\begin{Theorem}\label{TLRC}
		Every properly edge-coloured $\overleftrightarrow{K_n}$ contains a rainbow path forest of length $n-O(n^{2/3})$ and a rainbow cycle of length $n-O(n^{4/5})$. 
	\end{Theorem}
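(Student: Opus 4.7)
The plan is to prove the two assertions in sequence, treating the path-forest bound first and then using it to build the long rainbow cycle.

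For the path forest, take a maximum rainbow path forest $P$ and let $k = n - |P|$ be its number of connected components (counting isolated vertices as length-$0$ paths), so that $|S| = |T| = k$, where $S$ and $T$ denote the sets of start and end vertices of the paths of $P$. Assume for contradiction that $k$ is much larger than $n^{2/3}$. Maximality forces every directed edge from $T$ to $S$ joining two distinct components to use one of the $n - k$ colours already present in $P$; otherwise we could directly concatenate two paths and enlarge $P$. There are $\sim k^{2}$ such $T \to S$ edges but only $n - k$ colours available, and each such colour can contribute at most $\min(|T|,|S|) = k$ edges (its colour class is a partial directed matching in $\overleftrightarrow{K_n}$), so a naive double count does not yield a contradiction. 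I would refine it with a switching operation: for a $P$-edge $f = uv$ of colour $c$ and an alternate $c$-edge $e = ts$ with $t \in T$, $s \in S$, the set $P' = P - f + e$ is still rainbow and of the same size; when it is also a path forest (i.e.\ $e$ does not close a directed cycle with $P - f$), it replaces $s \in S$ by $v$ and $t \in T$ by $u$, exposing fresh endpoint pairs. Summing over switchings and counting how often some switched forest admits an unused-colour extension should yield an inequality of the form $k^{3} = O(n^{2})$, giving $k = O(n^{2/3})$.

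For the rainbow cycle, first apply the path-forest bound to produce a rainbow path forest of length at least $n - Cn^{2/3}$ with components $P_{1}, \dots, P_{k}$ for some $k = O(n^{2/3})$. It remains to glue these paths cyclically using fresh colours. My plan is to reserve in advance a small \emph{connection pool} $R$ of about $n^{4/5}$ vertices, apply the path-forest argument on $V \setminus R$ to produce $P_{1}, \dots, P_{k}$ there, and then use $R$ both as a supply of intermediate vertices and as a supply of unused colours to build $k$ short rainbow connectors joining the end of $P_{i}$ to the start of $P_{i+1}$ in a chosen cyclic order. Since $|R| = n^{4/5} \gg k$, each successive greedy or Hall-type choice of a single-edge or short-detour connector through $R$ succeeds; discarding any pool vertices not used as connector interior then leaves a rainbow directed cycle of length at least $n - O(n^{4/5})$.

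The hardest step will be the switching analysis for the path forest. A single switch can turn a path forest into a forbidden directed cycle, so we must simultaneously control (i) which $P$-edges admit usable alternate $c$-coloured edges sitting in the desired $T \to S$ region, and (ii) the structural side-effects that can merge paths into a directed cycle rather than into an extension. Balancing these two constraints against the partial-matching structure of each colour class is what drives the exponent $2/3$; the larger slack $n^{4/5}$ in the cycle bound is there precisely to absorb the additional cost of realising $k$ vertex-disjoint connectors in a prescribed cyclic order.
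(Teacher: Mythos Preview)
Your path-forest sketch points in the right direction but is incomplete. You correctly note that the naive double count $k(k-1)\leq (n-k)\cdot k$ is vacuous, and then assert that ``summing over switchings'' will yield $k^3=O(n^2)$ without saying how. A single switch $P\mapsto P-f+e$ replaces only one vertex of $S$ and one of $T$, so it is not at all clear how to accumulate enough independent constraints to force that inequality; you have identified where the difficulty lies but not resolved it. The paper's argument (Lemma~\ref{long}) is considerably more structured: it partitions the start-set $f(\Path)$ into blocks $Q_0,\dots,Q_s$ of size $1/\delta$, builds an increasing chain of colour sets $C_0\subset C_1\subset\cdots$ where $C_i$ adjoins the colours $c(v)$ of all $v$ that send at least two $C_{i-1}$-coloured edges into $Q_{i-1}$, and proves $|C_i|\geq |C_{i-1}|+\delta n$ via a multi-step edge-exchange algorithm (Claim~\ref{algoclaim}) that chains several of your ``switches'' together. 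The resulting condition $\delta^2\gamma n>1$ is what pins down the $n^{2/3}$ exponent.

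The cycle argument has a more concrete gap. Reserving a pool $R$ of $n^{4/5}$ \emph{vertices} does not reserve any \emph{colours}: in a $1$-factorisation of $\overleftrightarrow{K_n}$ every colour is present at every vertex, so the edges from your path endpoints into $R$ may all carry colours already used by the path forest, and $R$ is not a ``supply of unused colours'' in any sense. The paper instead reserves colours directly: each colour class is selected independently with probability $p=6n^{-1/5}$ to form a subgraph $H$ (Theorem~\ref{rand}), the path forest is built inside $\overleftrightarrow{K_n}\setminus H$, and then $H$---whose colours are disjoint from those of the path forest by construction---is used as an expander. Even with such an expander in hand, greedily linking $t_i$ to $s_{i+1}$ through a pool vertex is not what is done; the paper applies a rotation--extension lemma (Lemma~\ref{glue}) that repeatedly lengthens one designated path $P_1$ by at least $m$ using up to three $H$-edges per step, and only closes the cycle once $P_1$ is nearly spanning.
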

 Note that this result is more general than that of Gy\'arf\'as and S\'ark\"ozy \cite{gyarsar}. On the one hand it shows the existence of long cycles rather than just path forests, on the other hand it only assumes $\overleftrightarrow{K_n}$ to be properly coloured, rather than each colour-class being a 1-factor (which is the case for colourings of $\overleftrightarrow{K_n}$ coming from Latin squares). 
	
	Theorem \ref{TLRC} is also interesting since it generalises a recent result by Alon and the second two authors \cite{longcycle}. They proved that every properly edge-coloured $K_n$ contains a rainbow cycle of length $n-O(n^{3/4})$ and our proof of Theorem \ref{TLRC} is based on their key ideas. 
	Both these results raise the question of how far the error terms, $O(n^{4/5})$ and $O(n^{3/4})$ respectively, can be pushed.  
	In the undirected case, Balogh and Molla~\cite{BaloghMolla} reduced the  $O(n^{3/4})$ error term to $O(n^{1/2}\log n)$.
	Hahn \cite{Hahn} conjectured that every properly coloured complete graph contains a rainbow Hamilton path. However, Maamoun and Meyniel \cite{m&m} constructed an edge-colouring of $K_n$, for $n=2^k$, which does not contain a rainbow Hamilton path, disproving Hahn's conjecture. Their construction generalises easily to the directed case: Given a colouring $K_n$ which does not contain a Hamilton path, replace each undirected edge $\{u,v\}$ of colour $f$ by two directed edges $(u,v),(v,u)$ giving both of them colour $f$. Then this colouring of $\overleftrightarrow{K_n}$ does not contain a directed (or, in fact, undirected) Hamilton path.
		
	Beyond Latin squares, rainbow structures play an important role in Ramsey Theory, for they are one of the structures guaranteed by the canonical theorem of Ramsey Theory, proved by Erd\H{o}s and Rado \cite{canRam}. In this setting it is a natural question under which conditions a given graph $G$ has a rainbow embedding into an edge-coloured $K_n$. If such an embedding exists, we call the colouring of $K_n$ $G$-\textit{rainbow}. We also say that an edge-colouring is \textit{locally $k$-bounded} if every colour appears at most $k$ times at each vertex. Analogously, the colouring is \textit{globally $k$-bounded} if every colour appears at most $k$ times in total.
	
	In 1976 Bollob\'as and Erd\H{o}s \cite{boerd} raised the question which edge-colourings contain a properly coloured Hamilton cycle and proved that locally $\alpha n$-bounded colourings do, for $\alpha =1/69$. They conjectured that their statement holds for any $\alpha <1/2$. After several improvements, Lo \cite{LoHam} proved an asymptotic version of this conjecture. 
	
	Since then different generalisations have been made, asking for rainbow rather than properly coloured graphs. E.g.,  Hahn and Thomassen \cite{HaThrain}  formulated an analogue to the Bollob\'as-Erd\H{o}s conjecture in 1986, suggesting that there exists an $\alpha$ so that any globally $\alpha n$-bounded colouring contains a rainbow Hamilton cycle. They proved that any globally $k$-bounded colouring contains a rainbow Hamilton cycle for $k=\alpha n^{1/3}$. Other sublinear values for $k$ were proved by by Erd\H{o}s, Ne\v{s}et\v{r}il, and R\"odl \cite{neset}, and by Frieze and Reed \cite{FrieR}. Albert, Frieze and Reed \cite{AlFrieReed} finally showed that any globally $n/64$-bounded colouring contains a rainbow Hamilton cycle. 
	
	Frieze and Krivelevich posed the question whether there is an $\alpha=\alpha(\Delta)$ so that any globally $\alpha n$-bounded colouring contains a copy of every spanning tree with maximum degree $\Delta$. B\"ottcher, Kohayakawa and Procacci \cite{globoundedrain} showed a stronger result, namely that any globally $\frac{n}{51\Delta^2}$-bounded colouring contains a rainbow copy of any graph (and not only tree) with maximum degree at most $\Delta$. The third author and Volec \cite{VolSud} showed that the $\Delta$-dependence in this theorem is best possible. More precisely, they constructed a locally 3-bounded and globally 9-bounded colouring of $K_{3n}$ which is not $T$-rainbow for any tree $T$ of radius 2. This especially holds for radius-2-trees with maximum degree $O\left(\sqrt{n}\right)$ showing that the $\Delta$-dependence is indeed optimal.  See \cite{CoulsonPerarnau, VolSudKam} for additional work in this area when the host graph $K_n$ is replaced by a different graph.
	
	In light of the construction from~\cite{VolSud}, we consider the question whether properly coloured (i.e. locally 1-bounded) colourings behave differently than locally $3$-bounded colourings. More precisely, we ask under which conditions a proper colouring of $K_n$ is $T$-rainbow for a given spanning tree $T$. The intuition that we might be able to find a rainbow embedding of any spanning tree into any properly coloured $K_n$ is false. Recall the colouring from Maamoun and Meyniel \cite{m&m} of $K_n$, for $n=2^k$, which does not contain a Hamilton path. We also observe that the colouring they present does not allow for a rainbow embedding of any spanning tree in which all but precisely 2 vertices have odd degree, giving a much wider class of counterexamples. The details of this and other colourings of $K_n$ which do not contain rainbow copies of certain spanning trees are presented in Section \ref{conc}.
	
	The approach of B\"ottcher, Kohayakawa and Procacci \cite{globoundedrain}, which uses a framework for the Local Lemma developed by Lu and Sz\'ekely \cite{LS}, can be modified straightforwardly to show that any globally $\frac{n}{C\Delta}$-bounded (where $C$ is a sufficiently large constant), proper colouring of $K_n$ contains a rainbow copy of every tree with maximum degree $\Delta$. Combining this method with some additional ideas, we show that the condition of being properly coloured gives much stronger results, distinguishing them from locally 3-bounded colourings, for example.
	\begin{Theorem}\label{TRT}
		There are constants $\glo,\maxd>0$ so that the following holds for every integer $n$.
		Let $T$ be a tree on $n$ vertices with maximum degree at most $\maxd n/\log n$ and let $c$ be a proper colouring of $K_n$ which is globally $\glo n$-bounded. Then $c$ is $T$-rainbow.	
	\end{Theorem}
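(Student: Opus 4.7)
The plan is to embed $T$ in two phases: first embed a suitably pruned subtree $T' \subseteq T$ via a random injection analysed through a Lov\'asz Local Lemma argument in the style of Lu--Sz\'ekely~\cite{LS}, and then extend the partial embedding to the remaining vertices using a Hall-type matching argument that exploits both the properness and the global $\glo n$-boundedness of $c$. As a preprocessing step, I would identify a set $L \subseteq V(T)$ of $\Theta(n)$ leaves (or, more generally, ``sparse'' boundary vertices) whose removal yields a subtree $T' = T-L$ in which $|L|$ is much larger than the remaining maximum degree $\Delta(T')$. Trees with $\Delta(T) \le \maxd n/\log n$ admit such a decomposition easily: strip leaves greedily until a constant fraction of $V(T)$ has been removed.

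For the embedding phase I would let $\phi : V(T') \hookrightarrow V(K_n)$ be a uniformly random injection and introduce two kinds of bad events. First, for each unordered pair of edges $e_1, e_2 \in E(T')$, a ``colour collision'' event $A_{e_1, e_2}$ asserting that $\phi(e_1)$ and $\phi(e_2)$ receive the same colour; global boundedness together with properness gives $\Pro[A_{e_1, e_2}] = O(\glo/n)$, with dependency degree $O(\Delta(T')^2 n)$. Second, a family of Chernoff-type quasi-randomness events asserting that, for each leftover vertex $v$ and each colour $f$, the number of $f$-edges at $v$ consumed or blocked by $\phi$ is concentrated around its mean. Applying the Lu--Sz\'ekely refinement of the local lemma for random injections---which exploits negative correlation---then yields a rainbow quasi-random partial embedding with positive probability. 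The $\log n$ factor in the hypothesis $\Delta(T) \le \maxd n/\log n$ enters here through the union-bound and concentration analysis required by the second family of events.

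To finish, I would extend $\phi$ to $L$. Each $\ell \in L$ has a parent $p(\ell) \in V(T')$ already placed by $\phi$, so embedding $\ell$ amounts to choosing an unused host $v$ for which $c(\phi(p(\ell))v)$ has not yet appeared among the colours used by $\phi$ or by the embedding of other leaves. Modelling this as a bipartite graph between $L$ and the unused vertices of $K_n$ reduces the task to finding a perfect matching. The main obstacle---and the heart of the argument---is verifying Hall's condition for every subset $S \subseteq L$: small subsets must be controlled via the properness of $c$ (each used colour blocks at most one candidate per parent), while large subsets require the global $\glo n$-boundedness together with the quasi-randomness guaranteed in the previous step. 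Balancing these two regimes, and choosing $\glo$ and $\maxd$ so that the LLL output feeds exactly the inequalities demanded by Hall's condition, is the technical crux of the proof.
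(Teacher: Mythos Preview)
Your two-phase outline has a genuine gap in the first (LLL) step. Stripping $\Theta(n)$ leaves does \emph{not} in general reduce $\Delta(T')$ to a constant: take for instance a vertex $v$ of degree $d=\maxd n/\log n$ whose neighbours are each the root of a path of length $\log n/\maxd$; after peeling any constant fraction of the leaves, $v$ still has degree $d$. Now feed this $T'$ into the Lu--Sz\'ekely LLL. Working with canonical events as one must, the relevant sum for a domain vertex $v$ (the analogue of $s(v)$ in the standard calculation) comes out proportional to $\glo\,\Delta(T')$, not to $\glo$ alone. With $\Delta(T')=\Theta(n/\log n)$ this is $\Theta(\glo n/\log n)\gg 1/4$ for any constant $\glo>0$, so the local-lemma condition fails. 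Your own bookkeeping---dependency degree $O(\Delta(T')^2 n)$ against probability $O(\glo/n)$---already forces $\Delta(T')=O(\glo^{-1/2})$, which leaf-stripping cannot guarantee.

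The paper reverses the roles of the two phases precisely to dodge this. It first embeds the $n/4$ \emph{highest}-degree vertices by a random-greedy procedure (one vertex at a time, uniformly over feasible images), and uses Azuma--Hoeffding---this is where the bound $\Delta(T)\le\maxd n/\log n$ and the properness of $c$ actually enter---to certify that the partial embedding has small ``weight'' parameters $w_f,m_g,w_g$. Only then is the LLL invoked, and crucially only on the remaining $3n/4$ vertices, every one of which has degree at most~$4$; with $D=O(1)$ the Lu--Sz\'ekely calculation goes through for constant $\glo$. So the high-degree vertices must be handled by concentration before the LLL, not inside it; your Hall/matching finishing step is then unnecessary.
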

	Recall that the condition of the colouring being globally $\glo n$-bounded cannot simply be dropped. However, we believe that the statement might be true for any $\glo < 1/2$. It is also possible, that the condition on the maximum degree of the tree $T$ can be dropped. 
		
	The proofs of Theorem \ref{TLRC} and \ref{TRT} are given in Section \ref{LRC} and \ref{RT} respectively. Each section starts with a short outline of the main steps of the proof. In Section \ref{conc}, we end with some remarks and open questions.

\subsection*{Notation}
	For any graph $G$, let $V(G)$ be the set of vertices of $G$, let $E(G)$ be the set of edges of $G$ and write $v(G)=|V(G)|$ as well as $e(G)=|E(G)|$.
	
	For two vertices $v,v'$ let $vv'$ denote the directed edge from $v$ to $v'$. Suppose we are given a colouring $c$ of the edges of $G$. For an edge $e$ let $c(e)$ be its colour. For a graph $G$, let $C(G)$ denote all the colours appearing in $G$.
	
	For a directed path $P=v_1\rightarrow\ldots\rightarrow v_{|P|}$ denote by $f(P):=v_1$ the first vertex of $P$ and by $l(P):=v_{|P|}$ the last vertex of $P$. 
	If $v\in P$ with $v\neq l(P)$, then let $v^+$ be the successor of $v$ on $P$ and write $c(v):=c(vv^+)$. If $v\in P$ with $v\neq f(P)$, then let $v^-$ be the predecessor of $v$ on $P$. For subsets of vertices $X\subset V(P)\setminus l(P)$, define $X^+=\{x^+:x\in X\}$. Define $X^-$ similarly.
	
	We call a pairwise vertex-disjoint collection $\Path=\{P_1,\ldots, P_r\}$ of directed paths  a \textit{path forest}. Given a path forest $\Path$ as above, define $f(\Path)=\{f(P_i):\, 1\leq i\leq r\}$ and $l(\Path)=\{l(P_i):\, 1\leq i\leq r\}$. 
	If $v\in V(\Path)\setminus l(\Path)$, then let $v^+$ be the successor of $v$ in $\Path$ and also write $c(v)=c(vv^+)$. Also define $v^-$ and $X^+, X^-$ for suitable $v\in V(\Path), X\subset V(\Path)$ in the expected way.
	
	For two paths $P,Q$, which only intersect in the vertex $l(P)=f(Q)$, we write $P+Q$ for the concatenation of $P$ and $Q$.
	
	For sets of vertices $A,B\subset V(G)$ and a subset of colours $C\subset C(G)$, let $d(A,B,C)$ denote the number of edges from $A$ to $B$ whose colour lies in $C$. Moreover, we write $d^-(A,C):=d(V(G),A,C)$ and similarly $d^+(A,C)=d(A,V(G),C)$. If we drop $C$ in this notation, we assume $C=C(G)$, e.g. $d(A,B):=d(A,B,C(G))$. Also, if $A=\{v\}$ consists of a single vertex, we will not resist the temptation of writing $d(v,B)$ rather than $d(\{v\},B)$. For a vertex $v\in V(G)$ we also write $N^-(v)=\{w\in V(G):\;wv\in G \}$.

	\section{Long Directed Rainbow Cycles}\label{LRC}
	In this section we prove Theorem \ref{TLRC}. Before giving the details we outline the strategy: We will first put aside a few colours from $\overleftrightarrow{K_n}$ to from an expander $H$. The remaining graph $\overleftrightarrow{K_n}\setminus H$ will have high minimum degree and colours disjoint from $H$. In this graph we will find a long rainbow path forest (a vertex-disjoint collection of paths). In the final step we will use edges from $H$ to `rotate' and `glue together' the rainbow path forest to obtain a long cycle. This technique was also used in \cite{longcycle} for the undirected case and we show here how to transfer this technique to the directed case.

	\subsection{Random Subgraph}
	This section is concerned with `random subgraphs' of a properly coloured $\overleftrightarrow{K_n}$. Here by `random subgraph' we mean the subgraph that for each colour of $\overleftrightarrow{K_n}$ contains all edges of that colour with probability $p$ and otherwise contains no edge of that colour, where the coin is thrown independently for each colour. The theorem below assures that such a subgraph looks almost like a truly random graph. The result was originally proved by Alon and the last two authors  \cite{longcycle} for properly edge-coloured, complete, undirected graphs. It is also pointed out that the argument can be generalised to different settings. The version we need, stated below, follows from Theorem 5.1 in \cite{longcycle} and the remarks thereafter.
	\begin{Theorem}[Alon, Pokrovskiy, and Sudakov \cite{longcycle}]\label{rand}
		Given a proper edge-colouring of $\overleftrightarrow{K_n}$, let $G$ be the random subgraph obtained by choosing each colour class indpendently with probability $p$ satisfying $\log(n)/n \ll p\leq 1/2$. Then, with high probability, all vertices in $G$ have in-degree and out-degree $(1-o(1))np$ and for every two disjoint subsets $A,B$ with $|A|,|B|\gg (\log n/p)^2,\,e_G(A,B)\geq (1-o(1))p|A||B|.$
	\end{Theorem}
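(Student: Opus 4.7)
I plan to follow the proof of Theorem~5.1 in \cite{longcycle}, transferring the argument from the undirected to the directed setting. For each colour $f$, let $I_f \in \{0,1\}$ be the independent Bernoulli$(p)$ indicator that colour $f$ survives into $G$, so that $e_G(X,Y) = \sum_f e_f(X,Y)\, I_f$ for every pair of sets $X, Y$, where $e_f(X,Y)$ denotes the number of colour-$f$ arcs from $X$ to $Y$.

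For the degree statement, fix a vertex $v$. Properness of the colouring implies that the $n-1$ out-edges at $v$ have pairwise distinct colours, so $d_G^+(v) = \sum_{u \neq v} I_{c(vu)}$ is a sum of $n-1$ independent Bernoulli$(p)$ variables with mean $(n-1)p$. A standard Chernoff bound gives $|d_G^+(v) - (n-1)p| \leq \varepsilon np$ with probability at least $1 - \exp(-\Omega(\varepsilon^2 np))$, and the same holds for $d_G^-(v)$. Since $np \gg \log n$, a union bound over all $2n$ such events yields the required uniform concentration of in- and out-degrees.

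For the edge-count statement, fix disjoint $A, B$. Properness implies that each colour class has in- and out-degree at most $1$ at every vertex, hence $e_f(A, B) \leq \min(|A|, |B|)$, while $\sum_f e_f(A, B) = |A||B|$ gives $\mathbb{E}\, e_G(A, B) = p|A||B|$ and $\sum_f e_f(A, B)^2 \leq \min(|A|, |B|) \cdot |A||B|$. Bernstein's inequality, applied to the centred independent variables $e_f(A, B)(I_f - p)$, then yields for a single pair
\[
\Pr\bigl[e_G(A, B) \leq (1-\varepsilon)\, p|A||B|\bigr] \leq \exp\bigl(-c\,\varepsilon^2\, p\, \max(|A|, |B|)\bigr).
\]

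The main obstacle is upgrading this single-pair estimate to a statement that holds simultaneously for all admissible $(A, B)$, because a naive union bound over $\binom{n}{|A|}\binom{n}{|B|}$ pairs is too wasteful in the regime $|A|, |B| \sim (\log n/p)^2$. Following \cite{longcycle}, I would reduce "$(A, B)$ is bad" to the existence of many \emph{deficient} vertices $v$ satisfying $d_G^+(v, B) < (1-\varepsilon')p|B|$; for each fixed $v$, the quantity $d_G^+(v, B)$ is again a sum of independent Bernoullis (one per colour of an out-edge from $v$), and concentrates as soon as $|B| \gg (\log n/p)^2$. A careful book-keeping over the choices of $B$, together with the degree bound already established, shows that deficient vertices cannot accumulate, so w.h.p.\ no bad pair exists. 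The only change from the undirected proof of \cite{longcycle} is that out- and in-directions must be treated separately, which only doubles the union-bound cost and is therefore harmless.
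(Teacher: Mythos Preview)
The paper does not give its own proof of this statement: it quotes the result from \cite{longcycle}, noting that ``the version we need \ldots\ follows from Theorem~5.1 in \cite{longcycle} and the remarks thereafter.'' So there is nothing in the present paper to compare against, and your decision to reproduce the argument of \cite{longcycle} in the directed setting is exactly what is being invoked.

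Your treatment of the degrees is correct and complete. For the edge-count between sets, the single-pair Bernstein estimate is also correct (properness gives $e_f(A,B)\le\min(|A|,|B|)$ in the directed case just as in the undirected one). The gap is in your last paragraph. The reduction to ``deficient vertices'' does not resolve the union bound as stated: for a \emph{single} vertex $v$ it is simply false that $d_G^+(v,B)\ge(1-\varepsilon')p|B|$ holds for all admissible $B$, since one may take $B$ entirely inside $V\setminus N_G^+(v)$, a set of size $(1-p+o(1))n\gg(\log n/p)^2$, and obtain $d_G^+(v,B)=0$. So a per-vertex Chernoff bound over all $B$ cannot be the mechanism, and the sentence ``a careful book-keeping over the choices of $B$ \ldots\ shows that deficient vertices cannot accumulate'' is a placeholder for, rather than a sketch of, the real argument. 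If you intend a self-contained proof you will have to reproduce the actual combinatorial step from \cite{longcycle} that produces the threshold $(\log n/p)^2$; the observation that treating out- and in-directions separately only doubles the union-bound cost is, as you say, harmless, but that is not where the difficulty lies.
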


	\subsection{Long Rainbow Path Forest}
	The following Lemma shows that we can find long rainbow path forests in properly coloured digraphs with high in-degree. It is based on  a technique by Andersen \cite{Andersen}, developed for undirected graphs. Its adaption to the directed case requires an additional idea relying on the following simple observation. 
	\begin{Observation}\label{add}
		Let $\Path$ be a path forest and let $v\in l(\Path)$ and $f,f'\in f(\Path)$ be distinct. Then, at least one of $\Path+vf$ and $\Path+vf'$ is still a path forest. 
	\end{Observation}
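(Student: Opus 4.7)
The plan is to argue by tracking which paths of $\Path$ the vertex $v$ and the first vertices $f,f'$ belong to, and then to check when concatenation fails. Since $v\in l(\Path)$, there is a unique path $P_v\in\Path$ with $l(P_v)=v$. Similarly, since $f,f'\in f(\Path)$ and the first vertex determines the path in a vertex-disjoint collection, there are unique paths $P_f,P_{f'}\in\Path$ with $f(P_f)=f$ and $f(P_{f'})=f'$, and because $f\neq f'$ these two paths are distinct.

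Next I would analyse what happens when we add the edge $vf$. Since $v$ is the last vertex of $P_v$ and $f$ is the first vertex of $P_f$, adding $vf$ merges these two paths into a single walk. If $P_v\neq P_f$, this walk is the concatenation $P_v+vf+P_f$, which is still a directed path (vertex-disjoint from the remaining paths of $\Path$), so the result is a path forest. If $P_v=P_f$, on the other hand, the walk closes up into a directed cycle, which is not allowed in a path forest. The analogous dichotomy holds for $vf'$.

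The main (and only) observation is then a pigeonhole: the bad case for $vf$ requires $P_f=P_v$, and the bad case for $vf'$ requires $P_{f'}=P_v$; but since $P_f\neq P_{f'}$, at most one of these equalities can hold. Consequently at least one of $\Path+vf$ and $\Path+vf'$ avoids forming a cycle and remains a valid path forest. There is no real obstacle here; the statement is essentially a bookkeeping fact, and the only thing to be careful about is the edge case where $v=f$ or $v=f'$, which is likewise excluded by the distinctness of $f$ and $f'$ combined with the fact that $v$ can coincide with at most one first vertex (and only if the corresponding path has length zero).
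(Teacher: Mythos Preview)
Your proof is correct and follows essentially the same approach as the paper's: both arguments reduce to the observation that adding $vf$ fails precisely when $v$ and $f$ lie on the same path, and since $f\neq f'$ this can happen for at most one of the two choices. The paper phrases it via the characterisation of path forests as acyclic digraphs with in-/out-degrees at most $1$, while you track the paths $P_v,P_f,P_{f'}$ explicitly, but the content is identical.
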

	\begin{proof}
		Observe that a path forest is precisely a graph which contains no cycle and has maximum in-degree and maximum out-degree at most 1. In $\Path+vf$ every vertex still has in-degree/out-degree at most 1. Thus, if $\Path+vf$ is not a path forest, then it must contain a cycle. This implies that $v$ and $f$ lie on the same path of $\Path$. But then $v$ and $f'$ lie on different paths in $\Path$, and hence $\Path+vf'$ is a path forest.
	\end{proof}
	We record another simple observation that will help us later.
	\begin{Observation}\label{pathsize}
		Let $\Path$ be a path forest, $v\in l(\Path)$ and $f\in f(\Path)$ as before and assume that $\Path'=\Path+vf$ is a path forest. We then have $|\Path'|=|\Path|-1$, meaning that $\Path'$ consists of one path less than $\Path$. Also, deleting an edge (but not the vertices it is incident to) from any path forest increases its size (number of paths) by 1.
	\end{Observation}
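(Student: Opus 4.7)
The plan is to argue directly from the structural definition of a path forest, which is a graph with no cycle and with maximum in- and out-degree at most $1$. Each vertex therefore lies on a unique maximal directed path, and the collection of these maximal paths partitions the edge set, so the size $|\Path|$ equals the number of connected components of $\Path$ viewed as an undirected graph.

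For the first part, I would identify the path $P_v \in \Path$ ending at $v$ and the path $P_f \in \Path$ starting at $f$. I would then observe that $P_v \neq P_f$: if they coincided, then $f$ would lie on the same path as $v$, but then adding the edge $vf$ (which goes from the last vertex of $P_v$ to an earlier vertex of $P_v$) would create a directed cycle, contradicting the assumption that $\Path'$ is a path forest. Since $v = l(P_v)$ and $f = f(P_f)$, the edge $vf$ concatenates these two paths into the single directed path $P_v + P_f$, while every other path of $\Path$ is unchanged. Hence $|\Path'| = |\Path| - 1$.

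For the second part, suppose we delete an edge $e = uw$ from some path $P = v_1 \to \ldots \to v_k$ in $\Path$, where $u = v_i$ and $w = v_{i+1}$. The remaining edges of $P$ form precisely two vertex-disjoint directed paths $v_1 \to \ldots \to v_i$ and $v_{i+1} \to \ldots \to v_k$ (one of which may be a single vertex), while every other path of $\Path$ is unaffected. The result is still a path forest, and the number of paths has increased by exactly one.

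There is no real obstacle here; the statement is purely structural and follows from counting connected components before and after the single modification. The only subtle point is ensuring that in the first part the two paths $P_v$ and $P_f$ are really distinct, which is exactly where the hypothesis that $\Path'$ is a path forest (i.e.\ acyclic) is used.
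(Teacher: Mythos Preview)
Your argument is correct; the paper in fact states this observation without proof, treating it as immediate from the definition of a path forest. Your write-up supplies precisely the routine verification the paper omits, including the one non-trivial point (that $P_v\neq P_f$ follows from the acyclicity assumption on $\Path'$).
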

	
	We now state and prove the main Lemma of this section, guaranteeing long rainbow path forests in properly coloured digraphs with high in-degree.
	
	\begin{Lemma}\label{long}
		Let $n\in\N$ and $0<\gamma\leq\delta<1$ satisfy 
		$$
		\left\lfloor\frac{\gamma n}{\lfloor 1/\delta\rfloor}\right\rfloor \frac{1}{\lfloor 1/\delta\rfloor} > 1.
		$$
		Then any properly coloured digraph $G$ on $n$ vertices with minimum in-degree at least $(1-\delta)n$ contains a directed rainbow path forest $\mathcal{P}=\{P_1,\ldots,P_r\}$ with $r\leq \gamma n$ and $\vert e(\mathcal{P})\vert \geq (1-3\delta)n$. 
	\end{Lemma}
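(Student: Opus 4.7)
The plan is an Andersen-type extremal argument. Take a rainbow path forest $\mathcal{P}=\{P_1,\ldots,P_r\}$ with $r\le\gamma n$ maximising $e(\mathcal{P})$, write $m:=e(\mathcal{P})$ and $U:=V(G)\setminus V(\mathcal{P})$, and assume for contradiction that $m<(1-3\delta)n$. Since $v(\mathcal{P})=m+r$ for any path forest, $|U|=n-m-r>(3\delta-\gamma)n\ge 2\delta n$, using $\gamma\le\delta$. The first observation is a pure counting one: each $u\in U$ has $d^{-}(u)\ge(1-\delta)n$, and since $G$ is properly coloured each colour of $C(\mathcal{P})$ contributes at most one in-edge at $u$, so $u$ receives at least $(1-\delta)n-m\ge 2\delta n$ in-edges of colours outside $C(\mathcal{P})$.

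Maximality of $\mathcal{P}$ then rules out certain locations for these new-colour in-edges. If $v\in l(\mathcal{P})$ and $c(vu)\notin C(\mathcal{P})$, appending $u$ to the path ending at $v$ yields a rainbow forest with $m+1$ edges and $r$ paths, contradicting the choice of $\mathcal{P}$. If $v\in U$, $c(vu)\notin C(\mathcal{P})$ and $r<\gamma n$, adding the single edge $vu$ as a new path also contradicts maximality. So every new-colour in-edge $vu$ at $u$ has $v\in V(\mathcal{P})\setminus l(\mathcal{P})$; in particular $v$ has a successor $v^{+}$ on its path. The \emph{switch} which deletes $v\to v^{+}$ and inserts $v\to u$ produces another rainbow path forest $\mathcal{P}'$ with the same $m$ edges but $r+1$ paths (using Observation~\ref{pathsize} for the deletion bookkeeping), in which $u\in l(\mathcal{P}')$ and $v^{+}\in f(\mathcal{P}')$.

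The heart of the argument is then to iterate such switches so as to force a \emph{merge}. After a switch the counting above reapplies, so the new leaf $u$ and the new start $v^{+}$ each receive many new-colour edges, and Observation~\ref{add} provides the flexibility to choose between competing start-vertex attachments when we try to glue two pieces together. The goal is to reach a configuration in which some current leaf is joined to some current start by a new-colour edge, because merging along such an edge yields a rainbow path forest with $m+1$ edges and at most $r\le\gamma n$ paths, contradicting maximality of $\mathcal{P}$. The technical inequality $\lfloor\gamma n/\lfloor 1/\delta\rfloor\rfloor\cdot(1/\lfloor 1/\delta\rfloor)>1$ is exactly the pigeonhole slack required: partitioning the $r$ paths of $\mathcal{P}$ into $k=\lfloor 1/\delta\rfloor$ blocks, at least one block must contain more than $k$ paths, and after at most $k$ successive switches this forces a compatible leaf/start pair available to merge.

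The main obstacle I anticipate is the colour bookkeeping along a chain of switches: each switch removes one edge colour from the current forest and inserts another, so $C(\cdot)$ drifts and an edge that was "new" relative to $\mathcal{P}$ may become old at an intermediate step. Controlling this drift, while simultaneously keeping the path count bounded by $\gamma n$ throughout the chain, is the technically delicate part. The two-option freedom of Observation~\ref{add} together with the $2\delta n$ slack in the count of switching edges at each $u$ should provide enough room to dodge the occasional colour clash and push the argument through to the desired contradiction.
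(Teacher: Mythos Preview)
Your outline has the right extremal starting point, but from there it diverges from the paper and does not close. Two concrete problems:

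\textbf{Wrong target set and wrong role of the inequality.} You send new-colour edges \emph{into} $U=V(G)\setminus V(\mathcal{P})$ and then try to iterate switches. The paper instead pads to $r=\gamma n$, partitions the \emph{start vertices} $f(\mathcal{P})$ into blocks $Q_0,\dots,Q_s$ each of size $\lfloor 1/\delta\rfloor$ (so $s+1\approx \delta\gamma n$ blocks), and studies edges \emph{into} the $Q_i$. The inequality $\delta^2\gamma n>1$ is not a pigeonhole on ``a block with many paths''; it is the statement that there are more than $1/\delta$ blocks, so that a gain of $\delta n$ per block accumulates to more than $n$. Your sentence ``after at most $k$ successive switches this forces a compatible leaf/start pair'' is not an argument: nothing in your setup explains why $k=\lfloor 1/\delta\rfloor$ switches suffice, or why a block containing more than $k$ paths helps.

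\textbf{The colour drift you flag is exactly the missing idea.} Your switch deletes $v\to v^{+}$ (an old colour) and inserts $v\to u$ (a new colour), so $C(\mathcal{P}')\neq C(\mathcal{P})$, and the path count goes up. You then need another ingredient to come back down and to control what ``new'' means along the chain; you do not supply one. The paper avoids this entirely: it defines a tower of colour sets $C_0=C(G)\setminus C(\mathcal{P})\subset C_1\subset\cdots$ by $V_i=\{v:d(v,Q_{i-1},C_{i-1})\ge 2\}$ and $C_i=C_{i-1}\cup\{c(v):v\in V_i\cap V(\mathcal{P})\setminus l(\mathcal{P})\}$. A double count on $d^-(Q_{i-1},C_{i-1})$ gives $|C_i|\ge|C_{i-1}|+\delta n$, and the claim $V_i\subset V(\mathcal{P})\setminus l(\mathcal{P})$ is proved by an explicit algorithm which, at each step, \emph{adds an edge and deletes an edge of the same colour} (so $C(\mathcal{P}_k)$ is invariant), using Observation~\ref{add} only to avoid creating a cycle, and with a strictly decreasing index $i_1>i_2>\cdots$ guaranteeing termination in a step where a genuinely new colour is added. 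That invariant is precisely what your switch lacks, and it is the mechanism that makes the chain terminate in a larger rainbow forest with at most $\gamma n$ paths.
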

	\begin{rmk} In our applications of Lemma \ref{long} we will have $\delta\rightarrow0$ and $\delta \gamma n \rightarrow \infty$, so that  
		$$
		\left\lfloor\frac{\gamma n}{\lfloor 1/\delta\rfloor}\right\rfloor \frac{1}{\lfloor 1/\delta\rfloor} = \left(1+o(1)\right) \delta^2 \gamma n.
		$$
	\end{rmk}
	
	\begin{proof}
		For the sake of readability, we shall assume that $1/\delta, \gamma n, \delta \gamma n$ are all integers, so that the condition on $\gamma, \delta, n$ simplifies to 
		\begin{equation}\label{readable}
		\delta^2 \gamma n > 1.
		\end{equation}
		
		Let $\mathcal{P}=\{P_1,\ldots,P_r\}$ be a maximum (w.r.t. $e(\mathcal{P})$) directed rainbow path forest with $r\leq \gamma n$. Assume for contradiction that $|e(\mathcal{P})|<(1-3\delta)n$. We will show that $\Path$ cannot be maximum. Note that we may assume that $r=\gamma n$, since if $r<\gamma n$, then we can simply add single vertices to $\Path$ in order to increase the number of paths. This is possible because $|V(G)\setminus V(\mathcal{P})|=n - |V(P)|\geq n- |e(P)|-r\geq 3\delta n- \gamma n\geq \gamma n$. 
		
		We partition the set $f(\Path)$ into sets $Q_0,\ldots,Q_s$ of equal size  $1/\delta$, where $s:=\delta\gamma n-1$, in other words: $$f(\Path)=\bigcup_{i=0}^s Q_i, \text{ where the union is disjoint and }|Q_i|=1/\delta \text{ for } i=0,\ldots,s.$$
		
		Define $C_0 = C(G)\setminus C(\Path)$ to be the set of colours not appearing in $\mathcal{P}$. For $1\leq i\leq s+1$ we define subsets $V_i\subset V(G)$ and $C_i\subset C(G)$ recursively as follows:
		\begin{equation*}
		\begin{aligned}
		V_i &:= \{v\in V(G) : \, d(v,Q_{i-1},C_{i-1})\geq 2 \} \\
		C_i &:= C_{i-1} \cup \{c(v): \, v\in V_i\cap V(\Path)\setminus l(\Path) \}.
		\end{aligned}
		\end{equation*}
		\begin{Observation}\label{newcolour}
			Note, that $c\in C_i\setminus C_{i-1}$ for some $i>0$ implies that there is some $v\in V_i\cap V(\Path)\setminus l(\Path)$ with $c(v)=c$ and such that there are at least two edges from $v$ to $Q_{i-1}$ whose colours lie in $C_{i-1}$. 
		\end{Observation}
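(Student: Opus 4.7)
The statement is essentially a direct unpacking of the two recursive definitions that precede it, so the plan is simply to trace through the definitions in order and observe that nothing more is required.

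First I would use the definition of $C_i$. The assumption $c \in C_i \setminus C_{i-1}$ means that $c$ is not in the first set of the union defining $C_i$, so it must come from the second set: $c \in \{c(v): v\in V_i\cap V(\Path)\setminus l(\Path)\}$. This yields a vertex $v \in V_i \cap V(\Path)\setminus l(\Path)$ with $c(v)=c$, which is the first half of what needs to be shown.

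Next I would use the definition of $V_i$. Membership $v \in V_i$ means precisely that $d(v, Q_{i-1}, C_{i-1}) \geq 2$, i.e.\ there are at least two edges from $v$ to $Q_{i-1}$ whose colours lie in $C_{i-1}$. This is exactly the second half of the claim, and combined with the previous paragraph completes the proof.

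There is no real obstacle here: the observation is being recorded as a convenient lemma for later use in the main argument of Lemma~\ref{long}, not proved from anything nontrivial. The only thing to be careful about is noting that $v$ indeed lies in $V(\Path)\setminus l(\Path)$ (so that the notation $c(v) = c(vv^+)$ even makes sense), but this is already built into the definition of $C_i$.
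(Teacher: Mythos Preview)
Your proposal is correct and matches the paper's intent: the observation is stated without a separate proof precisely because it is an immediate unpacking of the two recursive definitions, exactly as you describe.
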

		
		Our plan is to show that $|C_i|\geq |C_{i-1}|+ \delta n$. Once we establish this, we are almost done: We then have $|C_{s+1}|-|C_0| \geq (s+1) \delta n= \delta^2\gamma n^2  > n$ by \eqref{readable}. On the other hand $C_{s+1}\setminus C_0\subset C(\Path)$ by construction, so that $|C_{s+1}|-|C_0|\leq n$. This contradiction finishes the proof.
		
		It remains to prove $|C_i|\geq |C_{i-1}|+ \delta n$. In order to do so, we establish the following claim which relies on the maximality of $\Path$.
		\begin{Claim}\label{algoclaim}
			We have $V_i\subset V(\Path)\setminus l(\Path)$ for $i=1,\ldots,s+1$.	
		\end{Claim}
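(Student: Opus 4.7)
My plan is strong induction on $i$: I assume the claim for all $i'<i$ and for contradiction pick $v\in V_i$ lying in $l(\Path)\cup (V(G)\setminus V(\Path))$. The goal is, starting from $v$, to iteratively build a chain of rainbow path forests $\Path = \Path^{(0)}, \Path^{(1)}, \ldots, \Path^{(t)}$ via colour-preserving edge swaps, followed by one edge addition producing a rainbow path forest $\Path^*$ with $e(\Path^*)=e(\Path)+1$ and $|\Path^*|\leq \gamma n$, contradicting the maximality of $\Path$.

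At stage $k$ the data is a triple $(\Path^{(k)},v_k,i_k)$ with $v_k\in V_{i_k}$, the vertex $v_k$ lying in $l(\Path^{(k)})$ (or, only at $k=0$, in $V(G)\setminus V(\Path)$), and $Q_{i_k-1}\subset f(\Path^{(k)})$. Since $v_k\in V_{i_k}$ there are two edges from $v_k$ to $Q_{i_k-1}$ with colours in $C_{i_k-1}$, and by Observation~\ref{add} (or trivially when $v_k\notin V(\Path^{(k)})$) at least one of them, call it $v_kf_k$, can be added to $\Path^{(k)}$ without creating a cycle. If $c(v_kf_k)\in C_0$ I terminate by setting $\Path^*:=\Path^{(k)}+v_kf_k$; this is rainbow since swaps preserve $C(\Path^{(k)})=C(\Path)$ and $C_0$ is disjoint from $C(\Path)$. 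Otherwise $c(v_kf_k)\in C_{j_k}\setminus C_{j_k-1}$ for some $j_k<i_k$, and Observation~\ref{newcolour} produces a vertex $u_k\in V_{j_k}\cap V(\Path)\setminus l(\Path)$ (valid by the inductive hypothesis) with $c(u_ku_k^+)=c(v_kf_k)$ together with two cheap edges from $u_k$ to $Q_{j_k-1}$; I then set $\Path^{(k+1)}:=\Path^{(k)}+v_kf_k-u_ku_k^+$ and $(v_{k+1},i_{k+1}):=(u_k,j_k)$.

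The heart of the argument, and the main obstacle, is verifying that the invariants are preserved. Strict decrease of $i_k$ (since $j_k<i_k$) makes the indices pairwise distinct, so the $f_{k'}$'s lie in pairwise disjoint $Q$-sets. This gives $Q_{i_{k+1}-1}\subset f(\Path^{(k+1)})$ and, because the colours $c(v_{k'}f_{k'})$ also sit in disjoint strata $C_{j_{k'}}\setminus C_{j_{k'}-1}$ and are therefore distinct, the swapped-out edges are distinct edges of the rainbow $\Path$, so $u_ku_k^+$ is indeed still present in $\Path^{(k)}$ when we remove it. That $u_k$ then becomes a last vertex of $\Path^{(k+1)}$ follows since the sources $v_{k'}$ with $k'\leq k$ are all distinct from $u_k$: for $k'=0$, $v\neq u_k$ because $v\notin V(\Path)\setminus l(\Path)$ while $u_k$ is; for $k'\geq 1$, $v_{k'}=u_{k'-1}\neq u_k$ since distinct colours have distinct $\Path$-sources. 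Strict decrease of $i_k$ forces termination in at most $i$ steps (once $i_k=1$ one has $C_{i_k-1}=C_0$). To finish I would use Observation~\ref{pathsize} to track $|\Path^{(k)}|$: every swap preserves it when $v_k\in l(\Path^{(k)})$, and $|\Path^{(1)}|$ exceeds $|\Path^{(0)}|$ by one only in the corner case $v\notin V(\Path)$; the terminal addition then decreases the count by one when $v_t\in l(\Path^{(t)})$ and leaves it unchanged otherwise (only possible if $t=0$ and $v\notin V(\Path)$). A short case check yields $|\Path^*|\leq \gamma n$ and $e(\Path^*)=e(\Path)+1$, contradicting the maximality of $\Path$.
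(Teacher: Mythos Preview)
Your proposal is correct and follows essentially the same approach as the paper: both proofs argue by contradiction, starting from a violating vertex $v\in V_i$ and running an edge-swap algorithm that, at each stage, adds an edge $v_kf_k$ from the current last vertex into $Q_{i_k-1}$ using a colour in $C_{i_k-1}$, deletes the unique $\Path$-edge of that colour to expose a new last vertex, and uses the strict descent $i_1>i_2>\cdots$ to guarantee termination with a strictly larger rainbow path forest. Your strong-induction framing is equivalent to the paper's choice of the minimal violating index, and in fact neither device is strictly needed, since Observation~\ref{newcolour} already delivers $u_k\in V_{j_k}\cap V(\Path)\setminus l(\Path)$ directly (so your parenthetical ``valid by the inductive hypothesis'' is harmless but redundant); your invariant checks---particularly that $u_ku_k^+$ is still present and that $u_k$ becomes a last vertex---are slightly more explicit than the paper's, but the content is the same.
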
		
	\begin{proof}
			We prove the claim by contradiction, i.e. we assume that there exists an index $i$ such that $V_i\not\subset V(\Path)\setminus l(\Path)$ and deduce that $\Path$ is not a maximum rainbow path forest. Let $i_1$ be the smallest index with $V_{i_1}\not\subset V(\Path)\setminus l(\Path)$ and choose some $v_1\in V_{i_1}\cap (l(\Path)\cup (V(G)\setminus V(\Path)))$. For now, assume $v_1\in l(\Path)$. The case $v_1\in V(G)\setminus V(\Path)$ will be very similar and treated at the end of the proof. 
			
			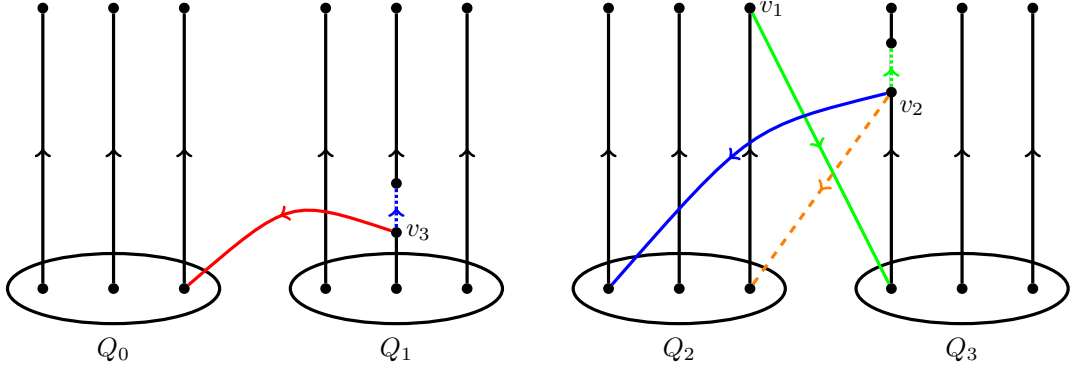
\begin{figure}[t]
				\begin{tikzpicture}[scale=0.93, inner sep=1,
				vertex/.style={circle,draw,fill,}
				]
				
				\begin{scope}[very thick,decoration={
					markings,
					mark=at position 0.5 with {\arrow{>}}}
				] 
				\clip (-2,-1.1) rectangle (14,4.1); 
				\foreach \x in {0,1,...,3}{
					\draw (4*\x,0) ellipse (1.5 and 0.5) node[below=0.6] {$Q_{\x}$};
					\foreach \y in {-1,0,1}{
						\node (start\x_\y) at (4*\x+\y,0) [vertex]{};
						\node (end\x_\y) at (4*\x+\y,4) [vertex]{};
						\draw[postaction={decorate},very thick] (start\x_\y)--(end\x_\y);
					}
				}
				\draw[postaction={decorate}] (end2_1) [green]-- (start3_-1);
				\node[vertex,label=right:$v_1$] at (end2_1) {}; 
				
				\node  (v2) at (11,2.8) [vertex,label=below right:$v_2$]{}; 
				\node  (v2p) at (11,3.5) [vertex]{};
				\draw (v2) [white]--(v2p); 
				\draw[postaction={decorate}] (v2) [green, densely dotted]--(v2p);
				
				\draw[postaction={decorate}] (v2) [orange,dashed]-- (start2_1);
				
				\draw[postaction={decorate},blue] (v2) [blue] ..controls(9,2.3).. (start2_-1);
				
				\node[vertex, label=right:$v_3$] (v3) at (4,0.8){}; 
				\node[vertex] (v3p) at (4,1.5){};
				\draw (v3) [white]-- (v3p);
				\draw[postaction={decorate}] (v3) [blue, densely dotted]-- (v3p);
				
				\draw[postaction={decorate},red] (v3) .. controls (2.5,1.3) .. (start0_1); 
				
				\end{scope}
				\end{tikzpicture}
				
				\caption{An example of how the algorithm in the proof of Claim \ref{algoclaim} might work. Suppose $i_1 = 4$, i.e. there is a vertex $v_1 \in V_4$, so that there are two edges from $v_1$ to $Q_3$ using colours in $C_3$. In the first step, the algorithm adds one of these edges (the solid green edge in our case) to the rainbow path forest and then deletes an edge of the same colour (the dotted green edge). Now, $i_2$ is the smallest index, so that `green' is an element of $C_{i_2}$, suppose $i_2 = 3$. Moreover, $v_2$ is the starting point of the deleted (green dotted) edge. There are two edges from $v_2$ to $Q_2$ (drawn as orange dashed and blue solid lines). In the second step, the algorithm will add one of them to the path forest. Adding the orange edge would create a cycle. Thus, the algorithm uses the other possibility and adds the blue, solid edge. It then deletes the dashed blue edge, to keep the path forest rainbow. Suppose we have $i_3 = 1$. In the last step, the algorithm adds the red edge and terminates, since there was no other red edge in the path forest. The resulting rainbow path forest contains one more edge than the original path forest contradicting maximality of $\mathcal{P}$.}\label{algo}
			\end{figure}
			
			We will apply an algorithm that transforms the original rainbow path forest $\Path_1=\Path$ into a rainbow path forest $\Path_{T+1}$ with one more edge than $\Path$ in $T$ steps. The notation needed for the formal description of this algorithm makes the proof a bit technical, we provide an instance of the algorithm in Figure \ref{algo}. 
			
			The algorithm will consist of $T$ steps, where the $k$-th step of the algorithm will have as input an index $i_k$, a vertex $v_k$, and a rainbow path forest $\Path_{k}$ and it will generate an output $i_{k+1},v_{k+1},\Path_{k+1}$. The path forest $\Path_{k+1}$ will be obtained from $\Path_{k}$ by adding one edge to $\Path_{k}$ and deleting one from it. The only exception from this is the last step, i.e the $T$-th step, which will only output $\Path_{T+1}$, which will be obtained by adding one edge to $\Path_T$ (without deleting one). 
			
			All in all, the algorithm will generate a sequence of indices $i_1,\ldots,i_{T}$, a sequence of vertices $v_1,\ldots,v_{T}$ and a sequence of path forests $\Path_1,\ldots,\Path_{T+1}$.
			These indices, vertices and path forests will satisfy the following properties.
			\begin{enumerate}[(a)]
				\item\label{b}
				We have $i_{k}>i_{k+1}>0$ for $1\leq k < T$.
				\item\label{a} The edge added to $\Path_k$ and the edge deleted from $\Path_k$ at the $k$-th step to obtain $\Path_{k+1}$ have the same colour for $k<T$. In particular, $C(\Path_{k+1})=C(\Path_k)$.
				\item\label{e} The edge $e$ added at the $k$-th step to $\Path_{k}$ goes from $v_k$ to $Q_{i_k-1}$.
				\item\label{c} For $k=1,\ldots, T$ we have $v_k\in l(\Path_{k})$.
				\item\label{d} For $k\geq 2$ we have $v_k\in V_{i_k}\cap V(\Path)\setminus l(\Path)$. For $k=1$ we have $v_k \in V_{i_k}$.
				
				\item\label{f} The edge $e$ added at the $k$-th step to $\Path_{k}$ satisfies $c(e)\in C_{i_k-1}$ as well as $c(e)\in C_{i_{k+1}}\setminus C_{i_{k+1}-1}$.			
			\end{enumerate}
			It is easily checked that these properties hold for $i_1,v_1,\Path_1$ as defined above and we will show that they still hold after the $k$-th step for $i_{k+1}, v_{k+1},\Path_{k+1}$. 
			We list three easy consequences of the properties \eqref{a}-\eqref{f} above.
			\begin{Claim}\leavevmode
				\begin{enumerate}[(i)]						
					\item\label{i} The algorithm terminates after a finite number of steps.
					\item\label{ii} We have $Q_{i_k-1}\subset f(\Path_{k})$.
					\item\label{iii} Before the $k$-th step, no edge $e$ of colour $c(e)\in C_{i_k-1}$ was deleted (from $\Path_i$ for any $i$ with $1\leq i< k$).
				\end{enumerate}
			\end{Claim}
			\begin{proof}
				For \eqref{i}, note that by property \eqref{b} the sequence $i_1,i_2,\ldots$ is strictly decreasing and bounded below by 0. Since all the indices are integers, the sequence must be finite.
				
				For \eqref{ii}, note that in the $k$-th step of the algorithm, we add an edge incident to $Q_{i_k-1}$ by \eqref{e}. In particular, since $i_1>i_2>\ldots$, this implies that before the $k$-th step, we added no edge directed to $Q_{i_{k}-1}$. This implies that each vertex in $Q_{i_k-1}$ has in-degree 0 in the graph $\Path_{k}$ or equivalently $Q_{i_k-1}\subset f(\Path_{k})$.
				
				For \eqref{iii}, note that the edge deleted at the $k$-th step has a colour lying in $C_{i_{k+1}}\setminus C_{i_{k+1}-1}$ by \eqref{a} and \eqref{f}. Since $i_1>i_2>\ldots$ and $C_0\subset C_1\subset C_2\subset\ldots$, this implies that before the $k$-th step, no edge $e$ of colour $c(e)\in C_{i_k-1}$ was deleted.	
			\end{proof}
			
			We now describe the $k$-th step of the algorithm. Recall that it has input $i_k,v_k,\Path_{k}$ satisfying properties \eqref{a}-\eqref{f}. We need to show that its output $i_{k+1},v_{k+1},\Path_{k+1}$ also satisfies these properties (for $k<T$). 
			
			\noindent\underline{$k$-th step of algorithm:} 
			\begin{itemize} 
				\item We first explain which edge will be added to $\Path_{k}$. By \eqref{d} we have $v_{k}\in V_{i_k}$, so there are at least two edges from $v_{k}$ to $Q_{i_k-1}$ using colours in $C_{i_k-1}$ (by the definition of $V_{i_k}$). Call these edges $e=v_k f$ and $e'=v_kf'$. By \eqref{c} and \eqref{ii} we have $v_k\in l(\Path_{k})$ and $f,f'\in Q_{i_k-1}\subset f(\Path_{k})$. Thus, by Observation \ref{add} either $\Path_{k}+e$ or  $\Path_{k}+e'$ is a path forest. Assume without loss of generality that $\Path_{k}+e$ is a path forest. $e$ is the edge that we will add to $\Path_k$. Hence, \eqref{e} will be true after the $k$-th step. Note also that $c(e)\in C_{i_k-1}$, so that the first part of \eqref{f} is satisfied. 
				
				\item Since we want to add $e$ to $\Path_{k}$, we might need to delete an edge from $\Path_{k}$ which has colour $c(e)$ to make sure that $\Path_{k+1}$ will be rainbow. We distinguish two cases. \\
				\textbf{Case 1:} If  $c(e)\in C_0$, then define $\Path_{k+1}=\Path_{k}+e$. Observe that $\Path_{k+1}$ is rainbow, since $C(\Path_{k})=C(\Path_{0})$ (by \eqref{a}) does not contain $c(e)$ before adding $e$. Terminate the algorithm. \\
				\textbf{Case 2:} If $c(e)\notin C_0$, then choose $i_{k+1}$ so that $c(e)\in C_{i_{k+1}}\setminus C_{i_{k+1}-1}$. This is possible since $C_0\subset C_1\subset C_2 \subset\ldots$ by construction. Note that the second part of \eqref{f} is now also satisfied. Recall that $c(e) \in C_{i_k-1}$, so that $0< i_{k+1}\leq i_k-1$, assuring that \eqref{b} is true. By Observation \ref{newcolour}, we can find $v_{k+1}\in V_{i_{k+1}}\cap V(\Path)\setminus l(\Path)$ with $c(v_{k+1})=c(e)$. Define $\Path_{k+1}=\Path_{k}+e-v_{k+1}v^+_{k+1}$, so that \eqref{a} holds.\\				
				This step implicitly assumes that $v_{k+1}v^+_{k+1}\in\Path_{k}$. To see that this is true, note that $v_{k+1}v^+_{k+1}\in \Path_1$ and that, by \eqref{iii}, it has not been deleted from $\Path_1$ before the $k$-th step, so that it must still be in $\Path_{k}$.		 
				
				\item We have already seen that properties \eqref{a},\eqref{b},\eqref{e},\eqref{f} hold for this choice of $i_{k+1},v_{k+1},\Path_{k+1}$.  \eqref{c} is satisfied since we deleted $v_{k+1}v_{k+1}^+$ from $\Path_{k}$ to obtain $\Path_{k+1}$. \eqref{d} holds by our choice of $v_{k+1}$.  
				
				\item Continue with step $k+1$.
			\end{itemize}			
			Using Observation \ref{pathsize}, it is now easy to check that $|\Path_{i+1}|=|\Path_{i}|$ for $i=1,\ldots, T-1$ and $|\Path_{T+1}|= |\Path_{T}|-1$. So, all in all, we get $|\Path_{T+1}|= |\Path_1|-1 \leq \gamma n$. It is also not hard to check that $e(\Path_1)=\ldots=e(\Path_{T})=e(\Path_{T+1})-1$ so that $\Path_{T+1}$ is a rainbow path forest with at most $\gamma n$ paths and one more edge than $\Path_1=\Path$, contradicting maximality of $\Path$ and thus finishing the proof of the claim.
			
			It remains the case $v_1\in V(G)\setminus V(\Path)$ (as opposed to the case $v_1\in l(\Path)$). In this case, property \eqref{c} is violated in the first step of our algorithm. We can carry it out nevertheless, adding an edge from $v_1$ to $Q_{i_1-1}$ and deleting an edge of the same colour to obtain $\Path_2$ from $\Path_1$. We then have $|\Path_2|=|\Path_1|+1$. From now on, the algorithm works exactly as before and we have $|\Path_{i+1}|=|\Path_{i}|$ for $i=2,\ldots, T-1$ and $|\Path_{T+1}|= |\Path_{T}|-1$. So, all in all, we get $|\Path_{T+1}|= |\Path_1| \leq \gamma n$ as before, finishing the proof.
		\end{proof}
		We now use the claim to prove $|C_i|\geq |C_{i-1}|+ \delta n$ as promised. We have $|C_i|\geq |C_0|+|V_i\cap V(\Path)\setminus l(\Path)|$, since for each $v\in V_i\cap V(\Path)\setminus l(\Path)$, we have $c(v)\in C_i$. Using the claim we get $|V_i\cap V(\Path)\setminus l(\Path)| = |V_i|$. Combining these two inequalities we obtain: 
		\begin{equation}\label{ci}
		|C_i|\geq|C_0|+|V_i|.
		\end{equation} 
		We now use a double counting argument to bound $|V_i|$ in terms of $|C_{i-1}|-|C_0|$. Choose $A\in \R$ so that $|C_{i-1}|-|C_0|=A\delta n$. Observe that for any vertex $v$ we have $d^-(v,C_{i-1})\geq d^-(v)-(|C(G)|-|C_{i-1}|)\geq (1-\delta)n-(|C(G)|-|C_0|) + (|C_{i-1}|-|C_0|)\geq 2\delta n + A\delta n $. Here, in the last inequality we used $|C(G)|-|C_0|=|C(\Path)|\leq (1-3 \delta) n$. Using  $d^-(v,C_{i-1})\geq (A+2)\delta n$ we obtain
		\begin{equation}\label{deg}
		d^-(Q_{i-1},C_{i-1})\geq (A+2)\delta n \cdot |Q_{i-1}|= (A+2) n.
		\end{equation}
		On the other hand, writing $X=\{v\in V(G): d(v,Q_{i-1},C_{i-1})\leq 1\}$ and $V_i=\{v\in V(G): d(v,Q_{i-1},C_{i-1})> 1\}$, we also get 
		\begin{equation}\label{count}
		d^-(Q_{i-1},C_{i-1})\leq |X| + |V_i|\cdot |Q_{i-1}| \leq n + |V_i|/\delta.
		\end{equation}
		Here, the first inequality uses $d(v,Q_{i-1},C_{i-1})\leq |Q_{i-1}|$ for $v\in V_i$. Putting \eqref{deg} and \eqref{count} together, we finally obtain $|V_i|\geq (A+1)\delta n$. Using this in \eqref{ci} we get 
		$$
		|C_i|-|C_0|\geq (A+1)\delta n.
		$$
		Recalling the definition of $A$, this establishes $|C_i|\geq |C_{i-1}|+\delta n$ as desired.
	\end{proof}

	\subsection{Rotating and gluing the path forest}
	The following Lemma shows how to `rotate' and `glue together' a directed rainbow path forest. In \cite{longcycle} an undirected version of this Lemma was proved using the concept of path rotations. We use methods for rotating directed paths (for example used by Frieze and Krivelevich \cite{FK}) and some new ideas needed to produce a rainbow structure. 
	\begin{Lemma}\label{glue}
		Let $b,m,r\in \N$ so that $mr\leq b$. Let $\Path =\{P_1,\ldots, P_r\}$ be a rainbow path forest with $V(\Path)\subset V(G)$ and let $H$ be a properly coloured digraph,  with $V(H)=V(G)$, so that $C(H)$ is disjoint from $C(\Path)$ and so that for any two disjoint sets of vertices $A,B\subset V(G)$ with $|A|=|B|=b$ we have $d_H(A,B)\geq 2b+1$. Also assume that $H$ has minimum in-degree $\delta_H^-\geq 5b$. Then either $|P_1|\geq v(\Path)-2b$ or there exist edges $e_1,e_2,e_3\in H$ and a rainbow path forest $\Path'=\{P_1',\ldots,P_r'\}$ such that $E(P')\subset E(\Path) +e_1+e_2+e_3$ and $V(\Path')\supset V(\Path)$ and $|P_1'|\geq |P_1|+m$.
	\end{Lemma}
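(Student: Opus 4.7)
We may assume $|P_1| < v(\Path) - 2b$, otherwise the first alternative holds. Then $|V(\Path)\setminus V(P_1)|>2b$, and since the paths $P_j$ ($j\neq 1$) with $|P_j|<m$ together contain at most $(r-1)m<mr\le b$ vertices, the \emph{long} paths (those $P_j$ with $j\neq 1$ and $|P_j|\ge m$) cover more than $b$ vertices; in particular at least one long path exists. The plan is to splice a portion of length at least $m$ of a long path into $P_1$ using at most three edges of $H$.

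The elementary move is a \emph{start-splice}: an in-edge $y\to u \in H$ with $u=f(P_1)$ and $y\in P_j$ ($j\neq 1$) lets us delete $y\to y^+$ from $P_j$ (if $y\neq l(P_j)$) and add $y\to u$, producing a path $f(P_j)\to\cdots\to y\to u\to\cdots\to l(P_1)$ of length $|P_1|+\mathrm{pos}(y)$. The symmetric \emph{end-splice} at $v=l(P_1)$ appends a suffix of length $|P_j|-\mathrm{pos}(x)+1$. If any in-neighbour $y$ of $u$ (or out-neighbour $x$ of $v$) in $H$ lies at position $\ge m$ from the near end of a long path, a single $H$-edge $e_1$ extends $P_1$ by $\ge m$; at most one further $H$-edge, or a purely internal split, restores $|\Path'|=r$, so one or two $H$-edges suffice.

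Otherwise no useful single-edge splice exists, and the $\ge 5b$ in-neighbours of $u$ in $H$ avoid all deep positions of long paths. At most $(m-1)(r-1)\le mr\le b$ of them lie in the first $m-1$ positions of any path, so at least $4b$ lie in $V(P_1)$ (after discarding the handful in $V(G)\setminus V(\Path)$, which can be appended trivially one at a time). Each such in-edge $v_i\to u$ with $v_i\in V(P_1)$ supports a \emph{rotation}: adding $v_i\to u$ creates the cycle $v_1\to\cdots\to v_i\to v_1$, and deleting any $v_s\to v_{s+1}$ with $1\le s\le i-1$ turns $P_1$ into two paths $v_{s+1}\to\cdots\to v_i\to v_1\to\cdots\to v_s$ and $v_{i+1}\to\cdots\to v_k$; this costs one $H$-edge and increases the path count by one. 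Ranging over the many valid choices of $(i,s)$ produces at least $b$ candidate new endpoints. Applying $d_H(A,B)\ge 2b+1$ with $A$ a set of $b$ such rotation-derived endpoints and $B$ a set of $b$ deep vertices in long paths yields a second $H$-edge $e_2$ whose splice extends the main component by at least $m$ vertices, and a third $H$-edge $e_3$, produced by a further application of expansion, merges the two leftover fragments so that $|\Path'|=r$. Since $C(H)\cap C(\Path)=\emptyset$ and $H$ is properly coloured, we may ensure $e_1,e_2,e_3$ have pairwise distinct colours (by excluding at most two colours at each application of expansion), so $\Path'$ is rainbow.

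The main obstacle is coordinating the rotation, splice, and re-merge so that exactly three $H$-edges suffice and $|\Path'|=r$ is preserved. The hypotheses are tailored for this: $\delta_H^-\ge 5b$ supplies many rotation candidates at $u$, $mr\le b$ ensures that the deep-long-path mass exceeds $b$, and $d_H(A,B)\ge 2b+1$ provides the routing between the $b$-set of rotation-derived endpoints and the $b$-set of long-path entry vertices, with a bit of slack to dodge colour collisions.
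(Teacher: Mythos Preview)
Your overall strategy---splice a deep prefix of some long $P_j$ onto $P_1$, using rotations when no direct splice is available---matches the paper's, but two steps are genuine gaps.

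First, the vertices of $V(G)\setminus V(\Path)$ cannot be ``discarded'' or ``appended trivially one at a time''. There may be as many as $5b$ in-neighbours of $u=f(P_1)$ lying outside $V(\Path)$, and appending one such vertex increases $|P_1|$ by $1$, not by $m$. This case must be handled in its own right: the paper does so by taking a $b$-set $X\subset N_H^-(u)\cap(V(G)\setminus V(\Path))$ and a $b$-set $B_0$ of deep vertices in long paths, using expansion to find $x\in X$ with two in-edges from $B_0$ (so one avoids the colour $c(xu)$), and forming $P_1'=Q_1+f+xu+P_1$. Without this, your claim ``at least $4b$ lie in $V(P_1)$'' is unjustified.

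Second, your rotation is weaker than needed. Adding $v_i\to u$ and deleting $v_s v_{s+1}$ and $v_i v_{i+1}$ breaks $P_1$ into two pieces and raises the path count; you then need $e_2$ for the splice \emph{and} $e_3$ to re-merge the leftover fragment, but you never specify which two $b$-sets witness the re-merge edge or why they exist. The paper avoids this entirely with a two-edge rotation that keeps $P_1$ a single path: for a ``good'' $v_k$ (one with a friend $v_a$ such that $v_a v_1,\, v_{k-1}v_{a+1}\in H$ have different colours) the path $P_1(v_k,v_a)$ covers exactly $V(P_1)$ and starts at $v_k$. Expansion between $X^-$ and $Y^+$ (with $X\subset A$ non-good, $Y\subset N_H^-(v_1)\cap A'$) shows there are $\ge b$ good vertices; then expansion from $B_0$ into the good set gives some $v_k$ with three in-edges from $B_0$, so one of them, say $f_1$, avoids both rotation colours, and $P_1'=Q_1+f_1+P_1(v_k,v_a)$ uses exactly three $H$-edges with no re-merge required. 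Your sketch does not supply an argument of this kind, and the budget of three $H$-edges with $|\Path'|=r$ does not obviously survive your version.
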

	\begin{proof}	
		Assume that $|P_1| < v(\Path) - 2b$. For $i=1,\ldots,r$ write $P_i = v_{i,1}\rightarrow\ldots\rightarrow v_{i,|P_i|}$ and define $v_k=v_{1,k}$ for the vertices on $P_1$. Set 
		\begin{equation*}
		\begin{aligned}
		A&=\{v_{j}: 2\leq j \leq 2b\}\qquad &A'&=\{v_{j}:  j > 2b\}\\
		B&=\{v_{i,j}:\;i\geq 2, j < m\} \qquad &B'&=\{v_{i,j}:\;i\geq 2, j \geq  m\}.
		\end{aligned} 
		\end{equation*}
		Note that $2b\leq v(\Path)- |P_1| = |B|+|B'| \leq mr + |B'|$ which implies $|B'|\geq b$ since $mr\leq b$. In the following let $B_0\subset B'$ be a subset of size $|B_0|=b$. 
		
		Observe that in $H$ there are at least $5b$ edges directed to $v_1$. They start at $A\cup B$ or at $A'$ or at $B'$ or at $V(G)\setminus V(\Path)$. Observe that $|A\cup B|\leq 3b$  so that there are at least $2b$ edges from $A'\cup B'\cup \left(V(G)\setminus V(\Path)\right)$ to $v_1$. We distinguish three cases: Either there is an edge from $B'$ to $v_1$ or there are $\geq b$ edges from $V(G)\setminus V(\Path)$ to $v_1$ or there are $>b$ edges from $A'$ to $v_1$. Figure \ref{rotate} illustrates how the path forest $\Path$ changes in each of the three cases as described below.
		
		\underline{Case 1:} If $H$ contains an edge $e=v_{s,t}v_{1}$ from $B'$ to $v_{1}$ (i.e. $s\geq 2$ and $t\geq m$), then divide $P_s$ into two parts $Q_1$ from $v_{s,1}$ to $v_{s,t}$ and $Q_2$ from $v_{s,t+1}$ to $v_{s, |P_s|}$. Note that $Q_2$ might be empty. Now define $P_1'=Q_1+e+P_1$ and $P_s'=Q_2$ as well as $P_i'=P_i$ for $i\neq 1,s$. Then $\Path'=\{P_1',\ldots,P_r'\}$ satisfies the claim of the Lemma for $e_1=e_2 = e_3=e$.
		
		\underline{Case 2:} Assume now that there are $\geq b$ edges from $V(G)\setminus V(\Path)$ to $v_1$. Choose a subset  $X \subset N_H^-(v_{1})\cap V(G)\setminus V(\Path)$ of size $|X|=b$. Recall $B_0\subset B'$ with $|B_0|=b$. Since $d_H(B_0,X)\geq 2b+1$, there is a vertex $x\in X$ with $d_H(B_0,x)\geq 2$. Pick two edges $f,f'\in H$ from $B_0$ to $x$, they have distinct colours since $H$ is properly coloured. So we may without loss of generality assume that $c(f)\neq c(xv_{1})$. Write $f=v_{s,t}x$ and define $Q_1$ and $Q_2$ as before. Set $P_1'= Q_1 + f + xv_{1} + P_1$ and $P_s'=Q_2$ as well as $P_i'=P_i$ for $i\neq 1,s$. Now $\Path'=\{P_1',\ldots,P_r'\}$ with $e_1=e_2=f$ and $e_3 = xv_{1}$ satisfies the claim.
		
		\underline{Case 3:} We now consider the case $d_H(A',v_{1})> b$. We call a vertex $v_k\in A$ good if there is an index $a$ with $2b < a < |P_1|$ so that the edges $v_{a}v_1, v_{k-1}v_{a+1}$ both lie in $H$ and have different colours. In this case we call $v_a$ a friend of $v_k$. If $v_k$ is good, then there is a `rotation' of $P_1$ that starts at $v_k$: Define $R_1=v_1\rightarrow v_2\rightarrow\ldots\rightarrow v_{k-1}$ and $R_2=v_k\rightarrow v_{k+1}\rightarrow\ldots\rightarrow v_{a}$ and $R_3=v_{a+1}\rightarrow v_{a+2}\rightarrow\ldots\rightarrow v_{|P_1|}$ and observe that the path  $P_1(v_k,v_a):= R_2 + v_av_1 + R_1 + v_{k-1}v_{a+1} + R_3$ is a rainbow path covering exactly the vertices of $P_1$ and having $v_k$ as a starting point.
		
		We now claim that there are at least $b$ good vertices. If not, pick a set $X\subset A$ of vertices that are not good and has size $|X|=b$. Also pick a subset $Y\subset N_H^-(v_1)\cap A'\setminus\{v_{|P_1|}\}$ of size $|Y|=b$ (this is possible since $d_H(A',v_{1})> b$). We have $d_H(X^-,Y^+)\geq 2b+1$, so we can pick a vertex $v_{a+1}\in Y^+$ so that there are two edges $e=v_{k-1}v_{a+1}, e'=v_{j-1}v_{a+1}$ in $H$ with $v_{k-1},v_{j-1}\in X^-$. Since $c(e)\neq c(e')$ we may without loss of generality assume that $c(e)\neq c(v_{a}v_1)$. But this shows that $v_{k}\in X$ is good, contradicting our choice of $X$. We have thus established that there are at least $b$ good vertices in $A$. 
		
		Let $X\subset A$ be a set of good vertices of size $|X|=b$. Since $d_H(B_0,X)\geq 2b+1$, there is a vertex $v_k\in X$ so that $d_H(B_0,v_k)\geq 3$. Pick three edges $f_1,f_2,f_3\in H$ that go from $B_0$ to $v_k$. Since $v_k$ is good, it has a friend $v_a$. Since the colours of $f_1,f_2,f_3$ are pairwise distinct, we may assume without loss of generality that $c(f_1)\neq c(v_{k-1}v_{a+1}),c(v_{a}v_1)$. Write $f_1 = v_{s,t}v_k$, define $Q_1, Q_2$ as before.  Finally, set $P_1'=Q_1+f_1+ P_1(v_k,v_a)$
		and $P_s'=Q_2$ as well as $P_i' = P_i$ for $i\neq 1,s$. Again, $\Path'=\{P_1,\ldots,P_r'\}$ satisfies the claim for $e_1=v_{k-1}v_{a+1},e_2=v_{a}v_1,e_3=f_1$.
			\begin{figure}
				\centering
				\begin{tikzpicture}[inner sep=1,
				vertex/.style={circle,draw,fill,}
				]
				\begin{scope}[very thick,decoration={
					markings,
					mark=at position 0.5 with {\arrow{>}}}
				] 
				\node[vertex,label=below left:$v_1$] (v1) at (0,0) {};
				\node[vertex] (vend) at (0,2.5) {};
				\draw[postaction={decorate}] (v1)--(vend);
				
				\node[vertex,label=below right:$v_{s,1}$] (vs1) at (1,0) {};
				\node[vertex,label=right:$v_{s,t}$] (vst) at (1,1.2) {};
				\node[vertex,label=right:$v_{s,t+1}$] (vst1) at (1,1.8) {};
				\node[vertex] (vse) at (1,2.5) {};
				\draw[postaction={decorate}] (vs1)--(vst);
				\draw[postaction={decorate}] (vst)[gray, very thin]--(vst1);
				\draw[postaction={decorate}] (vst1)--(vse);
				\draw[postaction={decorate}, densely dotted] (vst)--(v1);
				
				\node at (0.5,-1) {Case 1};
				
				\node[vertex,label=below left:$v_1$] (w1) at (3.7,0) {};
				\node[vertex] (wend) at (3.7,2.5) {};
				\draw[postaction={decorate}] (w1)--(wend);
				
				\node[vertex,label=below right:$v_{s,1}$] (ws1) at (5.3,0) {};
				\node[vertex,label=right:$v_{s,t}$] (wst) at (5.3,1.2) {};
				\node[vertex,label=right:$v_{s,t+1}$] (wst1) at (5.3,1.8) {};
				\node[vertex] (wse) at (5.3,2.5) {};
				\draw[postaction={decorate}] (ws1)--(wst);
				\draw[postaction={decorate}] (wst)[gray, very thin]--(wst1);
				\draw[postaction={decorate}] (wst1)--(wse);

				\node[vertex,label=right:$x$] (con) at (4.5,3.5) {}; 
				\draw (con) ellipse (1 and 0.25) node[above=0.27] {$V(G)\setminus V(\mathcal{P})$};
				\draw[postaction={decorate},densely dotted] (wst)--(con);
				\draw[postaction={decorate},densely dotted] (con)--(w1);
				\node at (4.7,-1) {Case 2};
				
				\node[vertex,label=below:$v_1$] (x1) at (8,0) {};
				\node[vertex,label=below left:$v_{k-1}$] (xk1) at (8,1) {};
				\node[vertex,label=above right:$v_k$] (xk) at (8,1.5) {};
				\node[vertex,label=right:$v_a$] (xa) at (8,2.5) {};
				\node[vertex,label=right:$v_{a+1}$] (xa1) at (8,3) {};
				\node[vertex] (xe) at (8,4) {};
				\draw[postaction={decorate}] (x1)--(xk1);
				\draw[postaction={decorate}] (xk1)[gray, very thin]--(xk);
				\draw[postaction={decorate}] (xk)--(xa);
				\draw[postaction={decorate}] (xa)[gray,very thin]--(xa1);
				\draw[postaction={decorate}] (xa1)--(xe);
				\draw[postaction={decorate},densely dotted] (xa).. controls (8.7,1.5) ..(x1);
				\draw[postaction={decorate},densely dotted] (xk1).. controls (7.3,2) ..(xa1);

				\node[vertex,label=below right:$v_{s,1}$] (xs1) at (9.5,0) {};
				\node[vertex,label=right:$v_{s,t}$] (xst) at (9.5,1.2) {};
				\node[vertex,label=right:$v_{s,t+1}$] (xst1) at (9.5,1.8) {};
				\node[vertex] (xse) at (9.5,2.5) {};
				\draw[postaction={decorate}] (xs1)--(xst);
				\draw[postaction={decorate}] (xst)[gray, very thin]--(xst1);
				\draw[postaction={decorate}] (xst1)--(xse);
				\draw[postaction={decorate},densely dotted] (xst).. controls (8.9,1.5) ..(xk);
				
				\node at (8.9, -1) {Case 3};
				
				\end{scope}
				\end{tikzpicture}
				\caption{The three different cases that can arise in the proof of Lemma \ref{glue}. In each case, only the two paths that change, namely $P_1$ and $P_s$, are displayed. The edges that will be deleted from the path forest are thin and gray, the edges from the expander $H$ that will be added, i.e. $e_1,e_2,e_3$, are dotted.}\label{rotate}
			\end{figure}
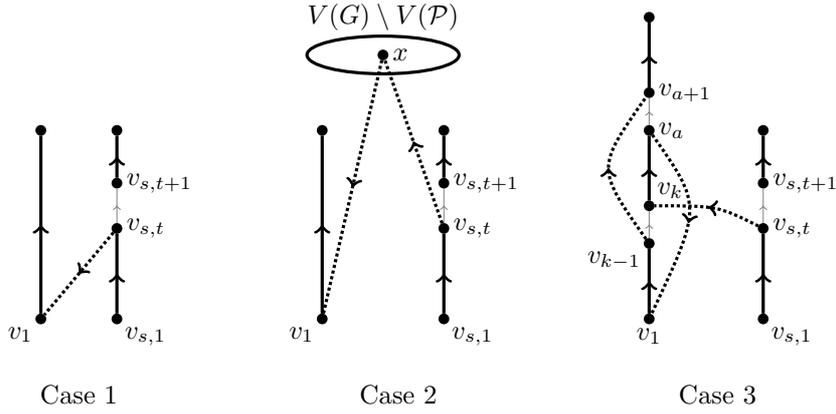		
	\end{proof}

	\subsection{Proof of Theorem \ref{TLRC}}
	We have now collected all the ingredients to prove Theorem \ref{TLRC} and show how to combine them in this section. 	
	We start by noting that the existence of long rainbow path forests follows directly from Lemma \ref{long} applied to a properly edge-coloured $\overleftrightarrow{K_n}$. Take e.g. $\delta = 2n^{-1/3}$ and $\gamma = n^{-1/3}$. We now turn our attention to the existence of long rainbow cycles.
	
	Consider a proper colouring of $\overleftrightarrow{K_n}$. Create a subgraph $H$ of $\overleftrightarrow{K_n}$ as in Theorem \ref{rand} with $p=6n^{-1/5}$. Set $G=K_n\setminus H$. Let $b=n^{4/5}$. With high probability every $v\in V(G)$ satisfies $5.5 b\leq d^-_H(v)\leq 7b-1$. Thus, any $v\in G$ satisfies $d^-_G(v)=n-1-d^-_H(v)\geq (1-7n^{-1/5})n$. By Lemma \ref{long}, for $\gamma = n^{-3/5}$ and $\delta = 7n^{-1/5}$, $G$ contains a rainbow path forest $\Path=\{P_1,\ldots,P_r\}$ with $r\leq\gamma n=n^{2/5}$ and $e(\Path)\geq(1-21n^{-1/5})n$. 
		
	We now apply Lemma \ref{glue} repeatedly to $\Path$ with the expander $H$ constructed above, recall $b=n^{4/5}$, $r\leq n^{2/5}$ and set $m=n^{2/5}$. All in all, we apply the Lemma $n^{3/5}$ times. After each application, we delete all the edges from $H$ that have the same colour as $e_1,e_2$ or $e_3$. After $i\leq n^{3/5}$ applications of Lemma \ref{glue} each vertex  $v\in H$ has in-degree at least $5.5n^{4/5}-3i\geq 5.5n^{4/5}-3n^{3/5}\geq 5 b$. Moreover, for any two disjoint sets $A,B\subset V(G)$ of size $|A|=|B|=b$, we have, after the $i$-th application, $d_H(A,B)\geq(1-o(1))b^2p-3bi\geq 2n^{7/5}\geq 2b+1$. So we actually may apply Lemma \ref{glue} $n^{3/5}$ times. At each application, we have either that already $|P_1|\geq V(\Path)-2b\geq (1-23n^{-1/5})n$ or that the length of $P_1$ increases by $m$. Since $m\cdot n^{3/5}=n$ we must have the first of these two cases before the last iteration. We have thus constructed a rainbow path $P_1$ with $|P_1|\geq(1-23n^{-1/5})n$. In order to turn $P_1$ into a cycle, observe that between the last $b$ and the first $b$ vertices on $P_1$ there is an edge that lies in $H$ (or rather what is left of $H$). We thus obtain a cycle with at least $(1-25n^{-1/5})n$ vertices as desired.

	\section{Rainbow Trees}\label{RT}
	In this section we prove Theorem \ref{TRT}.	We start with a brief outline of the proof. Let $T$ be a tree on $n$ vertices with maximum degree $\Delta(T)\leq \maxd n/ \log n$ and let $c$ be a globally $\glo n$-bounded, proper colouring of $K_n$. We will find a rainbow embedding of $T$ into $K_n$ in two steps. Denote by $R$ the $n/4$ vertices of $T$ with highest degrees and set $W=V(T)\setminus R$. In the first step we will find a rainbow embedding of $R$ into $K_n$ in a random-greedy fashion. More precisely, we will find a subset $B\subset V(K_n)$ and a bijection $\pi : R\rightarrow B$ so that $\pi\left(T[R]\right)=:T_0$ is rainbow. We also write $A=V(K_n)\setminus B$. We will make sure that this partial embedding $\pi$ of $T$ will satisfy sufficiently nice properties, so that we can carry out the second step of our embedding: We will show that a uniformly random bijection $\tau : W\rightarrow A$ gives rise to a rainbow embedding of $T$. More precisely, the embedded tree $(\pi\cup\tau)(T)$ will be rainbow with positive probability. For this part our main tool is a version of the Local Lemma due to Lu and Sz\'ekely in \cite{LS}.

	Before giving precise descriptions of these two steps in the next sections we explain what the `nice' properties of the partial embedding $\pi$ are and give some intuition why they help us in the second step. To this end we need some notation.
	For an edge $e\in K_n$ denote by $c(e)$ its colour and for a subgraph $H$ of $K_n$ denote by $c(H)$ the set of all colours appearing in $H$. For a colour $f\in c(K_n)$ and a vertex $a\in K_n$ we say that $f$ is present at $a$ if there is an edge incident to $a$ which has colour $f$.
	
	Recall that we aim for a `nice'  rainbow embedding of $T[R]$, i.e. a bijection $\pi : R\rightarrow B$ so that $\pi\left(T[R]\right)=:T_0$ is rainbow. 
	For such a embedding we define values $w_f, w_g, m_g$ for each $f\in c(K_n)$ and each $g\in V(K_n)$ below. The embedding $\pi$ will be `nice' if all these quantities are small. 
	
	For $f\in c(K_n)$, we define the weight $w_f$ of $f$ to be
	\begin{eqnarray*}
		w_f = \sum_{\substack{b\in B \\ f\text{ present at } b}} d_T\left(\pi^{-1}(b)\right).
	\end{eqnarray*}
	Intuitively, this is a rough measure of how likely the colour $f$ is to be used by the embedding $(\pi\cup\tau)$ (where $\tau$ is a uniformly random bijection from $W$ to $A$). If $f$ is present at some vertex $b\in B$, so that $\pi^{-1}(b)$ has a high degree in $T$, then it is more likely to be used in the embedding $\pi\cup\tau$.
	
	For $g\in V(K_n)$, set $w_g = 0$ if $g\in B$. Otherwise $g\in A$ and we let
	\begin{eqnarray*}
		w_g = \sum_{\substack{u\in R \\c(g\pi(u))\in c(T_0) } } d_T(u).
	\end{eqnarray*}
	This quantity roughly measures (in fact it uses a union bound) how likely we are to embed an `unsuitable' vertex $w\in W$ onto $\tau(w)=g$. $w$ is `unsuitable' for $g$, if there is a vertex $u\in R$ so that $uw\in T$ and so that the colour of the edge $\pi(u)g$ was already used in $c(T_0)$.
	
	Finally, for $g\in V(K_n)$ we define $m_g$ to be  
	\begin{equation*}
	m_g = \left\vert\{a\in A :c(ag)\in c(T_0)\}\right\vert.
	\end{equation*}
	This value simply counts how many colours incident to $g$ were already used in $T_0$ and therefore should not be used again when using $\tau$ to extend the embedding.

	We now state our two main Lemmas describing the results of the first and second step of our embedding.
	\begin{Lemma}\label{RGEL}
		Let $T$ be a tree on $n$ vertices with maximum degree at most $\maxd n/\log n$ and let $c$ be a proper, globally $\glo n$-bounded edge colouring of $K_n$.\\
		Let $\aon,\atw, \ath > 0$ satisfy the following relations: 
		\begin{equation}\label{eq1}
		16\glo \leq \aon,\quad 16\atw\leq \ath, \quad 32\aon\leq \ath\quad \text{ and }\quad \aon^2>32\maxd,\quad  \atw^2>64\maxd,\quad \ath^2> 64\maxd.
		\end{equation}
		Assume moreover, that $\bad,\nbad>0$ satisfy 
		\begin{equation}\label{eq2}
		\bad\nbad\geq 2\glo, \quad 16\nbad \leq \atw, \quad 2\bad \leq \atw.
		\end{equation}
		Then there is a rainbow embedding $\pi$ of the $n/4$ vertices of $T$ which have highest degrees, so that the partial embedding $\pi$  satisfies $w_f\leq \aon n$, $m_g\leq \atw n$ and $w_g\leq \ath n$ for all colours $f\in c(K_n)$ and all vertices $g\in V(K_n)$.
	\end{Lemma}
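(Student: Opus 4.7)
The plan is to construct $\pi$ via a random-greedy embedding of $R$ and then verify the three threshold bounds by expectation-plus-concentration.

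First, order the vertices of $R$ as $u_1,\ldots,u_{|R|}$ along a BFS traversal of each component of the forest $T[R]$, so that every $u_i$ has at most one earlier neighbour $p(u_i)$ in $T[R]$. Define $\pi$ step by step: at step $i$ choose $\pi(u_i)$ uniformly at random among the vertices $v\in V(K_n)\setminus\pi(\{u_1,\ldots,u_{i-1}\})$ such that, whenever $p(u_i)$ is defined, the colour $c(\pi(p(u_i))v)$ does not yet appear in the partial palette $c\bigl(\pi(T[\{u_j:j<i\}])\bigr)$. Because at most $(i-1)+\Delta_T\leq n/4+\maxd n/\log n<n/2$ vertices are forbidden at each step, the process never stalls and the resulting $T_0=\pi(T[R])$ is rainbow by construction.

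Next I would control the expectations of $w_f$, $m_g$ and $w_g$. Using $\sum_{u\in R}d_T(u)\leq 2(n-1)$, the fact that each colour of $K_n$ is present at at most $2\glo n$ vertices (by the global $\glo n$-bound), that $|c(T_0)|\leq n/4$, and that each step chooses essentially uniformly among $\Omega(n)$ vertices, routine counting gives $\E[w_f]\leq 8\glo n$ and $\E[m_g],\E[w_g]=O(\glo n)$. The hypotheses $16\glo\leq\aon$, $32\aon\leq\ath$ and their analogues make the targets $\aon n,\atw n,\ath n$ a constant factor larger than these expectations, leaving room for fluctuation.

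To convert this into a high-probability statement I would run the vertex-exposure martingale on the filtration $\mathcal{F}_i=\sigma(\pi(u_1),\ldots,\pi(u_i))$ (couple the residual randomness across different values of $\pi(u_i)$ so that later greedy choices are tied together) and apply Azuma--Hoeffding. Changing a single choice $\pi(u_i)$ perturbs each of the three quantities by $O(d_T(u_i))$: for $w_f$ and $w_g$ this is immediate from their definitions, and for $m_g$ it follows because only $d_{T[R]}(u_i)\leq\Delta_T$ colours of $c(T_0)$ can change, each contributing at most $1$ to $m_g$. Hence $\sum_i c_i^2\leq\Delta_T\sum_i d_T(u_i)\leq 2\maxd n^2/\log n$, and Azuma gives tails of the shape $\exp\bigl(-\Omega(\aon^2\log n/\maxd)\bigr)$. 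The inequalities $\aon^2>32\maxd$, $\atw^2>64\maxd$ and $\ath^2>64\maxd$ render these $o(n^{-2})$, which beats the union bound over the $\leq n^2$ colours and $\leq n$ vertices and produces a simultaneously good embedding.

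Finally, the roles of $\bad,\nbad$ and the main obstacle. The delicate bounds are those on $\E[m_g]$ and $\E[w_g]$, which depend on the random palette $c(T_0)$ and hence on the entire embedding; this creates a two-layer dependence that a naive expectation calculation cannot handle. My approach would be to split the relevant sums according to whether a colour is \emph{common} (present at $\geq\nbad n$ vertices) or \emph{rare}, and whether a vertex $u\in R$ is \emph{heavy} ($d_T(u)\geq\bad n$) or \emph{light}. The ``heavy $\times$ common'' cross-term is bounded by a double-counting argument combining $\bad\nbad\geq 2\glo$ with the global $\glo n$-bound (the threshold inequality is exactly designed to balance these two scales), while $16\nbad\leq\atw$ and $2\bad\leq\atw$ control the rare and light pieces separately, each below $\atw n/2$. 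Making this case split compatible with the martingale concentration above, i.e.\ keeping the bounded-difference constants small while decoupling the random palette $c(T_0)$ from the random endpoint $\pi(u)$, is in my view the principal technical obstacle of the lemma.
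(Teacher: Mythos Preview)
Your overall framework---random-greedy embedding of $R$ followed by Azuma-type concentration---is exactly what the paper does, and your handling of $w_f$ is essentially right. But two of your central claims do not go through as stated, and the paper's argument is organised quite differently at precisely those points.

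\medskip
\textbf{The bounded-difference claim fails for $w_g$.} If you alter $\pi(u_i)$ you alter up to $d_{T[R]}(u_i)$ colours of the palette $c(T_0)$. Each such colour $f$ may equal $c(g\,\pi(u_j))$ for some other index $j$, and then the summand $d_T(u_j)$ in $w_g$ toggles. Since $d_T(u_j)$ can be as large as $\Delta_T=\beta n/\log n$, the Lipschitz constant is not $O(d_T(u_i))$ but $O(d_T(u_i)\,\Delta_T)$; this makes $\sum_i c_i^2$ of order $\beta^3 n^4/\log^3 n$ and kills the Azuma bound. The same palette feedback undermines the ``couple the residual randomness'' idea for $m_g$: changing $\pi(u_i)$ perturbs the feasible set at every later step, so one cannot simply freeze later choices. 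The paper avoids vertex-exposure martingales altogether; it writes each quantity as a sum $\sum_k X_k$ with $X_k\in\{0,d_k\}$ measurable with respect to $\mathcal F_k$, bounds $\Pr[X_k=d_k\mid\mathcal F_{k-1}]$ uniformly in the history, and applies Azuma to that sum directly.

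\medskip
\textbf{The role of $\gamma,\delta$ is misidentified.} They are not thresholds on colour frequency or on $d_T(u)$. For a fixed target vertex $g$ the paper calls a host vertex $b$ \emph{bad} if $ov(b):=\lvert\{f:f\text{ present at }g\text{ and at }b\}\rvert>\gamma n$; since $\sum_b ov(b)\leq 2\alpha n^2$, the condition $\gamma\delta\geq 2\alpha$ forces at most $\delta n$ bad vertices. The bound on $m_g$ then splits step $k$ according to whether $b_k=\pi(u_k)$ is bad (this piece is $\leq d_k$ and is handled by $16\delta\leq\alpha_2$ via Azuma) or good (then each forward $T_0$-edge out of $b_k$ lands on a colour present at $g$ with conditional probability $\leq 2\gamma$, and $2\gamma\leq\alpha_2$ closes this piece via a Chernoff/coupling argument). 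Your ``routine counting gives $\E[m_g]=O(\alpha n)$'' is not routine: without the overlap split a union bound over edges of $T_0$ only gives $\E[m_g]=O(n)$.

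\medskip
\textbf{The $w_g$ argument is where properness is really used.} The paper first conditions on the already-established events $\{m_g\leq\alpha_2 n\}$ and $\{w_f\leq\alpha_1 n\text{ for all }f\text{ present at }g\}$, and then splits the sum defining $w_g$ according to whether the colour $c(gb_k)$ was in $c(T_0)$ \emph{before} step $k$ (controlled by the $m_g$ bound, giving the constraint $16\alpha_2\leq\alpha_3$) or enters $c(T_0)$ at some \emph{later} step (controlled, via a coupling with independent $\{0,d_j\}$-variables, by the $w_f$ bounds, giving $32\alpha_1\leq\alpha_3$). The dichotomy is exhaustive only because $c(gb_k)$ cannot be the colour added \emph{at} step $k$ itself unless $g=\pi(p(u_k))$; this is the one place where local $1$-boundedness is essential and is exactly the step that would fail for locally $3$-bounded colourings. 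Your outline does not touch this two-layer dependence, and the vertex-exposure approach cannot substitute for it because of the Lipschitz issue above.
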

	
	\begin{Lemma}\label{LLLS}
		Let $0<\roots,\aio,\glo< 1\leq \degree$ so that 
		\begin{equation}\label{sum2}
		16(1-r)^{-3}	\cdot \Bigl(\aio(3D + 11) + \glo(16D + 12)\Bigr)\leq 1.
		\end{equation}
		Assume that $c$ is a proper colouring of $K_n$ which is globally $\glo n$-bounded. Let $T$ be a tree on $n$ vertices, which are partitioned into $V(T)=W\cup R$. Assume also that $\pi : R\rightarrow B\subset V(K_n)$ is a rainbow embedding of $T[R]$ and write $A=V(K_n)\setminus B$.\\ 		
		Suppose that 
		$$|R|\leq \roots n \qquad\text{ and }\qquad \forall w\in W:\:d_T(w)\leq D.$$ 
		Further suppose that for all $g\in V(K_n)$ and for all $f\in c(K_n)$, we have 
		$$w_g \leq \aio n,\quad m_g\leq \aio n,\quad  w_f\leq \aio n.$$ 
		Then, for $n$ sufficiently large, there is a bijection $\sigma : W\rightarrow A$ extending $\pi$ so that $(\pi\cup\sigma)(T)$ is rainbow.
	\end{Lemma}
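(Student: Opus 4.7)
The plan is to sample $\sigma : W \to A$ uniformly at random and apply the Lovász Local Lemma for random bijections developed by Lu and Sz\'ekely \cite{LS}. Since $\pi$ already embeds $T[R]$ as a rainbow subgraph, $(\pi\cup\sigma)(T)$ fails to be rainbow precisely when some two edges of $T$, not both in $T[R]$, are sent to edges of $K_n$ that share a colour. For each potential such colour collision we define a bad event; each bad event is a \emph{canonical} event in the Lu--Sz\'ekely sense, determined by the images under $\sigma$ of at most four vertices of $W$. The goal is to verify the hypotheses of the Lu--Sz\'ekely Local Lemma, from which it follows that with positive probability no bad event occurs, yielding the desired rainbow extension.

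The edges of $T$ not in $T[R]$ split naturally into those with one endpoint in $R$ and one in $W$ (call them $RW$-edges) and those with both endpoints in $W$ ($WW$-edges). Bad events accordingly fall into five types, corresponding to whether each of the two colliding edges is a $T[R]$-edge, an $RW$-edge, or a $WW$-edge (up to symmetry). A bad event is determined by fixing the images of $k\le 4$ vertices in $W$, and its probability equals $\prod_{i=0}^{k-1}(|A|-i)^{-1} \le (1-\roots)^{-k}n^{-k}(1+o(1))$; the bound $|A|\ge (1-\roots)n$ is what gives rise to the factor $(1-\roots)^{-3}$ in hypothesis \eqref{sum2}.

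To apply the Local Lemma we must bound, for each bad event $E$, the aggregate weight of bad events $E'$ that are incompatible with $E$, i.e.\ that share a domain element (a vertex of $W$) or an image (a vertex of $A$) with $E$. This is exactly where the hypotheses on $w_f$, $w_g$, and $m_g$ come in. Collisions controlled by a fixed $T_0$-colour $f$ occurring at $\pi(R)$ contribute at most $w_f\le \aio n$ per vertex of $W$ in $E$; collisions routed through a fixed target $g\in A$ contribute at most $w_g + m_g \le 2\aio n$; collisions whose shared colour is outside $c(T_0)$ are bounded through the global colour-class bound $\glo n$; and at each stage the degree bound $d_T(w)\le \degree$ multiplies in the tree-degree of any $W$-vertex involved. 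Summing these contributions across all five types and the at most four ``sockets'' of the canonical event gives a total weighted dependency of at most $\bigl(\aio(3\degree+11)+\glo(16\degree+12)\bigr)n$, and together with the probability factor $(1-\roots)^{-3}n^{-k}$ and the standard Lu--Sz\'ekely slack of $16$, this is exactly the Local Lemma condition \eqref{sum2}.

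The main obstacle is the careful bookkeeping needed to enumerate and estimate the dependencies, especially for events involving $WW$-edges, where up to four random images play a role and every incompatible bad event must be charged either to a $T_0$-colour (using $w_f$), to a vertex of $A$ (using $w_g$ or $m_g$), or to a generic colour class (using $\glo n$). Once this accounting produces the coefficients $3\degree+11$ and $16\degree+12$, the Lu--Sz\'ekely Local Lemma applies verbatim and furnishes, for $n$ sufficiently large, a bijection $\sigma : W\to A$ such that $(\pi\cup\sigma)(T)$ is rainbow, as desired.
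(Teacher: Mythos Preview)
Your proposal follows essentially the same approach as the paper: sample $\sigma$ uniformly, define the five types of canonical bad events (the paper's $\textit{FR}$, $F$, $\textit{SRR}$, $\textit{SR}$, $S$) corresponding to the possible pairs of colliding edge-types, and verify the Lu--Sz\'ekely Local Lemma hypothesis by bounding, for each socket $x\in W\cup A$ of a bad event, the total probability of bad events containing $x$. The paper carries out exactly the bookkeeping you describe as ``the main obstacle'' via five separate claims (one per event type, each handling the $x\in W$ and $x\in A$ cases), and these are precisely what produce the coefficients $3\degree+11$ and $16\degree+12$; your outline is correct but those case-by-case counts are the entire substance of the proof and are not actually performed here.
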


	In Section \ref{tools} we give the precise statements of our two main tools, namely the Azuma-Hoeffding concentration inequality and the Local Lemma in the framework of Lu and Sz\'ekely. In Sections \ref{RGE} and \ref{LLL} we prove the Lemmas \ref{RGEL} and \ref{LLLS} respectively. We combine these two Lemmas to give the proof of Theorem \ref{TRT} in Section \ref{TRP}.

	\subsection{Tools}\label{tools}
	In Section \ref{RGE} we will be concerned with certain random variables and showing that they are concentrated around their expected value. To this end we will use the well known Azuma-Hoeffding inequality due to Azuma \cite{Azuma} and Hoeffding \cite{Hoef}. We will use the following special case: 
	\begin{Theorem}\emph{(Azuma-Hoeffding inequality)}
		Suppose $Y_1,\ldots,Y_n$ are random variables, so that $Y_i$ takes the value $0$ or $d_i$ and so that $Y_i$ is mutually independent of $Y_1,\ldots, Y_{i-1}$. Then, writing $Y=Y_1+\ldots+Y_n$ and $\sigma = \sum_{i=1}^n d_i^2$, we have for any $t>0$ that 
		\begin{equation*}
		\Pr\bigl[Y-\E[Y]>t\bigr]\leq e^{-t^2/2\sigma}.
		\end{equation*}
	\end{Theorem}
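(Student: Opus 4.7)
The plan is to prove the inequality by the exponential moment (Chernoff) method, since the hypothesis that each $Y_i$ is independent of $Y_1,\ldots,Y_{i-1}$ amounts, by induction on $i$, to full joint independence of $Y_1,\ldots,Y_n$. Concretely, for any $\lambda>0$ Markov's inequality applied to the nonnegative random variable $e^{\lambda(Y-\E[Y])}$ gives
\begin{equation*}
\Pr\bigl[Y-\E[Y]>t\bigr]\leq e^{-\lambda t}\,\E\bigl[e^{\lambda(Y-\E[Y])}\bigr],
\end{equation*}
and by joint independence of the centred summands $X_i:=Y_i-\E[Y_i]$ the moment generating function factorises as $\E[e^{\lambda(Y-\E[Y])}]=\prod_{i=1}^{n}\E[e^{\lambda X_i}]$.

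The heart of the argument is to bound each factor $\E[e^{\lambda X_i}]$ via a Hoeffding-type lemma. Since $Y_i\in\{0,d_i\}$, the centred variable $X_i$ has mean zero and satisfies $|X_i|\leq d_i$. Using convexity of $x\mapsto e^{\lambda x}$ on $[-d_i,d_i]$, I would write
\begin{equation*}
e^{\lambda x}\leq \frac{d_i-x}{2d_i}e^{-\lambda d_i}+\frac{d_i+x}{2d_i}e^{\lambda d_i}\qquad\text{for }x\in[-d_i,d_i],
\end{equation*}
take expectation using $\E[X_i]=0$, and obtain $\E[e^{\lambda X_i}]\leq \cosh(\lambda d_i)$. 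A term-by-term comparison of the Taylor series of $\cosh(u)$ and $e^{u^2/2}$ (using $(2k)!\geq 2^k k!$) then yields $\cosh(\lambda d_i)\leq e^{\lambda^2 d_i^2/2}$.

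Combining these bounds across $i$ gives $\E[e^{\lambda(Y-\E[Y])}]\leq e^{\lambda^2\sigma/2}$, so
\begin{equation*}
\Pr\bigl[Y-\E[Y]>t\bigr]\leq e^{-\lambda t+\lambda^2\sigma/2}.
\end{equation*}
The last step is to optimise in $\lambda>0$: differentiating the exponent, the minimum is attained at $\lambda=t/\sigma$, whereupon the right-hand side becomes $e^{-t^2/(2\sigma)}$, which is exactly the claimed bound. The main (very mild) obstacle is purely bookkeeping: verifying the convexity/Taylor step cleanly and ensuring that the stated ``mutually independent of $Y_1,\ldots,Y_{i-1}$'' hypothesis is strong enough to justify the product formula for the joint moment generating function, which it is once one reads it as independence from the tuple $(Y_1,\ldots,Y_{i-1})$ and proceeds by induction on $n$.
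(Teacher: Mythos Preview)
Your proof is correct and is the standard exponential-moment argument for Hoeffding's inequality; there is nothing to compare it against, since the paper does not prove this statement at all but merely quotes it as a known tool with references to Azuma and Hoeffding. One minor remark: since $Y_i\in\{0,d_i\}$ the centred variable $X_i$ actually lies in an interval of length $d_i$, so the sharper Hoeffding lemma would give $\E[e^{\lambda X_i}]\le e^{\lambda^2 d_i^2/8}$ and hence the stronger bound $e^{-2t^2/\sigma}$; your use of the cruder interval $[-d_i,d_i]$ is perfectly adequate for the stated conclusion, however.
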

	In Section \ref{LLL} we will make use of the Lov\'asz Local Lemma. It is a powerful tool for probabilistic existence proofs established by Erd\H{o}s and Lov\'asz \cite{orLLL}. 
	Erd\H{o}s and Spencer observed that the statement of the Local Lemma could be generalised and formulated the so called \textit{Lopsided Local Lemma} \cite{LopLLL}, see e.g. \cite[Chapter 19.3]{MolReed} for a precise statement and proof. The work of Lu and Sz\'ekely \cite{LS} makes use of the Lopsided Local Lemma by finding a \textit{negative dependency digraph} of a certain set of events in the probability space of random injections. A combination of their main result in \cite{LS} and the Lopsided Local Lemma gives Theorem \ref{LocalS} below. To make its statement precise, we start with some definitions.	
	
	Suppose we are given finite sets $X$ and $Y$ of the same cardinality and consider the probability space $\Omega$ given by picking a bijection $\sigma:X\rightarrow Y$ uniformly at random from all such bijections, denote the set of these bijections by $\mathcal{S}$. Let $\tau : T\rightarrow U$ be a given bijection between two sets $T\subset X$ and $U\subset Y$. The corresponding \textit{canonical event} $\Omega(T,U,\tau)$ consists of all bijections $\sigma: X\rightarrow Y$ extending $\tau$, i.e.
	\begin{equation*}
	\Omega(T,U,\tau) =\{\sigma\in \mathcal{S} : \sigma(x)=\tau(x)\text{ for all }x\in T\}.
	\end{equation*}
	We say that two events $\Omega(T_1,U_1,\tau_1)$ and $\Omega(T_2,U_2,\tau_2)$ $\mathcal{S}$-\textit{intersect} if $T_1$ and $T_2$ intersect or $U_1$ and $U_2$ intersect and write $\Omega(T_1,U_1,\tau_1)\sim \Omega(T_2,U_2,\tau_2)$. 
	\begin{Theorem}\label{LocalS}
		\emph{(Asymmetric Lopsided Lov\'asz Local Lemma in the framework of Lu and Sz\'ekely \cite{LS})} 		
		Let $\mathcal{B}$ be a collection of canonical events. Then, with positive probability none of the events $B\in\mathcal{B}$ occurs, provided that for all $B\in \mathcal{B}$ it holds that
		\begin{equation*}
		\sum_{\substack{B'\in\mathcal B\\ B'\sim B}} \Pr[B'] \leq \frac{1}{4}.
		\end{equation*}
		
	\end{Theorem}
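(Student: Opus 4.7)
The plan is to derive Theorem~\ref{LocalS} by combining two ingredients: (a) the theorem of Lu and Sz\'ekely \cite{LS} that, in the probability space of uniformly random bijections $\sigma:X\to Y$, the $\mathcal{S}$-intersect relation $\sim$ defines a \emph{negative dependency digraph} on the family of canonical events; and (b) the asymmetric form of the Lopsided Lov\'asz Local Lemma, which produces $\Pr[\bigcap_{B\in\mathcal{B}}\overline{B}]>0$ as soon as a suitable summable condition holds with respect to any such digraph.

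For (a), the content to verify is that for any canonical event $B_0=\Omega(T_0,U_0,\tau_0)$ and any family $\mathcal{F}$ of canonical events none of which $\mathcal{S}$-intersects $B_0$, one has
\[
\Pr\Bigl[B_0 \,\Bigm|\, \bigcap_{B\in\mathcal{F}}\overline{B}\Bigr] \;\leq\; \Pr[B_0].
\]
I would prove this by a switching argument: for each bijection $\sigma\in\bigcap_{B\in\mathcal{F}}\overline{B}$ with $\sigma\notin B_0$, construct an injection into $B_0\cap\bigcap_{B\in\mathcal{F}}\overline{B}$ by composing $\sigma$ with the product of transpositions in $Y$ that realises $\tau_0$ on $T_0$. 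The hypothesis that the supports $(T_B,U_B)$ of every $B\in\mathcal{F}$ are disjoint from $(T_0,U_0)$ is exactly what guarantees that the modified bijection still fails to extend any $\tau_B$, so the injection is well-defined and the desired one-sided inequality follows.

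For (b), I would then invoke the asymmetric Lopsided LLL in its usual form: if $\mathcal{B}$ carries a negative dependency digraph and there exist weights $x_B\in(0,1)$ with $\Pr[B]\leq x_B\prod_{B'\sim B}(1-x_{B'})$ for each $B$, then $\Pr[\bigcap_{B\in\mathcal{B}}\overline{B}]>0$. Choosing $x_B=2\Pr[B]$ and using the elementary bound $1-t\geq 4^{-t}$ valid for $t\in[0,1/2]$, the required weighted inequality is implied by $\sum_{B'\sim B}\Pr[B']\leq 1/4$, which is exactly the hypothesis of Theorem~\ref{LocalS}.

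The main obstacle is step~(a): ensuring that the switching yields a genuine one-sided inequality against \emph{arbitrary} conditioning families $\mathcal{F}$ (so that $\sim$ really defines a negative dependency digraph, rather than merely a pairwise negative correlation) requires the careful inclusion-exclusion carried out in \cite{LS}. Everything else is a black-box application of standard Local Lemma machinery, with the constant $1/4$ emerging only from the routine conversion of the weighted condition into the cleaner sum condition stated in the theorem.
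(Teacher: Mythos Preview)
Your sketch is correct and matches the paper's treatment: the paper does not prove Theorem~\ref{LocalS} at all but simply states that it follows by combining the main result of Lu and Sz\'ekely~\cite{LS} (your step~(a)) with the Lopsided Local Lemma of Erd\H{o}s and Spencer~\cite{LopLLL} (your step~(b)). Your elaboration of how the constant $1/4$ arises from the weighted asymmetric LLL via the choice $x_B=2\Pr[B]$ and the bound $1-t\geq 4^{-t}$ on $[0,1/2]$ is a valid and standard way to make that combination explicit.
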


	\subsection{Proof of Lemma \ref{RGEL}}\label{RGE}
	\begin{proof}
		The overall strategy of our proof is simple. We will embed the $n/4$ high degree vertices in a random-greedy fashion and show that the desired properties hold with high probability. We first describe the random-greedy embedding, which we will refer to as RGE from now on. 
		
		Denote the vertices of $T$ by $v_1,\ldots,v_n$ so that (i) $v_1,\ldots,v_{n/4}$ are those vertices of $T$ with the highest degrees (splitting ties arbitrarily) and so that (ii) for each $i\in[n/4]$ there is at most one $1\leq j<i$ so that $v_j v_i$ is an edge of $T$. Condition (ii) can be satisfied since the $n/4$ vertices with highest degrees of $T$ span a forest, which is $1$-degenerate. Sticking with the notation introduced previously, we set $R = \{v_1,\ldots,v_{n/4}\}$, $W=\{v_{n/4+1},\ldots,v_n\}$. 
		
		Moreover, write $d_i = d_T(v_i)$ for $i=1,\ldots,n$. For distinct vertices $b_1,\ldots,b_k \in K_n$ we denote by $T[b_1,\ldots,b_k]$ the subgraph of $K_n$ in which $b_ib_j$ is an edge precisely if $v_iv_j$ is an edge of $T$. 
		
		We now describe the random-greedy embedding: We will perform $n/4$ steps of the following form. For the $k$-th step, suppose that we have already chosen distinct $b_1,\ldots,b_{k-1}\in K_n$ so that $T[b_1,\ldots,b_{k-1}]$ is rainbow. We say that a vertex $b\in K_n$ is feasible if (i) $b\notin\{b_1,\ldots,b_{k-1}\}$ and (ii) the subgraph $T[b_1,\ldots,b_{k-1},b]$ is rainbow. Out of the feasible vertices we choose one vertex $b_k$ uniformly at random (and independently of previous choices) and then continue with the $(k+1)$-th step. It is clear that, if successful, RGE produces a rainbow subgraph $T[b_1,\ldots,b_{n/4}]$. We will not only show that RGE is successful, but that in each step, there are at least $n/2$ feasible vertices, making the embedding sufficiently random for our purposes.
		
		\begin{Claim}
			At each of its $n/4$ steps, RGE has at least $n/2$ feasible choices.
		\end{Claim}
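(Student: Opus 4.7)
The plan is a direct double-counting of the vertices that fail to be feasible at step $k$. At step $k \leq n/4$, a vertex $b \in V(K_n)$ is infeasible for one of two reasons: either $b \in \{b_1, \ldots, b_{k-1}\}$ (already used), or the rainbow-ness of $T[b_1, \ldots, b_{k-1}, b]$ fails, which happens only if placing $b_k = b$ creates a new edge whose colour already appears in $T[b_1, \ldots, b_{k-1}]$. I will bound each source separately and check their sum is strictly less than $n/2$.

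The first source contributes at most $k - 1 \leq n/4 - 1$ forbidden vertices, trivially. For the second, the key point is that by construction of the ordering (condition (ii)), $v_k$ has at most one earlier neighbour in $T[R]$, i.e.\ at most one index $j < k$ with $v_j v_k \in E(T)$. If there is no such $j$, then $T[b_1, \ldots, b_{k-1}, b] = T[b_1, \ldots, b_{k-1}]$ has the same edge set for every $b$, so no $b$ is ruled out for the second reason. If there is such a $j$, then the only new edge added by choosing $b_k = b$ is $b_j b$, and $b$ is infeasible precisely when $c(b_j b)$ lies in the (at most $k - 1$ element) colour set $c(T[b_1, \ldots, b_{k-1}])$.

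Here we invoke properness of $c$: at the fixed vertex $b_j$, each colour is incident to at most one edge, so for each forbidden colour there is at most one $b$ with $c(b_j b)$ equal to that colour. Hence the second source contributes at most $|c(T[b_1, \ldots, b_{k-1}])| \leq k - 1 \leq n/4 - 1$ forbidden vertices. Adding the two bounds gives at most $2(n/4 - 1) = n/2 - 2$ infeasible vertices, so at least $n/2 + 2 > n/2$ feasible ones remain.

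The only thing to justify beyond this counting is why condition (ii) on the ordering is attainable, and this is already noted in the setup: $T[R]$ is a subgraph of the tree $T$, hence a forest, hence $1$-degenerate, so a greedy peeling of leaves in reverse yields such an ordering of $R$. There is no real obstacle here; the claim is essentially a clean consequence of properness together with the $1$-degeneracy of $T[R]$.
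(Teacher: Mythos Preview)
Your proof is correct and follows essentially the same approach as the paper: bound the already-used vertices by $k-1$, use the $1$-degeneracy of $T[R]$ to get at most one earlier neighbour $v_j$, and then use properness at $b_j$ to bound the colour-forbidden vertices by $|c(T[b_1,\ldots,b_{k-1}])|$. Your bound is in fact slightly sharper ($n/2+2$ feasible vertices rather than $n/2$), but the argument is identical in structure.
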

		\begin{proof}
			At the $k$-th step of RGE, there are at most $k-1\leq n/4$ vertices $b$ that violate (i) $b\notin\{b_1,\ldots,b_{k-1}\}$. To bound how many vertices $b$ violate (ii) $T[b_1,\ldots,b_{k-1},b]$ being rainbow, observe the following: $v_k$ has at most one neighbor $v_j$ in $\{v_1,\ldots,v_{k-1}\}$. If $v_k$ has no such neighbor, then any vertex $b\in K_n\setminus\{b_1,\ldots,b_{k-1}\}$ will make $T[b_1,\ldots,b_{k-1},b]$ rainbow since $T[b_1,\ldots,b_{k-1},b]$ consists of the same set of edges as $T[b_1,\ldots,b_{k-1}]$. Otherwise, let $v_j$ be the unique neighbor of $v_k$ with $j<k$. Then the only vertices $b$ that will not lead to a rainbow copy of $T[b_1,\ldots,b_{k-1},b]$ are those with $c(b_j b)\in c(T[b_1,\ldots,b_{k-1}])$. Since the colouring $c$ is proper, all colours present at $b_j$ are distinct and there are at most $|c(T[b_1,\ldots,b_{k-1}])|\leq e(T[b_1,\ldots,b_{k-1}])\leq n/4$  vertices $b$ with $c(b_jb)\in c(T[b_1,\ldots,b_{k-1}])$. So all in all, there are at most $n/4+n/4 = n/2$ vertices in $K_n$ which are not feasible, leaving at least $n/2$ feasible vertices. 
		\end{proof}
		
		This finishes the description of RGE and we are now ready to prove Lemma \ref{RGEL}. We need to prove the three inequalities $w_f\leq \aon n$, $m_a\leq \atw n$ and $w_a\leq \ath n$. They will be derived in a standard way: We first bound the expectation of the given random variable, prove tight concentration using the Azuma-Hoeffding inequality and finally apply a union bound over all colours/vertices. This is where we will use our conditions that the colouring $c$ is proper and globally $\glo n$-bounded as well as the assumption that $\Delta(T)\leq \maxd n/ \log n$. Observe that the latter means $d_i\leq \maxd n/\log n$ for $i=1,\ldots, n$ and that we have $\sum_{i=1}^n d_i < 2n$. We use this to bound  $\sum_{i=1}^n d_i^2$. By standard convexity arguments, this sum is maximised if $2\log n/\maxd$ of the $d_i$ take the largest possible value $\maxd n/\log n$, so that
		\begin{eqnarray}\label{sqsu}
		\sum_{i=1}^n d_i^2 < \frac{2\log n}{\maxd} \left(\frac{\maxd n}{\log n}\right)^2 = \frac{2\maxd n^2}{\log n}.
		\end{eqnarray}
		The three following claims are concerned with bounding the expectation and showing concentration of $w_f,m_a,w_a$.
		\begin{Claim}\label{colw}
			For every colour $f\in c(K_n)$ we have $\Pr[w_f \geq \aon n]=o\left(n^{-2}\right)$.
		\end{Claim}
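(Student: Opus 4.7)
The plan is to write $w_f$ as a sum of random variables coming from the steps of RGE, bound its expectation using the proper and globally bounded assumptions, and then apply Azuma--Hoeffding after coupling to independent variables.

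First I would set up the random variables. For $i = 1, \ldots, n/4$, let $Y_i = d_i \cdot \mathbb{1}[f\text{ is present at } b_i]$, so that $Y_i \in \{0, d_i\}$ and $w_f = \sum_{i=1}^{n/4} Y_i$. Since $c$ is globally $\gamma n$-bounded, there are at most $\gamma n$ edges of colour $f$ in $K_n$, so $f$ is present at at most $2\gamma n$ vertices. Combined with the preceding claim that RGE always has at least $n/2$ feasible choices, this gives that for any history, the conditional probability that $f$ is present at $b_i$ is at most $\frac{2\gamma n}{n/2} = 4\gamma$. Hence $\E[Y_i \mid Y_1, \ldots, Y_{i-1}] \leq 4\gamma d_i$, and summing yields
$$\E[w_f] \leq 4\gamma \sum_{i=1}^{n/4} d_i \leq 4\gamma \cdot 2n = 8\gamma n \leq \tfrac{1}{2}\aon n,$$
using the hypothesis $16\gamma \leq \aon$.

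Next I would upgrade the conditional bound to a stochastic domination so that the independence form of Azuma--Hoeffding applies. Concretely, using one fresh uniform $[0,1]$ random variable per step of RGE, one can couple $(Y_i)$ to independent random variables $(Y_i')$ with $Y_i' \in \{0, d_i\}$ and $\Pr[Y_i' = d_i] = 4\gamma$, such that $Y_i \leq Y_i'$ pointwise. Writing $w_f' = \sum_{i=1}^{n/4} Y_i'$, we have $\E[w_f'] \leq 8\gamma n \leq \tfrac{1}{2}\aon n$ and $w_f \leq w_f'$, so it suffices to bound $\Pr[w_f' \geq \aon n]$.

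Finally I would apply the Azuma--Hoeffding inequality from Section \ref{tools} to the independent variables $Y_i'$ with $\sigma = \sum_{i=1}^{n/4} d_i^2 \leq 2 \maxd n^2 / \log n$ by the convexity bound \eqref{sqsu}, and deviation $t = \aon n - \E[w_f'] \geq \aon n / 2$. This yields
$$\Pr[w_f \geq \aon n] \leq \Pr[w_f' - \E[w_f'] \geq \tfrac{1}{2}\aon n] \leq \exp\!\left(-\frac{\aon^2 n^2 / 4}{4 \maxd n^2 / \log n}\right) = n^{-\aon^2/(16\maxd)}.$$
Since $\aon^2 > 32 \maxd$ by \eqref{eq1}, the exponent exceeds $2$, so the probability is $o(n^{-2})$, as claimed. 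The main technical point is the clean coupling step that lets us treat the dependent process as if it were a sum of independent bounded variables; the rest is bookkeeping with the constraints \eqref{eq1}.
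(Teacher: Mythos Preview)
Your proof is correct and follows essentially the same route as the paper: decompose $w_f$ into step-wise contributions, bound the conditional probability by $4\glo$ using the global bound and the $n/2$ feasible choices, and apply Azuma--Hoeffding with \eqref{sqsu} and \eqref{eq1} to get the exponent $\aon^2/(16\maxd)>2$. Two minor remarks: the paper denotes the global bound parameter by $\glo$ (rendered $\alpha$), not $\gamma$ (which is used for a different constant in \eqref{eq2}); and your explicit coupling to independent $Y_i'$ is a cleaner justification than the paper gives for invoking the independent-case Azuma--Hoeffding stated in Section~\ref{tools}.
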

		\begin{proof}
			Fix some $f\in c(K_n)$. Write $w_f = X_1 +\ldots + X_{n/4}$, where $X_i$ is a random variable that takes the value $d_i$ if $f$ is present at $b_i$ and zero otherwise. We start by bounding the expectation of $w_f$. Observe that $f$ is present at at most $2\glo n$ vertices of $K_n$ since $c$ is globally $\glo n$ bounded. Recall, that when chosing $b_i$ in the $i$-th step of RGE, there are at least $n/2$ feasible choices. Thus, the probability that we choose a vertex at which $f$ is present is at most $2\glo n/(n/2) = 4\glo$. Thus, $\Pr[X_i = d_i] \leq 4\glo$. It is crucial that this bound on $\Pr[X_i = d_i]$ holds independently of the previous `history'  of RGE. By linearity of expectation, we obtain $\E[w_f] \leq 4\glo \sum_{i=1}^{n/4} d_i \leq 8\glo n \leq \aon n/2$, where we used \eqref{eq1} in the last inequality. Since the bound on $\Pr[X_i = d_i]$ holds independently of previous choices of $b_j$ and since changing the outcome of $X_i$ changes the random variable $w_f$ by at most $d_i$, we can apply Azuma-Hoeffding. Recalling \eqref{eq1} and \eqref{sqsu}, we find
			$$
			\Pr\bigl[w_f \geq \aon n\bigr]\leq \Pr\Bigl[w_f - E[w_f] \geq \aon n/2\Bigr]\leq \exp\left(-\frac{(\aon n/2)^2}{2\sum_{i=1}^{n/4} d_i^2}\right) \leq \exp\left(-\frac{\aon^2}{16 \maxd}\log n\right) = o\left(n^{-2}\right).
			$$
			This finishes the proof of the first claim.
		\end{proof}
		
		\begin{Claim}\label{verm}
			For each vertex $g\in K_n$ we have $\Pr[m_g \geq \atw n]=o\left(n^{-1}\right)$.
		\end{Claim}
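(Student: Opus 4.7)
The plan is to follow the template of Claim \ref{colw} almost verbatim: decompose $m_g$ into $\{0,1\}$-valued indicators revealed across the $n/4$ steps of RGE, bound the expectation using the global $\glo n$-boundedness of $c$, and close with Azuma--Hoeffding. Write $C_g$ for the set of colours present at $g$, and for $k\in[n/4]$ with $v_k$ having an (automatically unique) earlier neighbour in $T$, denote that neighbour's index by $\mathrm{par}(k)$. Since $T_0$ is rainbow, every colour in $c(T_0)\cap C_g$ corresponds to a unique edge $b_{\mathrm{par}(k)}b_k$ of $T_0$, yielding the deterministic inequality
\[
m_g \,\leq\, |c(T_0)\cap C_g| \,=\, \sum_{k=1}^{n/4} X_k, \qquad X_k \,:=\, \mathbf{1}\bigl[\mathrm{par}(k)\text{ defined and }c(b_{\mathrm{par}(k)}b_k)\in C_g\bigr].
\]

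The key new ingredient for the expectation (and the real difference from Claim \ref{colw}) is a two-step joint probability bound: since RGE chooses $b_k$ uniformly at random from a feasible set of size at least $n/2$, one has $\Pr[b_k = u \mid b_1,\dots,b_{k-1}] \leq 2/n$ for every vertex $u$ and every step $k$, and hence $\Pr[b_i=u,\,b_j=v] \leq 4/n^2$ for all steps $i < j$ and all $u,v\in V(K_n)$. Since any fixed colour $f$ accounts for at most $2\glo n$ ordered endpoint pairs, $\Pr[c(b_i b_j)=f] \leq 8\glo/n$. Summing over the $\leq n$ colours of $C_g$ gives $\Pr[X_k=1] \leq 8\glo$, and summing over $k\leq n/4$ gives $\E[\sum_k X_k] \leq 2\glo n$. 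The assumed inequalities $16\glo\leq\aon$, $32\aon\leq\ath$, and $16\atw\leq\ath$ chain to give $\glo\leq\atw/32$, so this expectation is at most $\atw n/16$.

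For concentration, modifying a single $b_\ell$ (and keeping the remaining $b_j$ fixed) can affect only those $X_k$ for which $\ell\in\{k,\mathrm{par}(k)\}$, and the number of such $k$ is at most $d_T(v_\ell)=:d_\ell$; since each affected $X_k$ lies in $\{0,1\}$, the sum $\sum_k X_k$ has Lipschitz constant $d_\ell$ in the $\ell$-th coordinate. Applying Azuma--Hoeffding to the Doob martingale $M_\ell=\E[\sum_k X_k\mid b_1,\dots,b_\ell]$ and using $\sum_{\ell\leq n/4}d_\ell^2\leq 2\maxd n^2/\log n$ from \eqref{sqsu},
\[
\Pr\!\left[\sum_k X_k>\atw n\right] \,\leq\, \exp\!\left(-\frac{(\atw n/2)^2}{2\cdot 2\maxd n^2/\log n}\right) \,=\, \exp\!\left(-\frac{\atw^2\log n}{16\maxd}\right) \,=\, o(n^{-1}),
\]
where the last step uses the hypothesis $\atw^2>64\maxd$ (any constant $>16$ would suffice). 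The main conceptual obstacle in adapting Claim \ref{colw} is precisely the shift from one-step to two-step-dependent indicators; the joint probability bound $\Pr[b_i=u,b_j=v]\leq 4/n^2$, obtained by iterating the $\leq 2/n$ per-step uniform bound, is what closes this gap.
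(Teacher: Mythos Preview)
Your concentration step has a genuine gap. You correctly observe that $\sum_k X_k$ is $d_\ell$-Lipschitz in the coordinate $b_\ell$ when all other $b_j$ are held fixed, but this coordinate-Lipschitz condition only yields the Azuma/McDiarmid bound when the coordinates are \emph{independent}, which the $b_\ell$ produced by RGE are not: the feasible set, and hence the law, of $b_{\ell+1},\dots,b_{n/4}$ depends on $b_\ell$. In terms of the Doob martingale you invoke, bounding $|M_\ell-M_{\ell-1}|$ requires comparing $\E[\sum_k X_k\mid b_1,\dots,b_{\ell-1},b_\ell]$ with $\E[\sum_k X_k\mid b_1,\dots,b_{\ell-1},b_\ell']$, and these two expectations are taken under \emph{different} conditional laws of the future; the pointwise Lipschitz bound says nothing about the resulting discrepancy. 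Equivalently, there is no uniform bound on $\Pr[X_k=1\mid b_1,\dots,b_{k-1}]$: writing $ov(b)=|\{f: f\text{ present at }b\text{ and at }g\}|$, this conditional probability is at most $2\,ov(b_{\mathrm{par}(k)})/n$, which can be close to $1$ when $b_{\mathrm{par}(k)}$ shares many colours with $g$ (for instance if $b_{\mathrm{par}(k)}=g$). The paper's proof confronts exactly this obstruction by introducing a threshold on $ov(\cdot)$: vertices $b$ with $ov(b)>\bad n$ are declared \emph{bad}; the contribution coming from steps where the parent is bad is controlled via the uniform-in-history bound $\Pr[b_k\text{ bad}\mid b_1,\dots,b_{k-1}]\leq 2\nbad$ (there are at most $\nbad n$ bad vertices), and on the remaining \emph{good} part one genuinely has the uniform per-step bound $\Pr[X_k=1\mid b_1,\dots,b_{k-1}]\leq 2\bad$, which is what permits a coupling with independent Bernoullis and a Chernoff bound. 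This bad/good split is precisely the missing idea in your argument.

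A secondary point: your chain ``$16\glo\leq\aon$, $32\aon\leq\ath$, $16\atw\leq\ath\Rightarrow\glo\leq\atw/32$'' is invalid, since the third inequality upper-bounds $\atw$ rather than lower-bounds it. The desired relation does follow, but from \eqref{eq2}: $2\glo\leq\bad\nbad\leq(\atw/2)(\atw/16)=\atw^2/32$, hence $\glo\leq\atw^2/64\leq\atw/32$ as $\atw<1$.
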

		\begin{proof}
			Fix some vertex $g\in K_n$. We start by observing that not `many' vertices of $K_n$ can share `a lot of' colours with $g$. More precisely, for a vertex $b\in K_n$ let $ov(b) = |\{f\in K_n | f \text{ present at } g \text{ and } b\}|$. Since there are less than $n$ colours present at $g$ and since every colour is present at at most $2\glo n$ vertices, we can bound
			$$
			\sum_{b\in K_n} ov(b) \leq n\cdot 2\glo n = 2\glo n^2.
			$$
			Call a vertex $b\in K_n$ `bad' if $ov(b)>\bad n$. Observe that by the bound given above, we can have at most $\nbad n$ bad vertices since $\nbad n \cdot\bad n \geq 2\glo n^2$ by \eqref{eq2}.
			
			We now define random variables $X_1,\ldots, X_{n/4}$. Set 
			$$X_i = |\{j>i \;|\quad b_i b_j \in T_0\text{ and } c(b_i b_j) \text{ present at } g \}|.$$ 
			The condition $j>i$ avoids double counting edges, so that we have $m_g \leq X_1 + \ldots + X_{n/4}$. \Alexey{Is there really equality here? Doesn't $X_i$ potentially count edges from $g$ to $B$ while $m_g$ only counts edges from $g$ to $A$}\Fred{changed to inequality} To bound each $X_i$ we distinguish whether or not $b_i$ is bad. To this end, introduce random variables $Y_1,\ldots, Y_{n/4}$ where $Y_i = d_i$ if $b_i$ is bad and $Y_i = 0$ else. Moreover, let $Z_1,\ldots,Z_{n/4}$ be random variables such that $Z_i = 0$ if $b_i$ is bad and $Z_i = X_i$ otherwise. It is easy to see that $X_i \leq Y_i + Z_i$. Writing $Y = Y_1+\ldots+Y_{n/4}$ and $Z = Z_1+\ldots+Z_{n/4}$ for convenience, we thus have $m_g\leq Y+Z$. We first bound $Y$. Observe that, since there are at most $\nbad n$ bad vertices and at least $n/2$ feasible choices for RGE at the $k$-th step, we have $\Pr[Y_k = d_k]\leq 2\nbad$, equivalently $E[Y_k]\leq 2\nbad d_k$. We can now proceed analogously to the proof of Claim \ref{colw}: By linearity of expectation $E[Y] \leq 4\nbad n  \leq \atw n/4$ (using \eqref{eq2}) and thus, using Azuma-Hoeffding as well as \eqref{eq1} and \eqref{sqsu} again, 
			\begin{eqnarray}\label{one}
			\Pr[Y\geq \atw n/2] \leq 
			\Pr\Bigl[Y - E[Y] \geq \atw n/4\Bigr]\leq \exp\left(-\frac{\atw^2}{64 \maxd}\log n\right) = o\left(n^{-1}\right).
			\end{eqnarray}
			To bound $Z$ consider each summand $Z_k$ individually: Fix some $k$ and assume that $b_k$ is not bad, otherwise $Z_k = 0$. Denote $I = \{i>k|b_kb_i\in T_0\}$. For any $i\in I$, let $W_i$ be the indicator variable of the event that $c(b_k b_i)$ is present at $g$. We then have $Z_k = \sum_{i\in I} W_i$. We now bound the probability that $W_i$ is 1. Note that once $b_k$ is chosen, the event $W_i=1$ depends on the choice of $b_i$ only. Since $b_k$ is not bad, there are at most $\bad n$ colours present at $b_k$ which are also present at $g$. Moreover, there are at least $n/2$ feasible choices for $b_i$. Thus, $\Pr[W_i = 1] \leq \bad n / (n/2) = 2\bad$. It is now easy to see that, since $T_0$ is a forest and has at most $n/4$ edges, we can bound $Z_1+\ldots+Z_{n/4}$ by the sum of $n/4$ independent Bernoulli variables with success probability $2\bad$. By this coupling argument and the classic Chernoff bound (see e.g. \cite{Che}) we can use $\atw\geq 2\bad$ from \eqref{eq2} to obtain:
			\begin{eqnarray}\label{two}
			\Pr[Z \geq \atw n/2] =\Pr\left[Z\geq 2E[Z]\right] \leq \exp\left( -2E[Z]^2/n \right) =o\left(n^{-1}\right).
			\end{eqnarray}
			Combining \eqref{one} and \eqref{two}, we finally obtain the desired result:
			$$
			\Pr[m_g \geq \atw n]\leq \Pr[Y+Z\geq \atw n] \leq \Pr[Y\geq \atw n /2] + \Pr[Z\geq \atw n /2]=o\left(n^{-1}\right).
			$$
		\end{proof}
		
		\begin{Claim}\label{verw}
			For every vertex $g\in K_n$ we have $\Pr[w_g\geq \ath n]=o\left(n^{-1}\right)$.
		\end{Claim}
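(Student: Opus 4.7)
My plan is to follow the proof of Claim~\ref{verm} as closely as possible. Since $w_g=0$ whenever $g\in B$, it suffices to bound the random variable $\sum_{i=1}^{n/4} d_i\mathbf{1}[c(gb_i)\in c(T_0)]$, which agrees with $w_g$ when $g\in A$. Reusing the $g$-bad vertices from the proof of Claim~\ref{verm} (of which there are at most $\nbad n$), I would split this sum as $Y+Z$ with $Y:=\sum_{i=1}^{n/4} d_i\mathbf{1}[b_i\text{ is }g\text{-bad}]$ and $Z$ the remaining not-bad contribution.

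The random variable $Y$ is handled exactly as the variable called $Y$ in Claim~\ref{verm}: each $b_i$ is bad with conditional probability at most $2\nbad$ independently of the history, so $\E[Y]\leq 4\nbad n$, and an application of Azuma-Hoeffding together with the bound $\sum_{i=1}^{n/4} d_i^2\leq 2\maxd n^2/\log n$ from \eqref{sqsu} gives $\Pr[Y\geq \ath n/2]=o(n^{-1})$, once one feeds in the parameter relations $16\nbad\leq \atw$, $16\atw\leq \ath$ and $\ath^2>64\maxd$.

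For $Z$ I would re-index by the edges of $T_0$: by properness of $c$ at $g$, whenever a summand of $Z$ is non-zero there is a unique edge $e=b_{i_k}b_k\in T_0$ (with $i_k<k$) such that $c(e)=c(gb_i)$, and each such edge is matched by at most one index $l(e)\in [n/4]$. Splitting $Z$ further according to whether the parent endpoint $b_{i_k}$ of the witness edge is $g$-bad, the non-bad-parent case mirrors the $Z$-analysis of Claim~\ref{verm}: if $b_{i_k}$ is not bad then at step $k$ of RGE at most $\bad n$ feasible choices of $b_k$ produce an edge with $c(b_{i_k}b_k)$ present at $g$, so the indicator of this event has conditional probability at most $2\bad$; coupling these indicators across the $\leq n/4$ edges of $T_0$ with independent Bernoulli$(2\bad)$ random variables and applying a Chernoff bound shows that the count $N$ of such witness edges is at most $\bad n$ outside an event of probability $o(n^{-1})$.

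The main technical obstacle, and the point that really distinguishes this claim from Claim~\ref{verm}, is that each matching contribution carries an extra weight $d_{l(e)}$ attached to a third vertex $b_{l(e)}$ rather than to the endpoints of $e$, so the crude estimate $d_{l(e)}\leq \maxd n/\log n$ times $N$ is not small enough. To circumvent this I would combine the Chernoff bound on $N$ with the feasibility estimate $\Pr[b_j=v^{\star}]\leq 2/n$ (available for any fixed $v^{\star}\in V(K_n)$ and any RGE step $j$, as used throughout the paper) together with $\sum_i d_i\leq 2n$: conditional on a witness edge having colour $f$ matched at $g$, the expected matched weight satisfies $\E[d_{l(e)}\mid c(e)=f]\leq 4$, so that $\E[Z^{\mathrm{good}}]=O(\bad n)\ll \ath n$. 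Concentration of $Z^{\mathrm{good}}$ around its expectation should then follow from a moment-method argument that bounds $\E[(Z^{\mathrm{good}})^k]$ for $k$ of order $\log n$ and exploits the near-independence of the matching events on distinct edges. The bad-parent contribution $Z^{\mathrm{bad}}$ is a degree-weighted variant of $Y$ and is handled by an analogous Azuma-Hoeffding estimate using $\ath^2>64\maxd$; summing the three tails and taking a union bound over $g$ then completes the proof.
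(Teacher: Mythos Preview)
Your decomposition is genuinely different from the paper's, and the central concentration step is missing in a way that is not easy to repair.

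The paper does \emph{not} recycle the bad/not-bad vertex dichotomy from Claim~\ref{verm}. Instead it conditions on the conclusions of Claims~\ref{colw} and~\ref{verm} and splits $w_g\le X+Y$ by \emph{timing}: $X_k=d_k$ if the colour $c(gb_k)$ was already in $c(T[b_1,\dots,b_{k-1}])$ at step~$k$, and $Y$ collects those indices for which the colour enters $c(T_0)$ only at a later step. The variable $X$ is then bounded directly using $m_g\le\atw n$ (so $\Pr[X_k=d_k]\le 2\atw$) and Azuma--Hoeffding. For $Y$ the paper reverses the viewpoint: rather than fixing $i$ and asking whether some later edge gets colour $c(gb_i)$, it records, for each step $k$, the list $L_k$ of indices $i<k$ for which $c(gb_i)$ is present at $b_{pre(k)}$. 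The crucial inequality is $|\{k:L_k\ni i\}|\le w_{c(gb_i)}\le \aon n$, obtained from Claim~\ref{colw}. Via a coupling this yields independent variables $W_i'\in\{0,d_i\}$ with $\Pr[W_i'=d_i]\le 4\aon$, and a single Azuma--Hoeffding application finishes. In short, the paper dissolves exactly the obstacle you identify (the weight $d_{l(e)}$ lives on a third vertex) by invoking the already-proved bound on $w_f$; no bad-vertex analysis and no moment computation are needed.

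Your proposal, by contrast, leaves two real gaps. First, for $Z^{\mathrm{good}}$ you only sketch that $\E[d_{l(e)}\mid c(e)=f]\le 4$ and then appeal to an unspecified moment bound. But the conditioning on $c(e)=f$ couples nontrivially with the location of $b_{l(e)}$, and the summands $d_{l(e)}$ can be as large as $\maxd n/\log n$; turning the expectation bound into a tail bound of order $o(n^{-1})$ requires a genuine argument that you have not supplied, and it is not clear that a naive moment method of order $\log n$ handles the dependencies between different edges of $T_0$. Second, your treatment of $Z^{\mathrm{bad}}$ is not ``a degree-weighted variant of $Y$'': in $Y$ the weight $d_i$ and the badness indicator sit on the \emph{same} vertex $b_i$, whereas in $Z^{\mathrm{bad}}$ the weight $d_i$ is attached to $b_i$ while the badness condition is on $b_{pre(k)}$, the parent of the witness edge. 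An Azuma argument along the lines you suggest does not apply directly. The paper's timing decomposition together with the use of $w_f\le\aon n$ is precisely the device that sidesteps both difficulties.
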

		\begin{proof}
			Fix some vertex $g\in K_n$. For the remainder of the proof we will assume that $m_g\leq \atw n$ and that for each colour $f$ which is present at $g$ we have $w_f \leq \aon n$. This is fine, since by Claims \ref{colw} and \ref{verm} and a union bound these inequalities hold with probability $1-o\left(n^{-1}\right)$. \\
			For this proof it will be convenient to have the  following piece of notation: Recall that for each $i\leq n/4$, there is at most one index $j<i$ so that $b_jb_i\in T_0$. If such an index $j$ exists, define $pre(i) = j$. Otherwise let $pre(i) = \emptyset$, say.\\
			Define random variables $X_1,\ldots,X_{n/4}$ as follows: If at the $k$-th step of RGE $b_k$ is chosen so that $c(gb_k)\in c(T[b_1\ldots,b_{k-1}])$, then we set $X_k = d_k$, otherwise we set $X_k = 0$. \\
			We also define random variables $Y_1,\ldots,Y_{n/4}$: If there is a colour $f = c(b_{pre(k)}b_k)$ chosen in the $k$-th step\footnote{This of course only makes sense if $pre(k)\neq \emptyset$. In case $pre(k)=\emptyset$ then no colour is chosen at the $k$-th step and we set $Y_k = 0$.  \Alexey{This footnote cannot stay as it is since it would annoy the reviewer a lot. We should either remove it completely, or preferably think of a way of saying what it says without insisting that the reader to do extra work.}\Fred{I deleted the second sentence of this footnote which said `There will be similar case distinctions in what follows and we trust the reader to resolve these problems.' It was a lie anyways.}} and some index $j<k$ so that $c(gb_j) = f$, then we set $Y_k=d_j$. Otherwise we set $Y_k = 0$. \Fred{Sentence added here:} We will slightly abuse notation and denote the event that ``$c(gb_j) = c(b_k b_{pre(k)})$ for $j<k$'' by $\{Y_k = d_j\}$, even though this is ambiguous if there is some $i\neq j$ with $d_i = d_j$.\\
			 We briefly explain why, with these definitions, we have $w_g \leq X + Y$, where $X=X_1+\ldots + X_{n/4}$ and $Y = Y_1+\ldots + Y_{n/4}$. Recall that, by definition, $w_g = \sum_{c(gb_i)\in c(T_0) } d_i$ unless $g\in\{b_1,\ldots,b_{n/4}\}$ in which case $w_g = 0$. For some fixed $k$, $d_k$ is a summand of $w_g$ if the colour $c(gb_k)$ lies in $c(T_0)$ - this can happen in one of two ways: If $c(gb_k)\in T[b_1,\ldots,b_{k-1}]$, then $X_k = d_k$, otherwise the colour $c(gb_k)$ is chosen at the $i$-th step of RGE for some $i>k$ and then $Y_i = d_k$. This shows that $d_k$ is a summand of $X+Y$ if it is a summand of $w_g$ implying $w_g \leq X + Y$. We remark that, crucially, the colour $c(gb_k)$ cannot be chosen in the $k$-th step of RGE itself, unless $g = b_{pre(k)}$ in which case $w_g = 0$ by definition. This is the only step that fails in our proof of Theorem \ref{TRT} if we replace the condition that our colouring is proper (i.e. locally 1-bounded) by the condition that it is locally 3-bounded, say.\\ 
			We will now derive concentration results for $X$ and $Y$ implying the desired result. Consider $X$ first, we use the same line of reasoning as in Claims \ref{colw} and \ref{verm}. Recall that we conditioned on the event $m_g\leq \atw n$. Thus, at the $k$-th step of RGE there are at most $m_g\leq\atw n$ vertices $b\in K_n$ so that $c(gb)\in c(T[b_1,\ldots,b_{k-1}])$. Hence, $\Pr[X_k = d_k] \leq \atw n /(n/2)=2\atw$. We deduce $E[X] \leq 4\atw n \leq \ath n/4$ (using \eqref{eq1}) and, by Azuma-Hoeffding and equations \eqref{eq1} and \eqref{sqsu},
			\begin{eqnarray}\label{x}
			\Pr[X\geq \ath n/2] \leq \Pr\Bigl[X-E[X]\geq \ath n/ 4\Bigr] =\exp\left(-\frac{\ath^2}{64\maxd} \log n\right) = o(n^{-1}).
			\end{eqnarray}
						
			It remains to bound $Y$. We start with the observation that, if $Y_k = d_i$, then there is no index $k'\neq k$ with $Y_{k'}=d_i$, since otherwise we would have $c(b_{pre(k')}b_{k'}) = c(b_{pre(k)}b_k) = c(gb_i)$ contradicting the fact that $T_0$ is rainbow. 
			
			For a more detailed analysis of $Y$, for each $1\leq k \leq n/4$ we record which values $Y_k$ can take given the choices of $b_1,\ldots,b_{k-1}$. We record these values in a list $L_k$. We add $i$ to $L_k$ if $i<k$ and the colour $c(gb_i)$ is present at $b_{pre(k)}$. Observe that, if $c(gb_i)$ is not present at $b_{pre(k)}$, then it is impossible that $c(b_{pre(k)}b_k) = c(gb_i)$ and therefore it is impossible that $Y_k = d_i$. Hence, $Y_k$ can only take the value $d_i$ if $i\in L_k$. 
			
			\Alexey{I don't think this paragraph is quite correct if $d_i=d_j$ holds for $i\neq j$. When you say things like ``$Y_k=d_i$'' you really mean the event that ``there is a colour $f = c(b_{pre(k)}b_k)$ chosen in the $k$-th step and some index $i<k$ so that $c(gb_i) = f$''? We should think of some way to not have this issue.}
			\Fred{I added a sentence above. I had the impression that being a bit sloppy makes it easier to understand. But if you think we should be formal, I can think of another way to clarify this. }
			
			If it is the case that $i\in L_k$, then we have $\Pr[Y_k = d_i]\leq 2/n$, since in the $k$-th step RGE has at least $n/2$ feasible choices for $b_k$ and at most one of them leads to $Y_k = d_i$. 			
			Moreover, since there are $n/2$ feasible choices and at most $k-1 \leq n/4$ values that $Y_k$ can take, we know that for any given $i\in L_k$ the probability that $Y_k=d_i$ is at most $4/n$ even if we condition on the event $\left\{Y_k\neq d_j \text{ for all } j\in L_k\setminus\{i\}\right\}$. Thus, the bound $\Pr[Y_k = d_i] \leq 4/n$ holds, independently of whether or not we know that $Y_k\neq d_j$ for some values of $j$ (note that this is not necessarily true for the bound $\Pr[Y_k = d_i]\leq 2/n$: If we know that $Y_k\neq d_j$, this means that RGE does not choose the edge of colour $c(gb_j)$ in the $k$-th step, so that it becomes more likely that RGE chooses the edge of colour $c(gb_i)$). 
			Hence, using a coupling argument, we can bound $Y_k \leq \sum_{i\in L_k} W_{k,i}$ where the $W_{k,i}$ are mutually independent random variables with $\Pr[W_{k,i} = d_i] = 4/n$ and $\Pr[W_{k,i} = 0] = 1-4/n$. \Alexey{I think this coupling argument needs more detail. In particular it wasn't obvious to me what is the purpose of conditioning on the events $\left\{Y_k\neq d_j \text{ for all } j\in L_k\setminus\{i\}\right\}$.}\Fred{I think without this conditioning, we can't bound $Y_k$ by a sum of independent random variables, since there are dependencies. I tried to add some explanations.}
			
			We now slightly adapt our choice of $W_{k,i}$ to reflect the fact that for each $i$ there is at most one $k$ with $Y_k = d_i$. This will allow us to get a better bound on $Y$. For each $i\in L_k$ we introduce a random variable $W_{k,i}'$. These random variables are sampled in increasing order of $k$. If $W_{k',i}'=0$ for all $k'<k$ (with $i\in L_{k'}$, of course), then we set $W_{k,i}'=W_{k,i}$. If, on the other hand, there is some $k'<k$ so that $W_{k',i}' = d_i$, then we set $W_{k,i}'=0$. Recalling that for each $i$ there is at most one $k$ with $Y_k = d_i$, we can thus use a coupling argument to bound
			$$
			Y \leq \sum_{k = 1}^{n/4} \sum_{i \in L_k} W_{k,i}'.
			$$
			Changing the order of summation gives			 
			$$
			Y \leq \sum_{i = 1}^{n/4} \sum_{\substack{k \; :\\ L_k\ni i} } W_{k,i}'.
			$$	
				
			Now, write $W_{i}' = \sum_{k : L_k\ni i} W_{k,i}'$ and $W'=\sum_{i=1}^{n/4} W_i'$. By construction, the $W_{i}'$ are mutually independent and each $W_{i}'$ either takes the value $d_i$ or 0. To conclude, we will bound $\Pr[W_i'=d_i]$ and then apply Azuma-Hoeffding to bound $W'$. To achieve the former, note that $\Pr[W_i' = d_i]= \Pr[\exists k:W_{k,i}=d_i]\leq\frac{4}{n} \cdot \left\vert \{k: L_k\ni i\}\right\vert$, where the last inequality is a union bound. It follows from the definitions of $L_k$ and $w_{c(gb_i)}$ that for every $i$ 
			$$
			\left\vert \{k:L_k\ni i\}\right\vert=
			\sum_{\substack{j \;:\\ c(gb_i) \text{ present at }b_j }}\left\vert\{k : pre(k)=j\}\right\vert\leq
		  \sum_{\substack{j \\ c(g{b_i})\text{ present at } b_j}} d_j=w_{c(gb_i)}.
			$$
			\Alexey{I added the second and third steps to the previous inequality, check that it's still correct and makes sense}\Fred{I think it is correct now} Since we conditioned on the event that $w_{c(gb_i)}\leq \aon n$, we obtain $\Pr[W_i' = d_i]\leq 4\aon$. 
			
			This implies (together with \eqref{eq1}) that $E[W']\leq 8\aon n \leq \ath n /4$. Applying Azuma-Hoeffding as well as \eqref{eq1} and \eqref{sqsu} one last time, we get
			$$
			\Pr\left[W' \geq \ath n/2\right] \leq \Pr\left[W'-E[W']\geq \ath n/4 \right] \leq \exp\left(-\frac{\ath^2}{64\maxd} \log n\right) = o(n^{-1}).
			$$
			Combining this with $Y \leq W'$, $w_g\leq X+Y$ and \eqref{x} finishes the proof.	
		\end{proof}
		Lemma \ref{RGEL} now follows from Claims \ref{colw}-\ref{verw} and a union bound over all $n$ vertices and all at most ${n\choose 2}$ colours.	
		
	\end{proof}

	\subsection{Proof of Lemma \ref{LLLS}}\label{LLL}	
	
	\begin{proof}
		We observe $|A|\geq (1-\roots) n$. By `$n$ sufficiently large' we simply mean that $\Pi_{i=0}^3 \left(|A|-i\right)\geq \left((1-\roots)n\right)^4 /2$.
		
		To prove the Lemma we apply the Local Lemma as stated in Theorem \ref{LocalS}. To this end, we consider a random bijection $\sigma:W\rightarrow A$ chosen uniformly from all such bijections. We want to find $\sigma$ so that $\pi\cup \sigma$ gives a rainbow embedding of $T$. The only obstruction to the embedding being rainbow is if two edges have the same colour. We distinguish cases depending on where these edges lie - each edge either lies in $T_0$ or goes from $B$ to $A$ or lies in $A$. We therefore have the following kinds of `bad events'.  
		
		\begin{itemize}
			\item (One edge in $T_0$, one going from $B$ to $A$) 
			For  a vertex $v\in W$ and $v_0\in R$ with $vv_0\in T$ and a vertex $a\in A$ with  $c(\pi(v_0)a)\in c(T_0)$, it is a bad event if $\sigma(v) = a$, since then $\pi\cup\sigma$ will not be a rainbow embedding of $T$. We refer to this event as $\textit{FR}_{v, v_0}^{a}$. Here in `$\textit{FR}$' the $F$ stands for `forbidden' indicating that the edge $\pi(v_0)a$, uses a `forbidden' colour from $c(T_0)$. The $R$ stands for `root' indicating that the edge $v_0 v$ is incident to the `roots' $R$.

			\item (One edge in $T_0$, one in $A$)			
			For distinct vertices $v,w\in W$ and distinct vertices $a,b\in A$ with $vw\in T$ and $c(ab)\in c(T_0)$ it is a bad event if $\sigma(v)=a,\sigma(w)=b$. We refer to this event as $F_{v,w}^{a,b}$. \\
			
			\item (Both edges from $B$ to $A$)
			If there are distinct vertices $v,w\in W$ and $v_0,w_0\in R$ so that $v_{0}v,w_{0}w\in T$ and if there are distinct vertices $a,b\in A$ with $c(\pi(v_0)a)=c(\pi(w_0)b)$, then it is a bad event if $\sigma(v)=a,\sigma(w)=b$. We refer to this event as $\textit{SRR}_{v,v_0,w,w_0}^{a,b}$. Here the `S' indicates that two edges which have the `same' colour are chosen and the `RR' indicates that both these edges are incident to $R$.
			
			\item (One edge from $B$ to $A$ and one in $A$) If there are distinct vertices $v,w,x\in W$ and $v_0\in R$ so that $v_0v, w x\in T$ and if there are distinct vertices, $a,b,d \in A$ with $c(\pi(v_0)a) = c(bd)$, then it is a bad event if $\sigma(v)=a,\sigma(w)=b,\sigma(x)=d$. We refer to this event as $\textit{SR}_{v,v_0,w,x}^{a,b,d}$.
			
			\item (Both edges in $A$) 
			If there are distinct vertices $v,w,x,y\in W$ so that $vw,xy\in T$ and if there are vertices $a,b,d,e\in A$ so that $c(ab)= c(de)$, then it is a bad event if $\sigma(v)=a,\sigma(w)=b,\sigma(x)=d,\sigma(y)=e$. We refer to this event as $S_{v,w,x,y}^{a,b,d,e}$.
		\end{itemize}
		\noindent\textbf{Notation.} For an element $x\in A\cup W$ we write $x\in \textit{FR}_{v,v_0}^a$ if $x\in \{v,a\}$. For the other bad events, we will use an analogous notation. For two bad events $C,D $ we write $C\sim D $ if there is an $x\in A\cup W$ with $x\in C$ and $x\in D $.
		
		\noindent\textbf{Remark.} Observe that, for example, the events $F_{v,w}^{a,b}$ and $F_{w,v}^{b,a}$ are identical. In the arguments below, we will count events $F_{v,w}^{a,b}$ that have $x\in F_{v,w}^{a,b}$ for some fixed $x\in W\cup A$. In these counting arguments we will count each event only once.
		If for example $x\in W$, we will count the events with labels $F_{x,w}^{a,b}$, but not those with labels $F_{w,x}^{a,b}$. The case $x\in A$ is treated similarly. 
		This remark of counting each event only once even if it has multiple `labels' also applies to the other kinds of bad events, that is to the events $\textit{SRR}_{v,v_0,w,w_0}^{a,b}$, $\textit{SR}_{v,v_0,w,x}^{a,b,d}$ and $S_{v,w,x,y}^{a,b,d,e}$.
		
		 \Alexey{Can this be made precise somehow? This remark asks the reader to do extra work, which is bad. The remark should definitely be rephrased not to do this.}\Fred{I gave one concrete example hopefully making it clearer. If this is not clear enough, I guess I would have to add a short explanation in each paragraph below. The only alternative I see would be ordering the vertices in W and only allowing events $F_{v,w}^{a,b}$ with $v<w$ but I think this makes things more cumbersome to read and write } 
		
		Observe that all these events are indeed canonical and that our notation $C\sim D $ for bad events matches the notation from Theorem \ref{LocalS}. Finally note that, if none of the bad events occurs, then the embedding $(\pi\cup\sigma)(T)$ is indeed rainbow. Denote the union of all these bad events by $\B$. Our aim is to apply Theorem \ref{LocalS} in order to show that the embedding $(\pi\cup \sigma)(T)$ is rainbow with positive probability. Thus, it suffices to show that for all $C\in\B$ we have
		\begin{eqnarray}\label{this}
		\sum_{\substack{C'\in \B \\ C'\sim C}} \Pr[C']\leq \frac{1}{4}.
		\end{eqnarray}
		Proving \eqref{this} is a bit cumbersome but not hard, as long as we count carefully. We introduce some notation and outline the straightforward approach to prove \eqref{this} before giving the details. We will define functions $\textit{fr},f,\textit{srr},sr,s: \;A\cup W \rightarrow \R$. For example $\textit{fr}(x)$ is defined as
		$$
		\textit{fr}(x) = \sum_{x\in \textit{FR}_{v,v_0}^{a}} \Pr[\textit{FR}_{v,v_0}^{a}].
		$$
		The other functions are defined analogously so that $f(x)$ features bad events of type $F$ rather than of type $\textit{FR}$, so that  $\textit{srr}(x)$ features bad event of type $\textit{SRR}$ and so on. We will bound each of these functions separately, this is done in Claims \ref{first}-\ref{last}. This is where we will use our assumptions on $w_f,m_g,w_g$ and $d(w)\leq D $ for $w\in W$.\\
		Having bounded $f(x),\textit{fr}(x),\ldots$, we will use the following simple bound for each $C\in\B$ to conclude. 
		
		\begin{align}\label{sum}
		\sum_{\substack{C'\in \B \\ C'\sim C}} \Pr[C'] \leq& 
		\sum_{x\in C} \Bigl(\textit{fr}(x)+f(x)+\textit{srr}(x)+\textit{sr}(x)+s(x)\Bigr)\nonumber\\
		\leq& 
		4 \Bigl(\max_{v\in W}\bigl(\textit{fr}(v)+f(v)+\textit{srr}(v)+\textit{sr}(v)+s(v)\bigr) \\
		&+ \max_{a\in A}\bigl(\textit{fr}(a)+f(a)+\textit{srr}(a)+\textit{sr}(a)+s(a)\bigr)\Bigr).\nonumber
		\end{align}
		
		We now give the details of bounding $\textit{fr}(x),f(x),\textit{srr}(x),\textit{sr}(x),s(x)$.
		\begin{Claim}\label{first}
			We have
			\begin{eqnarray*}
				\textit{fr}(v)\leq& D\aio (1-\roots)^{-1} \quad&\text{ for all }v\in W \text{ and }\\
				\textit{fr}(a) \leq&  \aio (1-\roots)^{-1}  \quad&\text{ for all }a\in A.
			\end{eqnarray*}
		\end{Claim}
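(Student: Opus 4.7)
The plan is to compute $\Pr[\textit{FR}_{v,v_0}^a]$ once and then bound the number of relevant bad events containing a fixed $x$ in two ways, depending on whether $x$ lies in $W$ or in $A$.

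First I would observe that $\sigma : W \to A$ is a uniform bijection, $|A| = |W|$, and by hypothesis $|A| \geq (1-r)n$. Since $\textit{FR}_{v,v_0}^{a}$ is the event $\sigma(v) = a$, its probability is exactly $1/|A| \leq 1/((1-r)n)$. Hence to prove both bounds it suffices to count, for a fixed $x$, the number of triples $(v,v_0,a)$ with $v \in W$, $v_0 \in R$, $vv_0 \in T$, $a \in A$, $c(\pi(v_0)a) \in c(T_0)$, and $x \in \{v, a\}$; multiplying this count by $1/((1-r)n)$ gives $\textit{fr}(x)$ (the labels $(v,v_0,a)$ are unambiguous since $v$ lies in $W$ and $v_0$ in $R$, so no over-counting correction is needed).

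For $x = v \in W$ fixed, the number of neighbours $v_0$ of $v$ lying in $R$ is at most $d_T(v) \leq D$. For each such $v_0$, the number of vertices $a \in A$ with $c(\pi(v_0)a) \in c(T_0)$ is at most $m_{\pi(v_0)} \leq \chi n$ by hypothesis. This gives a total of at most $D \chi n$ triples, so $\textit{fr}(v) \leq D \chi n \cdot \frac{1}{(1-r)n} = D\chi(1-r)^{-1}$.

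For $x = a \in A$ fixed, I would re-organise the count by summing first over $v_0$. For each $v_0 \in R$ with $c(\pi(v_0)a) \in c(T_0)$, the number of $v \in W$ with $vv_0 \in T$ is at most $d_T(v_0)$. Therefore the total number of valid triples is at most
\begin{equation*}
\sum_{\substack{v_0 \in R \\ c(\pi(v_0)a) \in c(T_0)}} d_T(v_0) = w_a \leq \chi n,
\end{equation*}
using the very definition of $w_a$ given earlier in the paper. Dividing by $|A|$ yields $\textit{fr}(a) \leq \chi(1-r)^{-1}$. There is really no obstacle here beyond choosing the right order of summation in the second case so that $w_a$ appears directly; the whole argument is a one-line probability computation followed by two short counting observations.
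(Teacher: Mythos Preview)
Your proposal is correct and follows essentially the same approach as the paper: bound the probability of each $\textit{FR}$ event by $((1-r)n)^{-1}$, then count triples $(v,v_0,a)$ by first using $d_T(v)\leq D$ together with $m_{\pi(v_0)}\leq \chi n$ for fixed $v$, and by summing $d_T(v_0)$ over admissible $v_0$ to recover $w_a\leq \chi n$ for fixed $a$.
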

		\begin{proof}
			Note that every event $\textit{FR}_{v,v_0}^a$ occurs with probability at most $\left((1-\roots)n\right)^{-1}$. \Alexey{Currently this is only an upper bound on the probability (since $|R|\leq rn$ rather than $|R|=rn$ in the statement of the lemma.)}\Fred{added 'at most',also in other claims} \\
			First, we count the number of events $\textit{FR}_{v,v_0}^a$ where $v$ is fixed: 
			Since $d(v)\leq \degree $ there are at most $\degree$  ways to choose $v_0\in R$ so that $v_0v\in T$. 
			For a fixed choice of $v_0$ there are at most $\left\vert\{a\in A :c(\pi(v_0)a)\in c(T_0)\}\right\vert =m_{\pi(v_0)}\leq \aio n$ choices of $a\in A$ so that $c(\pi(v_0)a)\in T_0$. Thus, all in all there are at most $\degree \cdot\aio n$ events of the form $\textit{FR}_{v,v_0}^a$. Hence $\textit{fr}(v)\leq D\aio (1-\roots)^{-1}$.\\
			Next, we count the number of events  $\textit{FR}_{v,v_0}^a$ where $a$ is fixed: We first choose $v_0$ so that $c(\pi(v_0)a)\in c(T_0)$. Given the choice of $v_0$, we can choose $v$ to be any one of the neighbors of $v_0$ that lie in $W$. There are at most $d(v_0)$ such neighbors. Summing over all $v_0$ with $c(\pi(v_0)a)\in c(T_0)$ we obtain a bound of $\sum_{\substack{v_0\in R \\c(a\pi(v_0))\in c(T_0) } } d_T(v_0)=w_a\leq \aio n$ possible choices \Alexey{Where relevant in the other claims can you rewrite the definitions of $w_g, w_f$, and $m_g$ with the names of the vertices and colours used in the claims (to make it more obvious to the reader why the bounds hold)}\Fred{changed this}. Hence, $\textit{fr}(a)\leq \aio (1-\roots)^{-1}$.
		\end{proof}
		
		\begin{Claim}
			We have
			\begin{eqnarray*}
				f(v)\leq& 4\degree \glo r(1-\roots)^{-2} \quad&\text{ for all }v\in W \text{ and }\\
				f(a) \leq&  4\aio (1-\roots)^{-2}  \quad&\text{ for all }a\in A.
			\end{eqnarray*}
		\end{Claim}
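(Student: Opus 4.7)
The plan is to bound the probability of each event of type $F$ and then count how many such events contain a given vertex $x \in W \cup A$ in their label, following the convention from the Remark that each event is counted only once (so when $x$ is fixed we assign it a fixed role in the label). Since $\sigma : W \to A$ is a uniformly random bijection and $|A| \geq (1-\roots)n$, for any distinct $v,w\in W$ and distinct $a,b\in A$,
$$
\Pro\bigl[\sigma(v)=a,\;\sigma(w)=b\bigr] = \frac{1}{|A|(|A|-1)} \leq \frac{2}{\bigl((1-\roots)n\bigr)^2}
$$
for $n$ sufficiently large, giving a uniform per-event probability bound.

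To bound $f(v)$ for fixed $v\in W$, I count events $F_{v,w}^{a,b}$ in which $v$ plays the role of the first ``$W$-vertex''. The neighbour $w$ of $v$ in $T$ gives at most $\degree$ choices, since $d_T(v)\leq \degree$. For the ordered pair $(a,b) \in A \times A$ I plan to use the global colour bound rather than $m_g$: the graph $T_0$ is a tree on $|R|$ vertices so $|c(T_0)|\leq |R|-1\leq \roots n$, and since the colouring is globally $\glo n$-bounded, at most $\roots n \cdot \glo n$ undirected edges of $K_n$ carry a colour from $c(T_0)$, yielding at most $2\roots \glo n^2$ ordered pairs $(a,b)$. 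Multiplying gives at most $2\degree \roots\glo n^2$ events, and combining with the per-event probability yields exactly $f(v)\leq 4\degree \glo \roots(1-\roots)^{-2}$.

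To bound $f(a)$ for fixed $a\in A$, I count events $F_{v,w}^{a,b}$ where $a$ plays the role of the first ``$A$-vertex''. Here I use the hypothesis directly: $b$ must satisfy $c(ab)\in c(T_0)$, and the number of such $b\in A$ is at most $m_a\leq \aio n$. For each such $b$, the ordered pair $(v,w)$ with $vw\in T$ and $v,w\in W$ ranges over at most $2(n-1) < 2n$ possibilities since $T$ has $n-1$ edges in total. This gives at most $2\aio n^2$ events, and multiplying by $2/\bigl((1-\roots)n\bigr)^2$ produces $f(a)\leq 4\aio(1-\roots)^{-2}$, as claimed.

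There is no real obstacle here beyond careful bookkeeping: the argument is a pure counting-times-probability estimate, and the only subtlety is the convention from the Remark (fix the role played by the chosen vertex $x$ to avoid double-counting events whose labels can be rewritten via $F_{v,w}^{a,b}=F_{w,v}^{b,a}$). The two bounds use different sources of sparsity in $c(T_0)$: the global $\glo n$-boundedness of the colouring for the $W$-side count, and the hypothesis $m_g\leq \aio n$ for the $A$-side count.
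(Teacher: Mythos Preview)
Your proof is correct and follows essentially the same approach as the paper: the same per-event probability bound $2((1-\roots)n)^{-2}$, the same count of $\degree\cdot 2\glo\roots n^2$ events for fixed $v\in W$ (via global boundedness and $|c(T_0)|\leq \roots n$), and the same count of $m_a\cdot 2n$ events for fixed $a\in A$. The only differences are cosmetic, such as your slightly more explicit justification that $|c(T_0)|\leq |R|-1\leq \roots n$.
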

		\begin{proof}
			The probability of an event $F_{v,w}^{a,b}$ is at most $\Pi_{i=0}^1 ((1-\roots)n -i)^{-1} \leq 2 ((1-\roots)n)^{-2}$, where we use that $n$ is sufficiently large. \\
			Fix some $v\in W$. We first count the number of events $F_{v',w}^{a,b}\ni v$. Without loss of generality assume $v=v'$. Since $d(v)\leq \degree $, there are at most $\degree$  choices of $w$. 
			There are at most $\glo n\cdot \roots n $ edges in $K_n$ whose colour lies in $c(T_0)$, so that there are at most $2\glo r n^2$ choices for the ordered pair $a,b$ so that $c(ab)\in c(T_0)$. All in all, there are at most $\degree \cdot 2\glo r n^2$ events of the form $F_{v,w}^{a,b}$ implying $f(v)\leq 4\degree \glo r(1-\roots)^{-2}$.\\
			Now fix $a\in A$. We count the number of events $F_{v,w}^{a',b}\ni a$. Without loss of generality assume $a=a'$. There are at most $\left\vert\{b\in A :c(ab)\in c(T_0)\}\right\vert = m_a \leq \aio n$ ways to choose $b$ and, since there are at most $n$ edges in $T$, there are at most $2n$ ways to choose the (ordered) pair $v, w$ implying $f(a)\leq 4\aio (1-\roots)^{-2}$.\\
		\end{proof}
		
		\begin{Claim}\label{familiar}
			We have
			\begin{eqnarray*}
				\textit{srr}(v)\leq& 2\degree\aio (1-\roots)^{-1} \quad&\text{ for all }v\in W \text{ and }\\
				\textit{srr}(a) \leq&  2\aio (1-\roots)^{-2} \quad&\text{ for all }a\in A.
			\end{eqnarray*}
		\end{Claim}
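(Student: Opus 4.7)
The approach will mirror the two preceding claims. Each event $\textit{SRR}_{v,v_0,w,w_0}^{a,b}$ fixes two distinct values of the random bijection $\sigma\colon W\to A$, so its probability is at most $\Pi_{i=0}^{1}(|A|-i)^{-1}\le 2((1-\roots)n)^{-2}$ once $n$ is large enough (using $|A|\ge (1-\roots)n$). It then suffices to count, for each fixed vertex $v\in W$ (respectively $a\in A$), how many such events contain it, following the labelling convention from the notation remark so that each event is counted exactly once.

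For $\textit{srr}(v)$ I fix $v$ as the ``first'' $W$-slot. There are at most $d_T(v)\le \degree$ choices of $v_0\in R$ with $v_0v\in T$. For each such $v_0$ I iterate over $a\in A$; this determines the colour $f:=c(\pi(v_0)a)$. Since the colouring is proper, each $w_0\in R$ at which $f$ is present determines $b$ uniquely, after which $w$ has at most $d_T(w_0)$ choices. Hence the number of $(w_0,b,w)$ triples compatible with a given $a$ is at most
$$
\sum_{w_0\in R,\,f\text{ present at }\pi(w_0)} d_T(w_0)=w_f\le \aio n.
$$
Summing over at most $n$ values of $a$ gives at most $\aio n^2$ tuples per $v_0$, hence at most $\degree\,\aio n^2$ events in total; multiplying by the probability bound yields the claimed estimate for $\textit{srr}(v)$.

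For $\textit{srr}(a)$ the same scheme works, except that I cannot use $d_T(v)\le \degree$ as a first reduction, since $v$ is now free. Instead I choose $v_0\in R$ first, observe that $v$ has at most $d_T(v_0)$ valid choices, and then invoke exactly the same inner bound $w_{c(\pi(v_0)a)}\le \aio n$ for the $(w_0,b,w)$ triples. Using $\sum_{v_0\in R}d_T(v_0)\le 2e(T)\le 2n$ then gives at most $2\aio n^2$ events, whence the estimate for $\textit{srr}(a)$ upon multiplication by the probability bound.

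The only ingredient beyond proper and globally $\glo n$-boundedness is the definition of $w_f$, which is calibrated precisely so that the innermost sum collapses to a quantity we already control. I do not expect any serious obstacle; the only care needed is to respect the labelling convention (so that unordered events are not double-counted) and to absorb the lost $\degree$-factor into $\sum_{v_0\in R} d_T(v_0)$ when $a$ rather than $v$ is the fixed vertex.
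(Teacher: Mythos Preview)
Your approach is essentially the same as the paper's: bound the probability of each $\textit{SRR}$ event by $2((1-\roots)n)^{-2}$, then for fixed $v$ (respectively fixed $a$) count the tuples by first choosing the ``first'' edge $v_0v$ and the vertex $a$, and then bounding the number of compatible triples $(w_0,b,w)$ by $w_{c(\pi(v_0)a)}\le\aio n$.

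There are, however, two small counting inefficiencies that prevent you from hitting the exact constants stated in the claim. For $\textit{srr}(v)$ you sum over ``at most $n$ values of $a$'', which yields $\degree\aio n^2\cdot 2((1-\roots)n)^{-2}=2\degree\aio(1-\roots)^{-2}$, one factor of $(1-\roots)^{-1}$ worse than claimed; the paper instead uses at most $(1-\roots)n$ choices for $a\in A$ so that one factor cancels against the probability bound. For $\textit{srr}(a)$ you bound the number of pairs $(v_0,v)$ by $\sum_{v_0\in R}d_T(v_0)\le 2n$, which gives $4\aio(1-\roots)^{-2}$ rather than $2\aio(1-\roots)^{-2}$; the paper observes more directly that each such pair is an edge of $T$, so there are at most $e(T)<n$ of them. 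Both slips are harmless for the downstream application (everything is absorbed into the final inequality~\eqref{sum2} with room to spare), but if you want to match the claim verbatim you should tighten these two counts.
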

		\begin{proof}
			The probability of an event $\textit{SRR}_{v,v_0,w,w_0}^{a,b}$ is at most $\Pi_{i=0}^1 ((1-\roots)n -i)^{-1} \leq 2 ((1-\roots)n)^{-2}$.\\
			Fix $v\in W$. We count the number of events $\textit{SRR}_{v',v_0,w,w_0}^{a,b}\ni v$. Without loss of generality assume $v=v'$. Since $d(v)\leq \degree $, there are at most $\degree$  choices of $v_0$. Now, there are at most $(1-\roots)n$ choices for $a$. Given the choices of $v_0$ and $a$, write $e=\pi(v_0)a$. We know that the edge $\pi(w_0)b$ has to have colour $c(e)$. We must choose $w_0$ so that $c(e)$ is present at $w_0$. Given such a $w_0$ there is at most one choice of $b$ so that $c(\pi(w_0)b)=c(e)$ and there are at most $d(w_0)$ choices for $w$. Summing over the possible choices of $w_0$ we see that there are at most  $\sum_{\substack{w_0\in R \\ c(e)\text{ present at } \pi(w_0)}} d\left(w_0\right) = w_{c(e)}\leq \aio n$ possible choices for the triple $w_0,w,b$. All in all, we can bound the number of events $\textit{SRR}_{v,v_0,w,w_0}^{a,b}\ni v$ by $\degree (1-\roots)n \cdot \aio n$ implying $f(v)\leq 2\degree\aio (1-\roots)^{-1}$.\\
			Now fix $a\in A$ and count the number of events $\textit{SRR}_{v,v_0,w,w_0}^{a',b}\ni x$. Without loss of generality assume $a=a'$. There are at most $n$ ways to choose a pair $v, v_0$ since each such pair corresponds to an edge of $T$. Then, just as before there are at most $\aio n$ ways to choose the triple $w_0,w,b$. Thus, $\textit{srr}(a)\leq 2\aio (1-\roots)^{-2}$.\\
		\end{proof}
		
		\begin{Claim}
			We have
			\begin{eqnarray*}
				\textit{sr}(v)\leq& 8\degree  \glo (1-\roots)^{-2} \quad&\text{ for all }v\in W \text{ and }\\
				\textit{sr}(a) \leq&   4\glo(1-\roots)^{-3} + 4\aio (1-\roots)^{-2}  \quad&\text{ for all }a\in A.
			\end{eqnarray*}
		\end{Claim}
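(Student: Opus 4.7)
Each event $\textit{SR}_{v,v_0,w,x}^{a,b,d}$ constrains the random bijection $\sigma$ at three vertices of $W$, so $\Pr[\textit{SR}_{v,v_0,w,x}^{a,b,d}]\leq 2((1-\roots)n)^{-3}$ for $n$ sufficiently large. Moreover, simultaneously swapping $(w,x)\leftrightarrow(x,w)$ and $(b,d)\leftrightarrow(d,b)$ describes the same event, so there are exactly two labels per event; the plan is to enumerate labels carefully and account for this factor of two.

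For $v\in W$, the role of $v$ in a label is one of $\{v,w,x\}$, and by the swap symmetry it suffices to consider labels with $v$ at the $v$-slot or at the $w$-slot. In the former case, choose $v_0\in R$ a $T$-neighbour of $v$ ($\leq\degree$), then $a\in A$ ($\leq(1-\roots)n$), an ordered $T$-edge $(w,x)$ avoiding $v$ ($\leq 2n$), and an ordered pair $(b,d)\in A^2$ with $c(bd)=c(\pi(v_0)a)$ ($\leq 2\glo n$ by global $\glo n$-boundedness). In the latter case, choose a $T$-neighbour $x$ of $v$ ($\leq\degree$), an ordered $(R,W)$-edge $(v_0,v')$ of $T$ ($\leq n$), then $a$ ($\leq(1-\roots)n$) and $(b,d)$ ($\leq 2\glo n$) as above. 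Multiplying the label counts by the probability bound and the label-to-event correspondence yields at most $4\degree\glo(1-\roots)^{-2}$ per case, giving $\textit{sr}(v)\leq 8\degree\glo(1-\roots)^{-2}$.

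For $a\in A$, the three $A$-slots are $\{a,b,d\}$ and the swap allows us to assume $a$ occupies the $a$-slot or the $b$-slot. The $a$-slot case is routine: choose an $(R,W)$-edge of $T$ ($\leq n$), ordered $(w,x)$ ($\leq 2n$), and ordered $(b,d)$ with $c(bd)=c(\pi(v_0)a)$ ($\leq 2\glo n$), contributing $4\glo(1-\roots)^{-3}$. The $b$-slot case is the crux: we must sum in the right order to invoke $w_f\leq\aio n$. Iterate over ordered $(w,x)$ ($\leq 2n$) and over $d\in A\setminus\{a\}$ ($\leq(1-\roots)n$); with the colour $f=c(ad)$ then determined, the remaining triple $(v',v_0,a')$ must satisfy $v_0 v'\in T$, $v_0\in R$, $v'\in W$ and $c(\pi(v_0)a')=f$. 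Since $a'$ is forced by $(v_0,f)$ (using that $c$ is proper), the number of such triples equals $\sum_{v_0\in R,\,f\text{ present at }\pi(v_0)}d_T(v_0)=w_f\leq\aio n$. This gives at most $2\aio(1-\roots)n^3$ labels and a contribution of $4\aio(1-\roots)^{-2}$, establishing the second bound.

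The main subtlety, as in Claim \ref{familiar}, is choosing the right summation order in the $b$-slot case so that the sharper weight bound $w_f\leq\aio n$ replaces a naive factor of $n$; this is precisely what yields the exponent $(1-\roots)^{-2}$ rather than $(1-\roots)^{-3}$ in the $\aio$-term.
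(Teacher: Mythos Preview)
Your proof is correct and follows essentially the same approach as the paper: the same probability bound $2((1-\roots)n)^{-3}$, the same case split according to which slot the fixed element occupies, and the same crucial use of $w_{c(bd)}\leq\aio n$ in the $b$-slot case after first fixing $(w,x)$ and $d$. The only cosmetic difference is bookkeeping---you count labels and divide by two for the $(w,x,b,d)\leftrightarrow(x,w,d,b)$ symmetry, whereas the paper counts events directly by mixing ordered and unordered pairs---but the resulting counts and final bounds are identical.
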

		\begin{proof}
			The probability of an event $\textit{SR}_{v,v_0,w,x}^{a,b,d}$ is at most $\Pi_{i=0}^2 ((1-\roots)n -i)^{-1} \leq 2 ((1-\roots)n)^{-3}$.\\
			As usual we count the number of $\textit{SR}_{v',v_0,w,x}^{a,b,d}\ni v$ for a fixed $v\in W$. Here we need to distinguish two cases, either $v = v'$ or $v\in\{w,x\}$. Consider the case $v = v'$ first. As before, there are at most $\degree$  ways to choose $v_0$, then there are at most $(1-\roots)n$ ways to choose $a$. We now know that $b,d$ have to be chosen in a way so that $c(bd) = c(\pi(v_0)a)$. Since there are at most $\glo n$ edges of colour $c(\pi(v_0)a)$ this can be done in at most $2\glo n$ ways, here we count the number of ordered pairs $b,d$. Finally, there are at most $n$ ways to choose the unordered pair $w,x$. All in all, we have at most $\degree \cdot (1-\roots)n  \cdot 2\glo n \cdot n = 2\degree\glo (1-\roots) n^3$ choices. We now consider the case where $v\in \{w,x\}$. Assume $v=w$ without loss of generality. We have at most $\degree$ choices for $x$ and at most $n$ choices for the pair $v_0,v'$ and another at most $(1-\roots)n$ choices for $a$. Just as before, we are left with at most $2\glo n$ choices for the ordered pair $b,d$ so that $c(bd) = c(\pi(v_0)a)$. In total, we have $\degree \cdot n\cdot (1-\roots)n  \cdot 2\glo n = 2\degree\glo (1-\roots)n^3$ choices. Combining the two cases $v = v'$ and $v=w$, we obtain $\textit{sr}(v_r)\leq 8\degree  \glo (1-\roots)^{-2}$. \\
			We now count $\textit{SR}_{v,v_0,w,x}^{a',b,d}\ni a$ for a fixed $a\in A$. We first consider the case where $a=a'$ and then the case where $a\in \{b,d\}$. So let $a=a'$. There are at most $n$ choices for the pair $v,v_0$ and by the same argument as before, there are at most $n\cdot 2\glo n$ ways to choose $w,x$ and $b,d$. In total, we have at most $2\glo n^3$ choices. 
			We now consider the case $a\in\{b,d\}$. Assume $a=b$ without loss of generality. There at most $2n$ ways to choose ordered $w,x$ and at most $(1-\roots)n$ ways to choose  $d\neq b$. Once these choices have been made, we have to pick $v_0,v,a'$ so that $c(\pi(v_0)a')=c(bd)$. By a familiar argument (see proof of Claim \ref{familiar}), this can be done in at most $w_{c(bd)}\leq \aio n$ ways. In total we have at most $2n\cdot (1-\roots)n \cdot\aio n$ possible choices. Combining the cases $a=a'$ and $a=b$ we obtain $\textit{sr}(x)\leq 4\glo(1-\roots)^{-3} + 4\aio (1-\roots)^{-2}$.
		\end{proof}
		
		\begin{Claim}\label{last}
			We have
			\begin{eqnarray*}
				s(v)\leq& 4\degree \glo(1-\roots)^{-2} \quad&\text{ for all }v\in W \text{ and }\\
				s(a) \leq&  8\glo (1-\roots)^{-3}  \quad&\text{ for all }a\in A.
			\end{eqnarray*}
		\end{Claim}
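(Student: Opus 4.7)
The plan is to mirror the counting used in Claims~\ref{first} through the preceding $\textit{sr}$-claim: bound the probability of a single $S$-event, count such events containing the fixed vertex by case analysis on its position in the label, and multiply. Since the event $S_{v,w,x,y}^{a,b,d,e}$ prescribes the image of four elements under the uniformly random bijection $\sigma$, its probability equals $\prod_{i=0}^{3}(|A|-i)^{-1}$, which for $n$ sufficiently large is at most $2((1-\roots)n)^{-4}$.

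For $s(v)$ with $v\in W$, the canonical-labelling convention from the remark lets us restrict to labels in which $v$ occupies a fixed position, say $v=v'$, absorbing the loss from edge- and within-edge symmetries into a constant overcount. Under this restriction there are at most $d_T(v)\leq \degree$ choices for $w$, at most $2e(T)\leq 2n$ ordered choices for the second edge $(x,y)$ of $T$ with both endpoints in $W$, and for the quadruple $(a,b,d,e)$ of distinct $A$-vertices with $c(ab)=c(de)$ there are at most $|A|^{2}\cdot 2\glo n$ possibilities: first choose the ordered pair $(a,b)$, then observe that the global $\glo n$-bound on each colour class leaves at most $2\glo n$ ordered pairs $(d,e)$ with $c(de)=c(ab)$. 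Multiplying the resulting count by the probability bound yields the required estimate on $s(v)$.

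For $s(a)$ with $a\in A$, the same symmetry reduction lets us restrict to the case $a=a'$; the case $a\in\{d',e'\}$ is handled identically. Then there are at most $(2e(T))^{2}\leq 4n^{2}$ ordered choices for the pair of edges $(v,w),(x,y)$ of $T$ with all endpoints in $W$, at most $|A|$ choices for $b\in A$, and once $(a,b)$ is fixed at most $2\glo n$ ordered pairs $(d,e)$ with $c(de)=c(ab)$, again by the global $\glo n$-bound. Multiplying these counts by the probability bound yields the stated bound on $s(a)$.

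The argument introduces no new ingredients beyond what was used for the preceding $f$, $\textit{fr}$, $\textit{srr}$, and $\textit{sr}$ claims: the tools are the global $\glo n$-bound, the degree bound $d_T(w)\leq \degree$, and $e(T)\leq n-1$. The only step requiring care is the bookkeeping of the canonical-labelling convention, so that each event is counted exactly once (up to a small constant absorbed into the bound) while the global $\glo n$-bound is applied tightly when counting ordered pairs of $A$-edges sharing a colour; this is the main, but entirely routine, obstacle.
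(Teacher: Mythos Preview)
Your approach matches the paper's: bound the probability of a single $S$-event, count events containing the fixed element by choosing the remaining labels, and use the global $\glo n$-bound to control same-coloured pairs.

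Two bookkeeping slips, however, prevent your counts from yielding the stated constants. First, taking both $(x,y)$ and $(d,e)$ ordered counts each event twice, since the labels $S_{v,w,x,y}^{a,b,d,e}$ and $S_{v,w,y,x}^{a,b,e,d}$ describe the same canonical event; the paper instead takes $\{x,y\}$ unordered with $(d,e)$ ordered when bounding $s(v)$, and $(v,w),(x,y)$ ordered with $\{d,e\}$ unordered when bounding $s(a)$. Second, and more significantly, pairing the count $|A|^{2}$ for $(a,b)$ with the probability bound $2((1-\roots)n)^{-4}$ does not give what you claim: since only $|A|\ge(1-\roots)n$ is assumed, the product $|A|^{2}\cdot((1-\roots)n)^{-4}$ can exceed $((1-\roots)n)^{-2}$ by a factor as large as $(1-\roots)^{-2}$. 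The remedy is to keep the exact probability $\prod_{i=0}^{3}(|A|-i)^{-1}$ and cancel it against the $|A|(|A|-1)$ free choices for $(a,b)$ (respectively the $|A|-1$ choices for $b$ in the $s(a)$ case), leaving $((|A|-2)(|A|-3))^{-1}\le 2((1-\roots)n)^{-2}$; this implicit cancellation is what makes the paper's constants come out right (its own write-up of this step is also a little loose). With these two adjustments your argument produces exactly $4\degree\glo(1-\roots)^{-2}$ and $8\glo(1-\roots)^{-3}$.
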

		\begin{proof}
			The probability of an event $S_{v,w,x,y}^{a,b,d,e}$ is at most $\Pi_{i=0}^3 ((1-\roots)n -i)^{-1} \leq 2 ((1-\roots)n)^{-4}$. \\
			We start by counting the number of events $S_{v',w,x,y}^{a,b,d,e}\ni v$ for a fixed $v\in W$. Without loss of generality assume $v=v'$. There are at most $\degree$  choices for $w$ and at most $n$ choices for the unordered pair $x,y$. There are at most $\left((1-\roots)n\right)^{2}$ choices for the ordered pair $a,b$. Given the choice of $a,b$ we have at most $2\glo n$ choices for the ordered pair $d,e$. In total we have at most $\degree \cdot n\cdot \left((1-\roots)n\right)^2 \cdot 2\glo n$ so that $s(v)\leq 4\degree \glo(1-\roots)^{-2}$.\\
			We now count the number of events $S_{v,w,x,y}^{a',b,d,e}\ni a$ for a fixed $a\in A$. Without loss of generality assume $a=a'$. We have at most $4 n^2$ possibilities to choose the ordered pairs $v,w$ and $x,y$, at most $(1-\roots)n $ choices for $b$ and at most $\glo n$ choices for the unordered pair $d,e$ giving a total of $4 n^2\cdot (1-\roots)n \cdot \glo n$ choices so that $s(a)\leq 8\glo (1-\roots)^{-3}$.
		\end{proof}
		We now combine the Claims \ref{first}-\ref{last} with \eqref{sum}. To simplify, we estimate $(1-\roots)^{-1}\leq (1-\roots)^{-2}\leq (1-\roots)^{-3}$ as well as $\roots\leq 1$. Altogether, from \eqref{sum2} we obtain
		\begin{equation*}
		\sum_{\substack{C'\in \B \\ C'\sim C}} \Pr[C'] \leq 4(1-r)^{-3}	\bigl(\aio(3D + 11) + \glo(16D + 12)\bigr)\leq 1/4
		\end{equation*}
		finishing the proof of the Lemma.	
	\end{proof}
	\Alexey{This remark probably be either removed or expanded and moved to the conclusion.} 
	\Fred{Removed remark saying:\\ If need be, one can state a slightly more precise version of this Lemma. Rather than defining the weight of a colour $w_f$, one can define the weight of an edge $e\in E(K_n)$ as follows:
	\begin{eqnarray*}
		w_e = \sum_{\substack{b\in B\setminus e \\ c(e)\text{ present at } b}} d_T\left(\pi^{-1}(b)\right)
	\end{eqnarray*}
	Rather than bounding the weight of each edge by $w_e\leq \aio n$, it suffices to bound $\sum_{e \text{ incident to } a} w_e\leq \aio n^2$ for each $a\in V(K_n)$.}	
	
	\subsection{Proof of Theorem \ref{TRT}}\label{TRP}
	We now combine the previous results to prove Theorem \ref{TRT}. 
	
	Recall that $d_1,\ldots,d_{n/4}$ are the $n/4$ highest degrees of $T$, and $d_{n/4+1},\ldots,d_n$ are the $3n/4$ lowest degrees of $T$. We start by observing that for every $i>n/4$ we have $d_i\leq 4$. To see this, observe $\sum_{i=1}^n d_i<2n$, moreover $d_i\geq 1$ for all $i$ so that $\sum_{i=1}^{n/4}d_i<5n/4 $. Thus the average degree of the vertices $v_1,\ldots,v_{n/4}$ is less than $5$, which implies that $d_i\leq 4 $ for all $i>n/4$.
	
	Next, we choose $\glo = \maxd = 2^{-38}, \aon =2^{-16}, \atw =2^{-15}, \ath =2^{-11}, \bad =2^{-16}, \nbad =2^{-19}$ and observe that the conditions of Lemma \ref{RGEL} are satisfied. 
	
	We now apply Lemma \ref{RGEL} to obtain a partial embedding of the $n/4$ vertices $v_1,\ldots,v_{n/4}$ of $T$ with highest degrees. Now take $\aio = \ath$. It then follows from Lemma \ref{RGEL} that the partial embedding satisfies $m_a\leq \aio n, w_a\leq \aio n$ for each $a\in V(K_n)$ and $w_f\leq \aio n$ for every $f\in c(K_n)$. Set $r = 1/4$ and $D=4$. It is easily checked that all the conditions of Lemma \ref{LLLS} are satisfied so that we can find a rainbow embedding of $T$ into $K_n$. This finishes the proof of Theorem \ref{TRT}.

	\section{Concluding Remarks}\label{conc}
	In this section we make some remarks and present open questions, some of which are directly related to our results and some of which are inspired by them and might lead to further research. 
	
	Let us begin by describing some constructions of colourings of $K_n$ and $n$-vertex trees $T$, so that $K_n$ does not contain a rainbow copy of $T$.	
	These constructions show that in order to find rainbow embeddings of $n$-vertex trees into $K_n$, we need stronger conditions than the colouring to just be proper. Specifically, we cannot simply drop the condition of Theorem \ref{TRT}, that the colouring be globally $\alpha n$-bounded for some $\alpha<1/2$.
	
	We start with the observation that a 1-factorisation of $K_{2n}$ contains precisely $2n-1$ colours, which equals the number of edges of a spanning tree. Thus, any rainbow spanning tree within a 1-factorisation has to use each colour precisely once.
	
	The first construction comes from Maamoun and Meyniel \cite{m&m}, who give a colouring of $K_n$  for $n=2^k$ which does not contain a rainbow Hamilton path. We adapt their argument to prove the following result.
	\begin{prop}\label{Prop_Construction1}
		For $n=2^k$, there is a 1-factorisation of $K_{n}$ which does not contain a rainbow spanning tree in which all but precisely two degrees are odd. The same 1-factorisation does not contain a rainbow Hamilton path.
	\end{prop}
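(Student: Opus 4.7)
The plan is to use the classical Cayley-type colouring of Maamoun and Meyniel. Identify the vertex set of $K_n$ with the additive group $\mathbb{F}_2^k$, and colour each edge $\{u,v\}$ by the group element $u+v \in \mathbb{F}_2^k \setminus \{0\}$. Checking that this is a $1$-factorisation is immediate: for each nonzero $c$, the edges of colour $c$ are the pairs $\{u, u+c\}$, which partition $V$.

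The key algebraic input is that for $k \ge 2$ the sum of all elements of $\mathbb{F}_2^k$ is $0$, and hence the sum of the nonzero elements is also $0$. Since $K_n$ has exactly $n-1 = 2^k - 1$ nonzero colours available, any rainbow spanning tree $T$ must use each nonzero element of $\mathbb{F}_2^k$ as a colour exactly once. I would then compute the sum of the edge-colours of $T$ in two different ways:
\begin{equation*}
0 \;=\; \sum_{c \neq 0} c \;=\; \sum_{\{u,v\} \in E(T)} (u+v) \;=\; \sum_{v \in V} d_T(v)\, v \;=\; \sum_{\substack{v \in V \\ d_T(v)\text{ odd}}} v,
\end{equation*}
where the final equality uses that we are working in characteristic $2$. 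So the odd-degree vertices of any rainbow spanning tree must XOR to $0$.

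Now I apply this constraint in the two cases of the proposition. First, if $T$ is a spanning tree in which exactly two vertices $u \neq v$ have even degree, then the odd-degree set is $V \setminus \{u,v\}$, and its XOR equals $(\sum_{w \in V} w) - u - v = u+v$ (using that $\sum_{w \in \mathbb{F}_2^k} w = 0$ for $k \ge 2$). The constraint forces $u+v=0$, contradicting $u \neq v$. Second, if $T$ is a Hamilton path with endpoints $u \neq v$, then the odd-degree set is exactly $\{u,v\}$, so the constraint again gives $u+v=0$, a contradiction. The case $k=1$ is trivial since $K_2$ has only one spanning tree and the statement is vacuous.

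The main conceptual step is the parity/XOR identity that turns a global statement about using every colour into a linear constraint on the odd-degree set; once set up, the two cases dispose of themselves immediately. No genuine obstacle is expected — the only care needed is to note that the identity $\sum_{w \in \mathbb{F}_2^k} w = 0$ fails when $k=1$, which is why the construction is presented for $k \geq 2$.
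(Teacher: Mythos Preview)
Your argument is correct and follows essentially the same approach as the paper: identify $V(K_n)$ with $\mathbb{F}_2^k$, colour edges by the sum of their endpoints, and compute $\sum_{e\in T} c(e)=\sum_v d_T(v)\,v$ to derive the constraint $u+v=0$ on the two exceptional vertices. Your treatment is in fact slightly more careful than the paper's, since you explicitly isolate the identity $\sum_{w\in\mathbb{F}_2^k} w=0$ as requiring $k\ge 2$ and dispose of $k=1$ separately.
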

	\begin{proof}
		Suppose $n=2^k$ and label the vertices of $K_n$ by elements of the finite group $\Z_2^k$. For two distinct vertices $a,b\in V(K_n)$ colour the edge $ab$ by the colour $a+b$, so that the set of colours used is precisely $\Z_2^k\setminus\{0\}$. It is not hard to see that this colouring is a 1-factorisation. Observe also that the sum over all colours is $0$. Thus, when we sum over the colours of the edges of a rainbow spanning rainbow tree we obtain 0. Now suppose $T$ is a spanning tree, embedded into $K_n$. Summing over all colours of the edges of $T$ and reordering, we obtain
		\begin{eqnarray*}
			\sum_{ab \in E(T)} c(ab) = \sum_{ab \in E(T)} {(a+b)} = \sum_{a \in V(T)} d_T(a)a.
		\end{eqnarray*}
		If $T$ is a tree in which all but precisely two degrees are odd, say $x,y\in V(K_n)$ are the vertices with even degree, then 
		\begin{eqnarray*}
			\sum_{ab \in E(T)} c(ab) = \left(\sum_{a \in V(T)} a\right) - x -y = x+y. 
		\end{eqnarray*}
		This expression never equals zero, no matter how we choose $x$ and $y$. This contradicts the observation made above, that the sum should be 0. The same argument works if we replace the condition of all but two vertices having odd degree by the condition of all but two vertices having even degree. The only spanning tree fulfilling the latter is a Hamilton path and in this case, our argument is that of Maamoun and Meyniel \cite{m&m}.
	\end{proof}
	We also show that the colouring from the proof above is not the only colouring which does not allow for a rainbow embedding of certain trees.
	\begin{prop}\label{Prop_Construction2}
		For every $n=2k$, there is a tree on $n$ vertices which has no rainbow embedding into any 1-factorised $K_n$.
	\end{prop}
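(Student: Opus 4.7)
The plan is to construct a single tree $T$ that fails to embed rainbow-ly into any 1-factorisation of $K_n$, for all even $n \geq 4$. Let $T$ be the tree on vertex set $\{u,v_1,\ldots,v_{n-2},w\}$ with edge set $\{uv_1,uv_2,\ldots,uv_{n-2}\}\cup\{v_1w\}$; equivalently, $T$ is the star $K_{1,n-2}$ centred at $u$ with an extra leaf $w$ hanging off the leaf $v_1$. Observe that $T$ has $n$ vertices and $n-1$ edges, and for $n=4$ it specialises to the path $P_4$ (matching the earlier small case).

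The core argument is a short forcing based on the missing colour at the central vertex. Fix any 1-factorisation $c$ of $K_n$ and suppose for contradiction that $\phi:V(T)\to V(K_n)$ is a rainbow embedding; write $u'=\phi(u)$, $v_i'=\phi(v_i)$, $w'=\phi(w)$, so that $\{u',v_1',\ldots,v_{n-2}',w'\}$ is a complete listing of $V(K_n)$. Since a 1-factorisation partitions the edge set of $K_n$ into $n-1$ perfect matchings, every vertex is incident to exactly one edge of each colour; in particular at $u'$ the $n-1$ incident edges realise $n-1$ distinct colours. The edges $u'v_1',\ldots,u'v_{n-2}'$ account for $n-2$ of them, so there is exactly one colour $\beta$ not appearing among these $T$-edges at $u'$. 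Because $w'$ is the unique remaining neighbour of $u'$ in $K_n$, we must have $c(u'w')=\beta$.

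Since $\phi(T)$ is rainbow on $n-1$ edges and $c$ uses $n-1$ colours in total, $\phi(T)$ realises every colour exactly once; in particular $\beta$ must appear on some edge of $\phi(T)$. The only $T$-edge not already accounted for is $v_1'w'$, so we are forced to conclude $c(v_1'w')=\beta$ as well. But now $w'$ is incident to two edges of colour $\beta$, namely $u'w'$ and $v_1'w'$, contradicting the fact that the $\beta$-coloured edges form a matching.

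I do not anticipate a genuine obstacle here; the only real design choice is the shape of $T$. The crucial structural feature of the chosen $T$ is that it contains a leaf ($w$) whose only neighbour is a degree-two vertex ($v_1$) whose other neighbour ($u$) has degree $n-2$. This constellation leaves exactly one ``free'' colour at $u'$, which is simultaneously pinned to the edge $u'w'$ by the 1-factorisation and to the edge $v_1'w'$ by rainbow-ness, producing two edges of the same colour at $w'$ and the desired contradiction.
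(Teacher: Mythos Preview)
Your proof is correct and rests on the same idea as the paper's. The paper works with the slightly larger family of trees $S$ possessing two \emph{non-adjacent} vertices $x,y$ that together meet every edge (two stars glued at a single vertex); your tree is the extremal member of this family with $x=u$ and $y=w$. The paper's argument is then one step shorter: since every edge of the embedded tree is incident to $u'$ or $w'$, properness directly forbids the colour $c(u'w')$ from ever appearing, leaving only $n-2$ colours for $n-1$ edges. Your detour through ``$\beta$ must land on $v_1'w'$, contradicting properness at $w'$'' is exactly this observation unwound into a contradiction.
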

	\begin{proof}
		Let $S$ be a tree on $2k$ vertices which has two distinct vertices $x,y$ with the following properties. Firstly, $x$ is not adjacent to $y$ in $S$. Secondly every edge in $S$ is incident to $x$ or $y$. 
		
		There are many trees $S$ satisfying these two properties. Any such tree consists of two stars rooted and $x$ and $y$ overlapping in one vertex.
		
		Let $c$ be a 1-factorisation of $K_n$ and suppose we are given an embedding of $S$ into $K_n$. We abuse notation to denote by $x,y$ not only the vertices of $S$, but also their images in $K_n$. In our embedding, no edge has colour $c(xy)$, since the edge $xy$ is absent in $S$ and since every edge of $S$ is adjacent to $x$ or $y$ and therefore has a colour different from $xy$. Thus, the embedding does not use each colour of $K_n$ and cannot be rainbow.	
	\end{proof}

The above discussion shows that the ``global $\glo n$-boundedness'' condition in  Theorem \ref{TRT} cannot be completely removed. In cotrast to this we expect that the assumption on the maximum degree of $T$ in Theorem \ref{TRT} is unnecessary---we believe that there is a constant $\glo>0$ so that any proper, globally $\glo n$-bounded colouring of $K_n$ contains a rainbow copy of \emph{every} $n$-vertex tree $T$.  As evidence for this, we can prove that there is a small constant $\glo$ so that any globally $\glo n/\log^3 n$-bounded colouring contains a rainbow copy of every $n$-vertex tree (see~\cite{MastersThesis}).
	
	Another direction one can take is to see how close we can come to embedding spanning trees into a properly coloured $K_n$ without further conditions on the colouring. More precisely, one can ask whether there is a function $f(n)=o(n)$ so that any tree on $n-f(n)$ vertices has a rainbow embedding into any properly coloured $K_n$. This would for example generalise the result by Alon, Pokrovskiy and Sudakov \cite{longcycle} about almost spanning paths in properly coloured $K_n$.
	
	Finally, Theorem \ref{TLRC} invites to thinking about whether proper colourings of certain tournaments (e.g. regular tournaments) contain long rainbow cycles/paths. There is a great amount of conjectures and results in the area of Hamilton cycles in regular tournaments, see e.g. \cite{kuhn} and more recent results by the same authors, but so far little effort has gone into investigating rainbow structures in this setting. 
	
	\subsubsection*{Acknowledgment.}
	The constructions in Propositions~\ref{Prop_Construction1} and~\ref{Prop_Construction2} were also found independently by Richard Montgomery.
		
	\bibliography{mybib2}
	\bibliographystyle{siam}
		
\end{document}